\let\oldnewtheorem\newtheorem
\RenewDocumentCommand{\newtheorem}{s m o m O{}}{%
  \IfBooleanTF{#1}
    {\oldnewtheorem{#2}{#4}}%
    {%
      \IfNoValueTF{#3}%
        {\oldnewtheorem{#2}{#4}[#5]}%
        {%
          \newaliascnt{#2}{#3}%
          \oldnewtheorem{#2}[#2]{#4}%
          \aliascntresetthe{#2}%
        }%
    }%
}
\theoremstyle{plain}
\newtheorem{theorem}[subsubsection]{Theorem}
\Crefname{theorem}{theorem}{theorems}
\Crefname{theorem}{Theorem}{Theorems}
\newtheorem{lemma}[subsubsection]{Lemma}
\newtheorem{prop}[subsubsection]{Proposition}
\newtheorem{cor}[subsubsection]{Corollary}
\newtheorem{conj}[subsubsection]{Conjecture}
\newtheorem{introthm}{Theorem}[section]
\theoremstyle{definition}
\newtheorem{rmk}[subsubsection]{Remark}
\newtheorem{eg}[subsubsection]{Example}
\newcommand{\ZZ}{\mathbb{Z}}
\newcommand{\VV}{\mathbb{V}}
\newcommand{\PP}{\mathbb{P}}
\newcommand{\Gc}{\mathcal{G}}
\newcommand{\Rc}{\mathcal{R}}
\newcommand{\Lc}{\mathcal{L}}
\newcommand{\Hc}{\mathcal{H}}
\newcommand{\Pc}{\mathcal{P}}
\newcommand{\Kc}{\mathcal{K}}
\newcommand{\Oc}{\mathcal{O}}
\newcommand{\Os}{\mathscr{O}}
\newcommand{\Db}{\mathbb{D}}
\newcommand{\Ho}{\mathrm{H}}
\newcommand{\Hom}{\mathrm{Hom}}
\newcommand{\Lie}{\mathrm{Lie}}
\newcommand{\Pic}{\mathrm{Pic}}
\newcommand{\ad}{\mathrm{ad}}
\newcommand{\Spec}{\mathrm{Spec}}
\newcommand{\Res}{\mathrm{Res}}
\newcommand{\Bun}{\mathrm{Bun}}
\newcommand{\Gr}{\mathrm{Gr}}
\newcommand{\Aut}{\mathrm{Aut}}
\newcommand{\SL}{\mathrm{SL}}
\newcommand{\PGL}{\mathrm{PGL}}
\newcommand{\GL}{\mathrm{GL}}
\newcommand{\Gm}{\mathbb{G}_{\mathrm{m}}}
\newcommand{\Iso}{\mathcal{I}\textit{so}}
\newcommand{\g}{\mathfrak{g}}
\newcommand{\Co}{\mathring{X}}
\newcommand{\Cto}{\mathring{C}}
\newcommand{\Ct}{C}
\newcommand{\Gout}{L_{\Co}\Gc}
\newcommand{\LG}{L\Gc}
\newcommand{\LpG}{L^+\Gc}
\newcommand{\GrG}{\Gr_{\Gc}}
\newcommand{\GrGx}{\Gr_{\Gc,x}}
\newcommand{\GrGS}{\Gr_{\Gc,S}}
\newcommand{\Iht}{\widehat{I}_\tau}
\newcommand{\Ihx}{\widehat{I}_x}
\newcommand{\Fl}{\mathrm{Fl}}
\newcommand{\fac}{\mathfrak{F}}
\newcommand{\cc}{\bm{c}}
\newcommand{\cha}{\check{a}}
\newcommand{\hL}{\hat{L}}
\newcommand{\vLambda}{\vec{\Lambda}}
\DeclareMathOperator{\lcm}{lcm}
\newcommand{\Gg}{\mathfrak{G}}
\newcommand{\hGg}{\widehat{\mathfrak{G}}}
\newcommand{\gsd}{\mathrm{gsd}}
\newcommand{\Nf}{\mathfrak{N}}
\newcommand{\Mf}{\mathfrak{M}}
\newcommand\bfem[1]{\textcolor{black}{\emph{\textbf{#1}}}}
\author[Chiara Damiolini and Jiuzu Hong]{Chiara Damiolini and Jiuzu Hong \\ {\tiny{with an appendix by}} Shuo Gao}
\address{\textrm{Chiara Damiolini} \newline \indent Department of Mathematics, University of Texas at Austin, Austin, TX}
\email{chiara.damiolini@austin.utexas.edu}
\address{\textrm{Jiuzu Hong} \newline \indent Department of Mathematics, University of North Carolina at Chapel Hill, Chapell Hill, NC}
\email{jiuzu@email.unc.edu}
\address{\textrm{Shuo Gao} \newline \indent Department of Mathematics, Stony Brook University, Stony Brook, NY}
\email{shuo.gao@stonybrook.edu}
\title[Line bundles on the moduli stack of parahoric bundles]{Line bundles on the moduli stack of parahoric bundles}
\subjclass[2020]{14D20, 14C22 (primary),  20G35, 17B67, 14L15 (secondary)}
\newcites{main}{References}
\newcites{app}{References for Appendix}
\begin{document}

\begin{abstract} In this paper we investigate line bundles on $\Bun_\Gc$ the moduli stack of parahoric Bruhat--Tits bundles over a smooth projective curve. Translating this problem into one concerning twisted conformal blocks, we are able to establish criteria that detect when line bundles on an appropriate flag variety descend to $\Bun_\Gc$. Along the way we establish a conjecture of Pappas and Rapoport which describes sections of line bundles on $\Bun_\Gc$ using representation-theoretical means. We conclude the paper with examples where our methods allow us to explicitly determine $\Pic(\Bun_\Gc)$.
\end{abstract}

\maketitle

\section{Introduction} 

Parahoric Bruhat--Tits group schemes $\Gc$  and their bundles  \cite{pappas.rapoport:2010:questions,heinloth:2010:uniformization} allow one to put under a unique umbrella various phenomena. Parabolic bundles, equivariant principal bundles, and Prym varieties are some of them \cite{pappas.rapoport:2010:questions,heinloth:2010:uniformization, zelaci:2019, damiolini:2020:conformal, hongkumar:2023, damiolini.hong:2023,Pappas-Rapoport:2022}. This means that understanding the moduli stack $\Bun_\Gc$ of principal $\Gc$-bundles on a smooth and projective curve $X$ implies understanding a wide array of natural construction depending on $X$. Many questions and conjectures concerning $\Bun_\Gc$ and its Picard group were posed by Pappas and Rapoport in \cite{pappas.rapoport:2010:questions}. Since then, many questions have been settled. In this paper we aim to provide an explicit description of line bundles of $\Bun_\Gc$ and their global sections. Specifically, when $\Gc$ is generically simply-connected and simple, and $X$ is a smooth projective curve over an algebraically closed field  $k$ of characteristic zero, Pappas and Rapoport conjecture that sections of line bundles can be described using representation theory and conformal blocks. In this paper we show that indeed this holds:

\begin{introthm}[\Cref{thm:sectionTCB}]
    Global sections of dominant line bundles on $\Bun_\Gc$ are identified with twisted conformal blocks.
\end{introthm}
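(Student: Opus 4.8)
The plan is to reduce the statement, via uniformization, to a Borel--Weil computation on a twisted affine flag variety together with a comparison of group-invariants and Lie-algebra-(co)invariants. First I would invoke Heinloth's uniformization theorem: since $\Gc$ is generically simply connected, the natural morphism $\pi\colon\Fl\to\Bun_\Gc$ from the twisted affine flag variety $\Fl$ uniformizing $\Bun_\Gc$ is surjective and realizes $\Bun_\Gc$ as the quotient stack $[\Fl/\Gout]$, with $\Gout=L_{\Co}\Gc$ acting by changing the trivialization on the complement $\Co$ of the parahoric points. Descent for quasi-coherent sheaves along this presentation then gives a canonical identification $\Ho^0(\Bun_\Gc,\Lc)=\Ho^0(\Fl,\pi^*\Lc)^{\Gout}$, so everything reduces to understanding the $\Gout$-module $\Ho^0(\Fl,\pi^*\Lc)$.

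Second, I would identify that module. If $\Lc$ is not dominant, then $\pi^*\Lc$ has no nonzero sections on the ind-projective $\Fl$, so $\Ho^0(\Bun_\Gc,\Lc)=0$, in agreement with the vanishing convention for the associated conformal block; hence we may assume $\Lc$ dominant. Using our description of $\Pic(\Bun_\Gc)$, $\pi^*\Lc$ is the line bundle attached to a dominant level-and-weight datum, and the Borel--Weil theorem for twisted affine flag varieties (which, in the generality needed here, uses the representation-theoretic input of the appendix) yields a canonical isomorphism $\Ho^0(\Fl,\pi^*\Lc)\cong\VV^\vee$ of modules over the relevant central extension $\widehat{\LG}$ of the loop group, where $\VV=\VV(\Lc)$ is the integrable module associated with $\Lc$: a tensor product, over the parahoric points, of integrable highest-weight modules over the corresponding (in general twisted) affine Kac--Moody algebras, with level and highest weights read off from $\Lc$.

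Third, it remains to match $\Ho^0(\Fl,\pi^*\Lc)^{\Gout}$ with the twisted conformal block of $\Lc$, namely the space $(\VV^\vee)^{\Lie(\Gout)}$ of $\Lie(\Gout)$-invariant functionals on $\VV$ (equivalently, the dual of the coinvariants $\VV_{\Lie(\Gout)}$), where $\Lie(\Gout)=\Ho^0(\Co,\Lie(\Gc))$ is the global current algebra, acting on $\VV$ through the expansions of its sections at the parahoric points. Differentiating the $\Gout$-action gives the inclusion $\Ho^0(\Fl,\pi^*\Lc)^{\Gout}\subseteq(\VV^\vee)^{\Lie(\Gout)}$; for the reverse inclusion I would argue as in the untwisted case (Beauville--Laszlo, Kumar--Narasimhan--Ramanathan, Laszlo--Sorger), using that $\Gout$ is reduced and connected (again because $\Gc$ is generically simply connected) and that $\widehat{\LG}$ splits over the image of $\Gout$ at the relevant level, so that $\Lie(\Gout)$-invariance of a functional forces $\Gout$-invariance. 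This yields $\Ho^0(\Bun_\Gc,\Lc)\cong(\VV^\vee)^{\Lie(\Gout)}$, which is by definition the twisted conformal block of $\Lc$; finite-dimensionality of both sides then follows from the standard propagation-of-vacua and factorization properties of twisted conformal blocks.

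The main obstacle is the second step in the required generality: establishing the Borel--Weil identification for the twisted affine flag variety attached to an arbitrary parahoric group scheme $\Gc$ --- including ind-properness of $\Fl$, normality of its Schubert varieties, and the cohomology vanishing underlying Borel--Weil --- since in characteristic zero one cannot invoke Frobenius splitting and the twisting obstructs a naive reduction to the split, constant-group case; this is exactly where the appendix is needed. A secondary, essentially bookkeeping, difficulty is to pin down which level-and-weight datum a given $\Lc\in\Pic(\Bun_\Gc)$ corresponds to --- especially when several parahoric points are present --- which is handled by our computation of $\Pic(\Bun_\Gc)$.
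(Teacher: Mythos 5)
Your overall strategy---uniformization, Borel--Weil on the twisted affine flag variety, then comparison of $\Gout$-invariants with $\Lie(\Gout)$-invariants---is the same as the paper's, but two of your steps conceal genuine gaps. First, you justify reducedness and connectedness of $\Gout$ by ``$\Gc$ is generically simply connected.'' That is not enough: even in the untwisted constant-group case, integrality of $G(\Os(\Co))$ is a nontrivial theorem of Laszlo--Sorger, and for a general parahoric $\Gc$ one must first realize $\Gc|_{\Co}\cong\pi_*(G\times\Cto)^\Gamma$ for an \'etale $\Gamma$-cover $\pi\colon\Cto\to\Co$ and a group $\Gamma$ of diagram automorphisms, and then invoke the Hong--Kumar integrality theorem for $G(\Os(\Cto))^\Gamma$ (this is \Cref{lem:GoutIntegral}). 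Without that input you can neither run the Beauville--Laszlo argument converting $\Gout$-invariance into Lie-algebra invariance, nor conclude that $\Gout$ has only the trivial character (\Cref{prop:XGouttrivial}), which is what makes the descent datum and the induced action unique.

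Second, your last step silently assumes that the action of $\Ho^0(\Co,\Lie(\Gc))=\Gg(\Co)$ on $\Hc(\vLambda)$ used to define the twisted conformal block (the one coming from the residue-theorem splitting of the central extension $\hGg_S$ over $\Gg(\Co)$) coincides with the action obtained by differentiating the $\Gout$-action on $\Ho^0(\GrGS,\Lc_{\vLambda})$. These are a priori two different splittings of the same central extension, and the proof hinges on the fact that the splitting is \emph{unique}, which follows from perfectness of $\Gg(\Co)$ (\Cref{lem:simpleLie}): a second splitting would produce a nonzero Lie algebra map $\Gg(\Co)\to k$. You need to make this explicit; otherwise the two sides of your final isomorphism are invariants for possibly different actions. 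Finally, a misattribution: the Borel--Weil theorem for twisted affine flag varieties is not supplied by the appendix---it is classical (Kumar, Mathieu) and is simply cited. The appendix concerns non-vanishing of twisted conformal blocks in the generic splitting degree $6$ case, which feeds into the descent criterion of \Cref{thm:criterion}, not into this theorem.
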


To state this result precisely and to give an idea of the methods used to prove it, we need to first introduce some notation. Let $S$ be a finite and non empty collection of points of $X$ which contains all the \textit{bad points} $\Rc$ of $\Gc$, namely all $x \in X$ such that the fiber $\Gc|_x$ is non-reductive. Denote by $\Co$ the complement of $S$ in $X$. In view of the uniformization theorem \cite[Theorem 4]{heinloth:2010:uniformization}, the stack $\Bun_\Gc$ can be realized as a quotient of a product of partial affine flag varieties $\GrGS = \prod_{x \in S} \GrGx$ by the group $\Gc(\Co)$ (or more precisely by the ind-group scheme $\Gout$ whose $k$-points are $\Gc(\Co)$). In \cite[Conjecture 3.7]{pappas.rapoport:2010:questions} Pappas and Rapoport conjectured the following statement which we prove here.

\begin{introthm}[\Cref{thm:PRconj}] Let $\Lc$ be a dominant line bundle on $\Bun_\Gc$. Then the projection map $q \colon \GrGS \to \Bun_\Gc$ induces an isomorphism:
    \begin{equation} \label{eq:thmPRintro} \tag{2.2.4}
\Ho^0(\Bun_\Gc, \Lc) \cong \left[  \Ho^0(  \GrGS, q^*\Lc) \right]^{\Ho^0(\Co, \Lie(\Gc))}.
\end{equation} 
\end{introthm}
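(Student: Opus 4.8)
The plan is to use uniformization to rewrite the left-hand side as a space of $\Gout$-invariant sections on $\GrGS$, and then to show that, in characteristic zero, invariance under the ind-group $\Gout$ is detected by invariance under its Lie algebra $\Ho^0(\Co,\Lie(\Gc))$.

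First I would invoke the uniformization theorem \cite[Theorem 4]{heinloth:2010:uniformization}: since $S$ contains the bad locus $\Rc$, it identifies $\Bun_\Gc$ with the quotient stack $[\Gout\backslash\GrGS]$, under which $q$ becomes the natural atlas --- a $\Gout$-torsor, with $\Gout$ having $k$-points $\Gc(\Co)$. Descent along $q$ then makes $q^*$ an isomorphism of $\Ho^0(\Bun_\Gc,\Lc)$ onto the space of $\Gout$-invariant sections of $q^*\Lc$, i.e.\ onto $[\Ho^0(\GrGS,q^*\Lc)]^{\Gout}$ (the sections whose two pullbacks to $\Gout\times\GrGS$ along the action and the projection agree, relative to the canonical $\Gout$-linearization of $q^*\Lc$). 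Write $V:=\Ho^0(\GrGS,q^*\Lc)$. Differentiating the $\Gout$-action gives the action of $\Lie(\Gout)=\Ho^0(\Co,\Lie(\Gc))$ on $V$, so $V^{\Gout}\subseteq V^{\Ho^0(\Co,\Lie(\Gc))}$ automatically; the content of the theorem is the reverse inclusion.

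To prove it, let $v\in V$ be annihilated by $\Ho^0(\Co,\Lie(\Gc))$. For every nilpotent $\xi$ in $\Ho^0(\Co,\Lie(\Gc))$ the one-parameter subgroup $t\mapsto\exp(t\xi)$ of $\Gout$ fixes $v$, because $\tfrac{d}{dt}\big(\exp(t\xi)\cdot v\big)=\exp(t\xi)\cdot(\xi\cdot v)=0$ while $\exp(0)\cdot v=v$; hence every element of the subgroup of $\Gc(\Co)$ generated by the $k$-points of such one-parameter subgroups fixes $v$. The crucial input is that this subgroup is all of $\Gc(\Co)$: because $\Rc\subseteq S$, the restriction $\Gc|_{\Co}$ is a simply-connected simple group scheme over the smooth affine curve $\Co$, with generic fibre defined over the $C_1$ field $k(X)$, and for such a group scheme $\Gc(\Co)$ is generated by its unipotent one-parameter subgroups --- equivalently, the ind-group $\Gout$ is reduced and connected. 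Granting this, $v$ is fixed by every $k$-point of $\Gout$; since $k$ is algebraically closed and $\Gout\times\GrGS$ is reduced in characteristic zero, the required equality of pullbacks may be verified on $k$-points, so $v$ lies in $[\Ho^0(\GrGS,q^*\Lc)]^{\Gout}=\Ho^0(\Bun_\Gc,\Lc)$. This establishes the reverse inclusion, hence the isomorphism.

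The main obstacle is exactly the assertion that $\Gc(\Co)$ is generated by unipotents, equivalently that $\Gout$ is connected: for non-simply-connected groups this fails --- already $L_{\Co}\Gm=\Oc(\Co)^{\times}$ is usually disconnected --- so simple-connectedness, the affineness of $\Co$, and the $C_1$-property of $k(X)$ all have to be exploited, drawing on the structure theory of semisimple group schemes over rings and the low-degree $K$-theory of affine curves; the possibly non-split, non-constant nature of $\Gc|_{\Co}$ is what makes this step delicate, whereas the remaining steps are standard once uniformization is in hand. Finally, this theorem is what powers \Cref{thm:sectionTCB}: combining it with the (twisted) Borel--Weil description of each $\Ho^0(\GrGx,q^*\Lc)$ as the restricted dual of an integrable highest-weight module $\Hc_x$ of the appropriate twisted affine Lie algebra at the level determined by $\Lc$ --- where dominance of $\Lc$ enters --- rewrites $[\Ho^0(\GrGS,q^*\Lc)]^{\Ho^0(\Co,\Lie(\Gc))}$ as $\Hom_{\Ho^0(\Co,\Lie(\Gc))}\big(\bigotimes_{x\in S}\Hc_x,\,k\big)$, the space of twisted conformal blocks.
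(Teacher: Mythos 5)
Your first half---uniformization, the identification of $\Ho^0(\Bun_\Gc,\Lc)$ with the $\Gout$-invariant sections of $q^*\Lc$, and the easy inclusion $V^{\Gout}\subseteq V^{\Ho^0(\Co,\Lie(\Gc))}$---matches the paper. For the reverse inclusion, however, you take a different route from the paper and leave its crucial step unproven. You reduce everything to the claim that $\Gc(\Co)$ is generated by the $k$-points of unipotent one-parameter subgroups, and you propose to get this from simple-connectedness, affineness of $\Co$, and Tsen's theorem, ``drawing on the structure theory of semisimple group schemes over rings and the low-degree $K$-theory of affine curves.'' That is precisely the hard content of the theorem, and the sketch does not supply it: the $C_1$-property of $k(X)$ at best addresses the Kneser--Tits problem for the generic fibre over the \emph{field} $k(X)$, whereas you need generation by unipotents for the points of a possibly non-split, non-constant group scheme over the \emph{ring} $\Oc(\Co)$; no argument or citation is given for that, and it is not a routine fact. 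A second, independent gap: you assert that $\Gout\times\GrGS$ ``is reduced in characteristic zero'' in order to check invariance on $k$-points. Cartier's theorem does not apply to ind-group schemes, and ind-schemes of this type can fail to be reduced even over $\QQ$ (e.g.\ $\Gr_{\Gm}$); reducedness (indeed integrality) of $\Gout$ is a genuine theorem here, not a formality.

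The paper handles both issues at once through \Cref{lem:GoutIntegral}: one first realizes $\Gc|_{\Co}\cong\pi_*(G\times\Cto)^\Gamma$ for an \'etale $\Gamma$-cover $\pi\colon\Cto\to\Co$ via \cite[Proposition 6.1.5]{damiolini.hong:2023}, so that $\Gout(R)=G(\Cto\times_{\Spec(k)}\Spec(R))^\Gamma$, and then invokes \cite[Theorem 9.5, Corollary 11.5]{hongkumar:2023} to conclude that $\Gout$ is an \emph{integral} ind-group scheme. With integrality of both $\Gout$ and $\GrGS$ in hand, the passage from Lie-algebra annihilation to group invariance is \cite[Proposition 7.4]{beauville.laszlo:1994:conformal}, which works by a formal exponential argument near the identity and avoids any global generation-by-unipotents statement. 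To repair your proof you would either need to supply the generation result for twisted group schemes over affine curves (which I do not believe is in the literature in this generality), or replace that step by the integrality lemma plus the Beauville--Laszlo criterion, as the paper does.
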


One of the main tools used to prove our results, is an explicit realization of the restriction of $\Gc$ to $\Co$ in terms of Galois covers. In fact, in \cite[Proposition 6.1.5]{damiolini.hong:2023}, we prove that 
\begin{equation}\label{eq:intro} \tag{$\star$}
    \Gc|_{\Co} \cong \pi_* (G \times \Cto)^\Gamma 
\end{equation} for a finite group $\Gamma$ of diagram automorphisms of $G$ and an étale $\Gamma$-cover $\pi \colon \Cto \to \Co$. (Here $G$ is the simple and simply connected group scheme over $k$ with the same root datum of any geometric fiber of $\Gc$ over $\Co$.) In \Cref{sec:uniqueness} we further comment on the {uniqueness} of the pair $(\Gamma, \pi)$. In particular, we show that the order of $\Gamma$ is an invariant of $\Gc$ which we call the \textit{generic splitting degree} and denote $\gsd(\Gc)$. Note that $\Gc$ is a generically split parahoric Bruhat--Tits group, if and only if  $\Gc|_{\Co}=G \times \Co$ (i.e. $\Gamma$ and $\pi$ are trivial)

Having in hand \eqref{eq:intro} we may apply \cite{hongkumar:2023} to show that $\Gout$ is an \textit{integral} ind-scheme (see \Cref{lem:GoutIntegral}). Not only this suffices to show \Cref{thm:PRconj}, but it also implies that both $\Gout$ and its Lie algebra act on $\Ho^0(\GrGS, q^*\Lc)$ in a \textit{unique} way. Therefore the Picard group of $\Bun_\Gc$ naturally injects, via $q^*$, into the Picard group of $\GrGS$. Said it more geometrically, there is at most one way to descend a line bundle from $\GrGS$ to $\Bun_\Gc$.

The fact that $\Ho^0(\Co, \Lie(\Gc))$ acts in a unique way on $\Ho^0(  \GrGS, q^*\Lc)$ is also the key to show that $\Ho^0(\Bun_\Gc, \Lc)$ can be identified with the space of \textit{twisted conformal blocks}, introduced in \cite{damiolini:2020:conformal} and further generalized and studied in \cite{hongkumar:2023, deshpande.mukhopadhyay:2019, hongkumar:2024}. We first of all note that every line bundle on $\GrGS$ is determined by a collection of weights $\vLambda$. The $\Gamma$ cover $\pi \colon \Cto \to \Co$ used to show \eqref{eq:intro} extends to a (possibly ramified) $\Gamma$ cover $\overline{\pi} \colon \Ct \to X$. We therefore define the parahoric group $\Gc_{\pi} \coloneqq \overline{\pi}_*(G \times C)^\Gamma$ and denote by $\Gg$ its Lie algebra. Given a collection of integrable representations $\Hc(\vLambda)$---where the weights $\vLambda$ determine a unique line bundle $\Lc_{\vLambda}$ on $\GrGS$---one associates the space of \textit{twisted conformal blocks} $\VV^\dagger(\Gg; \vLambda)_{C \to X;S}$ consisting of elements of $\Hc(\vLambda)^*$ which are invariant under the action of $\Gg(\Co)$, in formulas
\begin{equation} \label{eq:CBintro} \tag{CB}
\VV^\dagger(\Gg; \vLambda)_{C \to X;S} \coloneqq [\Hc(\vLambda)^*]^{\Gg(\Co)}.
\end{equation}
We may therefore compare \eqref{eq:thmPRintro} and \eqref{eq:CBintro} and we show that, if $\Lc_{\vLambda}=q^*\Lc$ for a line bundle $\Lc$ on $\Bun_\Gc$, then there is a natural isomorphism
\[  \Ho^0(\Bun_\Gc, \Lc) \cong \VV^\dagger(\Gg;\vLambda)_{C \to X; S}, 
\] proving \Cref{thm:sectionTCB}.

Twisted conformal blocks can be used to detect whether line bundles on $\GrGS$ descend to $\Bun_\Gc$, generalizing a method developed in \cite{sorger:99:moduli}. Using \cite[Proposition 4.1]{zhu:2014:coherence} we show that the image of $\Pic(\Bun_\Gc)$ lies in an explicitly determined subgroup $\Pic^\Delta(\GrGS) \subseteq \Pic(\GrGS)$ (see \Cref{cor:PicDeltainPicBunG}). Using this notation, we can state the following sufficient criterion:

\begin{introthm}[\Cref{thm:criterion}] Let $\Lc_{\vLambda} \in  \Pic^\Delta(\GrGS)$ be a dominant line bundle. If the space of twisted conformal blocks $\VV^\dagger(\Gg; \vLambda)_{C\to X;S}$ is non zero, then $\Lc_{\vLambda}$ descends to a line bundle on $\Bun_\Gc$. \end{introthm}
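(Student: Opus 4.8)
The plan is to adapt the descent method of \cite{sorger:99:moduli}: reinterpret the descent of $\Lc_\vLambda$ along $q$ as the splitting of a canonical $\Gm$-central extension of $\Gout$, and then produce such a splitting out of a nonzero twisted conformal block. By the uniformization theorem $\Bun_\Gc = [\Gout\backslash\GrGS]$, and we have already seen --- using the integrality of $\Gout$ together with \cite[Proposition 4.1]{zhu:2014:coherence} for the inclusion $\Pic^\Delta(\GrGS)\subseteq\Pic(\GrGS)$ --- that $q^*\colon\Pic(\Bun_\Gc)\hookrightarrow\Pic(\GrGS)$ is injective with image contained in $\Pic^\Delta(\GrGS)$. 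Hence $\Lc_\vLambda$ descends to $\Bun_\Gc$ exactly when it admits a $\Gout$-equivariant structure. Since $\Gout$ is connected its class $[\Lc_\vLambda]$ is automatically $\Gout$-fixed, so the group $\widehat{\Gout}$ of automorphisms of the total space of $\Lc_\vLambda$ lying over the $\Gout$-action on $\GrGS$ fits into a central extension $1\to\Gm\to\widehat{\Gout}\to\Gout\to 1$, and $\Lc_\vLambda$ descends if and only if this sequence splits.

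To exploit the hypothesis on conformal blocks I would pass to the representation-theoretic model. As $\vLambda$ is dominant, the Borel--Weil-type description of sections on partial affine flag varieties identifies $\Ho^0(\GrGS,\Lc_\vLambda)$ with the full dual $\Hc(\vLambda)^*$ of the integrable module, and $\widehat{\Gout}$ acts on this space with its central $\Gm$ acting by rescaling of sections, i.e. with weight one. Upon differentiating, $\Lie(\widehat{\Gout})$ is a central extension of $\Ho^0(\Co,\Lie(\Gc)) = \Lie(\Gout)$ by $k$, and the hypothesis $\Lc_\vLambda\in\Pic^\Delta(\GrGS)$ is precisely what balances the local central extensions attached to $\Lc_\vLambda$ at the points of $S$, so that the residue theorem yields a canonical Lie-algebra splitting $\Ho^0(\Co,\Lie(\Gc))\hookrightarrow\Lie(\widehat{\Gout})$. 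With respect to this splitting, the resulting action of $\Ho^0(\Co,\Lie(\Gc))$ on $\Hc(\vLambda)^*$ is exactly the one whose invariants are, by definition, the twisted conformal blocks $\VV^\dagger(\Gg;\vLambda)_{C\to X;S}$.

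Now suppose $\VV^\dagger(\Gg;\vLambda)_{C\to X;S}\neq 0$ and fix a nonzero element $\phi$ of it. I would examine the stabilizer in $\widehat{\Gout}$ of the line $k\phi$. Its Lie algebra contains the residue splitting of $\Ho^0(\Co,\Lie(\Gc))$, which annihilates $\phi$ since $\phi$ is a conformal block, and the central line $k$, which rescales $\phi$; hence it is all of $\Lie(\widehat{\Gout})$. Because $\Gout$ is integral, $\widehat{\Gout}$ is a connected ind-group, so a closed sub-ind-group with full Lie algebra must be the whole group, and therefore $\widehat{\Gout}$ preserves $k\phi$, acting on it through a character $\chi\colon\widehat{\Gout}\to\Gm$. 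Since the central $\Gm$ acts on $\phi$ with weight one, $\chi$ restricts to the identity on the central $\Gm$; consequently $\ker(\chi)$ meets the center trivially and still surjects onto $\Gout$ --- a lift $\tilde g$ of $g\in\Gout$ may be replaced by $\tilde g\,\chi(\tilde g)^{-1}$ --- so $\ker(\chi)\xrightarrow{\ \sim\ }\Gout$ splits the central extension. Hence $\Lc_\vLambda$ descends to $\Bun_\Gc$.

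The step I expect to be the main obstacle is the passage from the Lie-algebra datum --- an $\Ho^0(\Co,\Lie(\Gc))$-invariant functional --- to a genuine group-level splitting, i.e. the claim that a closed sub-ind-group of $\widehat{\Gout}$ with full Lie algebra equals $\widehat{\Gout}$. This is exactly where the integrality of $\Gout$, obtained from \eqref{eq:intro} and \cite{hongkumar:2023} and recorded in \Cref{lem:GoutIntegral}, together with $\car(k)=0$, enters in an essential way: without the connectedness and reducedness of $\Gout$ there could be a residual discrete obstruction to descent that is invisible to conformal blocks. A secondary technical point to handle with care is the compatibility of the $\widehat{\Gout}$-action on the pro-finite-dimensional space $\Ho^0(\GrGS,\Lc_\vLambda)$ with the ind-structures on $\widehat{\Gout}$ and $\GrGS$, which is what makes the stabilizer of $k\phi$ a closed sub-ind-group with the expected tangent space; this is standard in the affine flag variety setting and I would cite rather than reprove it.
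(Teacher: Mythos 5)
Your proposal is correct and follows essentially the same route as the paper: reduce descent to splitting the $\Gm$-central extension of $\Gout$ acting on $\Lc_{\vLambda}$ (the paper builds it by pulling back $\GL(\Hc(\vLambda))\to\PGL(\Hc(\vLambda))$ along the integrated projective representation, which agrees with your Mumford-group construction), identify the residue-split Lie-algebra invariants with $\VV^\dagger(\Gg;\vLambda)_{C\to X;S}$, and conclude by Sorger's stabilizer-of-a-line argument using the integrality of $\Gout$. The only difference is that you spell out the argument of \cite[Proposition 3.3]{sorger:99:moduli} rather than citing it, and you correctly flag the one point (Lie-algebra invariance implying group invariance for an integral ind-group) where integrality is essential.
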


We can combine this result with the description of $\Pic(\Bun_\Gc)$ given in \cite{heinloth:2010:uniformization}. In fact,  \cite[Theorem 3]{heinloth:2010:uniformization} provides the exact sequence
\begin{equation*}
\xymatrix{
0 \ar[r] & \prod_{x \in \Rc} X^*(\Gc_x)   \ar[r] & \Pic(\Bun_\Gc)  \ar[r]^-\cc & \cc_\Gc\ZZ \ar[r] & 0}\end{equation*}
where $X^*(\Gc|_x)$ is the group of characters of the fiber of $\Gc$ over $x$ and the map $\cc$ is the \textit{central charge}. The obstacle in understanding $\Pic(\Bun_\Gc)$ lies in the computation of the positive integer $\cc_\Gc$ that makes $\cc$ surjective. In view of \Cref{cor:PicDeltainPicBunG} we can show that $\cc_\Gc$ is always a multiple of an explicit integer $\cc_\Delta$ and we conjecture that $\cc_\Gc=\cc_\Delta$. Using \Cref{thm:criterion} we show that this is indeed the case in a number of cases, including the following situation:

\begin{introthm}[Theorems \ref{thm:Iwa} and \ref{appthm:Iwa}] \label{thm:D}
Let $\Gc$ be a parahoric Bruhat--Tits group over $X$. If, for every $x \in \Rc$ the group $\Gc(\Db_x)$ is an Iwahori subgroup of $\Gc(\Db_x^\times)$, then $\cc_\Gc=1$.  
\end{introthm}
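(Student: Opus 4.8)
The plan is to deduce the statement from the descent criterion \Cref{thm:criterion}. By \Cref{cor:PicDeltainPicBunG} the map $q^*$ carries $\Pic(\Bun_\Gc)$ into $\Pic^\Delta(\GrGS)$ and $\cc_\Gc$ is a multiple of $\cc_\Delta$, so it is enough to produce a single \emph{dominant} line bundle $\Lc_{\vLambda}\in\Pic^\Delta(\GrGS)$ of central charge one whose space of twisted conformal blocks $\VV^\dagger(\Gg;\vLambda)_{C\to X;S}$ is nonzero: then \Cref{thm:criterion} shows that $\Lc_{\vLambda}$ descends to $\Bun_\Gc$, so $1\in\cc_\Gc\ZZ$ and $\cc_\Gc=1$ (and a posteriori $\cc_\Delta=1$, confirming the conjectured equality $\cc_\Gc=\cc_\Delta$ in this case).

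The construction of $\vLambda$ is where the Iwahori hypothesis is used. For each $x\in\Rc$ the assumption that $\Gc(\Db_x)$ is an Iwahori subgroup means that $\GrGx$ is the \emph{full} affine flag variety of the (possibly twisted) affine Kac--Moody algebra $\hat{\g}_x$, so that the corresponding factor of $\Pic(\GrGS)$ is spanned by the classes of \emph{all} the affine fundamental weights $\Lambda^x_0,\dots,\Lambda^x_{r_x}$, each of which is dominant and has level equal to the corresponding comark. Since every affine type possesses a node of comark one---one may take $\Lambda^x_0$ in the untwisted case and in all twisted types except $A^{(2)}_{2n}$, where one uses instead a fundamental weight at an end node---we may choose at each $x\in\Rc$ a dominant weight $\Lambda^x_\star$ of level one, and complete $\vLambda$ by $\Lambda_0$ at the remaining points of $S$. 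One checks that the resulting $\Lc_{\vLambda}$, dominant of central charge one, lies in $\Pic^\Delta(\GrGS)$: in the Iwahori case the compatibility conditions defining $\Pic^\Delta$ do not obstruct the minimal level, which is also what forces $\cc_\Delta=1$. (For a non-minimal parahoric only a proper subset of the $\Lambda^x_i$ survives, and the greatest common divisor of the levels then available governs $\cc_\Delta$ and may exceed one, so the hypothesis is genuinely needed.)

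It remains to prove $\VV^\dagger(\Gg;\vLambda)_{C\to X;S}\neq 0$, which I expect to be the heart of the matter. When $\Gc$ is generically split---so $\Gamma$ is trivial, each $\hat{\g}_x$ is untwisted, and one may take $\Lambda^x_\star=\Lambda_0$, i.e.\ trivial finite part everywhere---an Iwahori subgroup is contained in a hyperspecial one, so $\Gc$ admits a morphism of Bruhat--Tits group schemes to the constant model $G\times X$; hence $\Lc_{\vLambda}=q^*\Lc$, where $\Lc$ denotes the pullback, along $\Bun_\Gc\to\Bun_G$, of the level-one generator $\Theta$ of $\Pic(\Bun_G)$. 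In particular $\Lc$ is already a central-charge-one line bundle on $\Bun_\Gc$; moreover $\Ho^0(\Bun_G,\Theta)\neq 0$ by the classical theory of non-abelian theta functions \cite{sorger:99:moduli}, pulling back this gives $\Ho^0(\Bun_\Gc,\Lc)\neq 0$, which by \Cref{thm:sectionTCB} is $\VV^\dagger(\Gg;\vLambda)_{C\to X;S}$, so \Cref{thm:criterion} applies as well. For $\Gc$ genuinely twisted---the remaining cases being treated in the appendix---one instead studies $\VV^\dagger(\Gg;\vLambda)_{C\to X;S}=[\Hc(\vLambda)^*]^{\Gg(\Co)}$ directly: by factorization and propagation of vacua for twisted conformal blocks \cite{damiolini:2020:conformal,hongkumar:2023}, non-vanishing reduces---after degenerating $X$ to a stable nodal curve---to genus-zero blocks, which at level one are computed explicitly, being nonzero for vacuum insertions and, for the remaining twisted types (notably $A^{(2)}_{2n}$), by a separate choice of level-one weights and a direct computation.

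The main obstacle is this last non-vanishing. Once $\vLambda$ is fixed, dominance, membership in $\Pic^\Delta$, and the computation of the central charge are formal, whereas twisted conformal blocks genuinely can vanish, so one must exhibit an honest nonzero $\Gg(\Co)$-invariant functional rather than merely bound a dimension. The degeneration argument behaves well for vacuum-type insertions; the real work---and the reason the twisted cases are isolated in the appendix---is in making it uniform across all twisted affine types and all genera.
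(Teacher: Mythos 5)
Your overall architecture coincides with the paper's: reduce to \Cref{thm:criterion}, use the Iwahori hypothesis to put the central-charge-one weight $\Lambda_o$ (the node with $\cha_o=1$, which exists in every affine type, including $A^{(2)}_{2n}$ --- your hedge about end nodes is unnecessary) at each point of $S$, and then show $\VV^\dagger(\Gg;\vLambda_o)_{C\to X;S}\neq 0$. For the generically split case you take a genuinely different route --- pulling back a nonzero theta function along $\Bun_\Gc\to\Bun_G$ rather than degenerating $X$ and factorizing down to $\VV^\dagger(\g;\Lambda_o)_{\PP^1;0}$ --- and that argument is fine (the forgetful map is a flag-bundle fibration, so pullback of sections is injective), though it only covers $\gsd(\Gc)=1$.

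The genuine gap is that the twisted cases $\gsd(\Gc)\in\{2,3,6\}$, which are the actual content of the theorem, are asserted rather than proved. Saying that ``factorization and propagation of vacua reduce non-vanishing to genus-zero blocks, which at level one are computed explicitly'' leaves out everything that makes the argument work. Concretely, you are missing: (i) the justification that degenerating the $\Gamma$-cover preserves $\dim\VV^\dagger$ --- for cyclic $\Gamma$ this rests on connectedness of the relevant Hurwitz stack (\cite[\S4]{hongkumar:2024}), and for $\Gamma=\mathfrak{S}_3$ connectedness is not known, which is precisely why the paper's appendix has to invoke constancy of the rank across components (\cite[Remark 1.4]{deshpande.mukhopadhyay:2019}) and build the degenerations by hand; (ii) the combinatorial bookkeeping that makes the reduction to base cases possible --- $|\Rc_2|$ even by Riemann--Hurwitz for $\gsd=2$, the partition of $\Rc_3$ by monodromy with $|\Rc_3^+|\equiv|\Rc_3^-|\pmod 3$ for $\gsd=3$, and the four $\mathfrak{S}_3$ base cases for $\gsd=6$; and (iii) the actual non-vanishing of the base-case blocks, e.g.\ that $\VV^\dagger(\Gg;\Lambda_o,\Lambda_o)_{\PP^1\to\PP^1;0,\infty}$ is one-dimensional, that the three-point block on the elliptic $3{:}1$ cover of $\PP^1$ is two-dimensional, and the $S$-matrix evaluations ($S_{0,0}$, $S^{(12)}_{0,0}$, $S^{(123)}_{0,0}$) needed for the $\mathfrak{S}_3$ cases. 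Twisted conformal blocks at level one genuinely can vanish for a bad choice of insertions, so without these computations the central claim of your proof is unestablished. You correctly identify this as ``the heart of the matter,'' but identifying the heart is not the same as supplying it.
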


In the first version of this paper, we proved Theorem D assuming that the generic splitting degree of $\Gc$ was different from $6$, which recovers \Cref{thm:Iwa}. In this new updated version, we added Appendix~\ref{appendix} by Shuo Gao in which \Cref{appthm:Iwa} covers the case $\gsd(\Gc)=6$. One of the strengths of \Cref{thm:criterion} is the fact that one does not need to explicitly compute spaces of twisted conformal blocks $\VV^\dagger$, but only show that they are non-zero. One of the crucial properties of twisted conformal blocks is that they decompose as a direct sum of \textit{simpler} spaces $\VV^\dagger_i$, and therefore, to apply \Cref{thm:criterion}, it is enough to show non-vanishing for one of such spaces. Following these ideas, in \Cref{subsec:nonvanishing} we show \Cref{thm:Iwa} and, in the appendix, Shuo Gao proves \Cref{appthm:Iwa}. We conclude \Cref{subsec:nonvanishing} with  \Cref{prop:lcmai} and with further examples where $\cc_\Gc$ is computable (\Cref{eg:covering}).

\subsection*{Acknowledgment} C.\,Damiolini warmly thanks the organizers and the participants of the Bootcamp for the 2025 Algebraic Geometry Summer Research Institute. The appendix was made possible by the Bootcamp for the 2025 Algebraic Geometry Summer Research Institute, and S.\,Gao thanks the first author for leading the group session thereof and thanks Davide Gori for some preliminary discussion during the same group session. 

C.\,Damiolini is partially supported by the NSF grant DMS--2401420. J.\,Hong is partially supported by the Simons grant MPS-TSM-00007468. 
S.\,Gao is partially supported by the generosity of Simons Foundation International.

\section{Background and Pappas and Rapoport conjecture} \label{sec:PRconj}

Throughout we let $X$ be a smooth and projective curve over $k$. The field $k$ is assumed to be algebraically closed and with characteristic zero. If $x$ is a point of $X$, we denote by $\Db_x = \Spec(\Oc_x)$ the formal disk around $x$ and $\Db_x^\times= \Spec(\Kc_x)$ the punctured disk around $x$. By choosing a coordinate $t$ around $x$, we have that $\Oc_x \cong k[\![t]\!]$ and $\Kc_x \cong k(\!(t)\!)$. Throughout the paper, by a \bfem{parahoric Bruhat--Tits group scheme} over a curve $X$ we mean an affine and smooth group scheme $\Gc$ over $X$ such that
\begin{enumerate}[label=(\arabic*)]
    \item \label{it:gensimple} $\Gc$ is a generically simply-connected and simple algebraic group;
    \item $\Gc|_x$ is connected for every $x \in X$;
    \item \label{it:parahoric}  $\Gc(\Oc_x)$ is a \emph{parahoric subgroup} of $\Gc(\Kc_x)$ for every $x \in X$, as in \cite{bruhat.tits:1984:II}.
\end{enumerate}
We note that condition \ref{it:parahoric} is automatically verified when $\Gc|_x$ is reductive (and actually simple, by \ref{it:gensimple}) and we refer to \cite{heinloth:2017:stability,damiolini.hong:2023, Pappas-Rapoport:2022} for a more detailed  discussion on the parahoric condition.  Furthermore, by \ref{it:gensimple} there are only finitely many points of $X$ when this happens. We call such points the \bfem{bad points} of $\Gc$ and denote their set by $\Rc$. 
We further say that $x \in X$ is \bfem{split} if $\Gc|_{\Kc_x}$ is a split reductive group over ${\Kc_x}$, otherwise it will be called non-split. In fact, in our setting $\Gc|_{\Kc_x}$ is always quasi-split. The \bfem{splitting degree} of $x \in \Rc$ (relative to $\Gc$) is
\[ \deg_\Gc(x) \coloneqq \min \{ \deg(\Lc_x : \Kc_x) \text{ such that } \Gc_{\Lc_x} \text{ is split } \} \subseteq \{1,2,3\}.\] Therefore $x \in \Rc$ is split if and only if $\deg_\Gc(x)=1$. We extend this notion to every point of $X$ by declaring any point $x \not\in \Rc$ to have splitting degree $0$.
We note that in \cite{heinloth:2010:uniformization}, condition \ref{it:gensimple} is replaced by the weaker hypothesis that $\Gc$ is generically semisimple, while in \cite{balaji.seshadri:2015:moduli} the authors assume that $\Gc$ is generically semisimple and split, so that necessarily all the bad points are split.  Although we are mainly interested in parahoric Bruhat--Tits group schemes which have bad points that are non-split, we will not make this assumption.

\subsection{The stack of \texorpdfstring{$\Gc$}{G}-bundles} We denote the moduli stack of $\Gc$-bundles over $X$ by $\Bun_{\Gc}$.
This is a smooth algebraic stack over $k$ and the uniformization theorem (consequence of \cite[Theorem 4]{heinloth:2010:uniformization}) describes it as an appropriate quotient stack. To understand this characterization, we need to set up some notation. Let $S$ be a non empty collection of points of $X$ and $\Co \coloneqq X \setminus S$. We will later assume that $S$ contains the set $\Rc$ of all the bad points, but for the moment this assumption will not be needed. The ind-group scheme $\Gout$ is defined by the assignment
\[  \Gout(R) \coloneqq \Gc\left(\Co \times_{\Spec(k)} \Spec(R)\right)\] 
for every $k$-algebra $R$. For every $x\in X$, we define the ind-group scheme $\GrGx$, called the \bfem{affine Grassmannian} of $\Gc$ which parametrizes $\Gc$-bundles over $X$ which are trivial outside $x$. When $x \in \Rc$, then $\GrGx$ is a partial affine flag variety. Alternatively, one defines the loop groups $\LG_x$ and $\LpG_x$ by the assignments
\[ \LG_x(R) \coloneqq \Gc(R(\!(t)\!)) \qquad \text{ and } \qquad  \LpG_x(R) \coloneqq \Gc(R[\![t]\!]),
\] where $t$ is a local coordinate of $X$ at $x$. The affine Grassmannian $\GrGx$ can therefore be identified with the quotient $\LG/\LpG$.

From this description, we have that there is a natural map $\Gout \to \LG$ and thus $\Gout$ naturally acts on $\GrGS \coloneqq \prod_{x \in S} \GrGx$ and the Uniformization Theorem \cite[Theorem 4]{heinloth:2010:uniformization} implies that
\begin{equation}
    \label{eq:Heinloth}
    \Bun_\Gc \cong \left[ \Gout \setminus \GrGS \right],
\end{equation}
where the isomorphism is induced by the map
\begin{equation} \label{eq:unif} q_S \colon \GrGS \longrightarrow \Bun_\Gc.
\end{equation}

In \cite[Conjecture 3.7]{pappas.rapoport:2010:questions}, Pappas and Rapoport suggest that the following statement holds.

\begin{conj}\label{conj:PR} Let $\Lc$ be a dominant line bundle on $\Bun_\Gc$. Assume that $\Rc \subseteq S \neq \emptyset$. Then there is a canonical isomorphism
\begin{equation}\label{eq:PRconj}
\Ho^0(\Bun_\Gc, \Lc) \cong \left[  \bigotimes_{x \in S} \Ho^0(\GrGx, q_x^*\Lc) \right]^{\Ho^0(\Co, \Lie(\Gc))},
\end{equation}
where the notation $[X]^Y$ used on the right hand side denotes the elements of $X$ which are annihilated by $Y$. \end{conj}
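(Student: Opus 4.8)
The plan is to deduce Conjecture~\ref{conj:PR} from two inputs: (1) the uniformization isomorphism $\Bun_\Gc \cong [\Gout \setminus \GrGS]$ of \eqref{eq:Heinloth}, which lets us compute $\Ho^0(\Bun_\Gc,\Lc)$ as $\Gout$-invariants of $\Ho^0(\GrGS, q_S^*\Lc)$; and (2) the \emph{integrality} of the ind-scheme $\Gout$, which is the bridge allowing one to replace the group-level invariance by Lie-algebra-level invariance. The first reduction is formal: since $q_S \colon \GrGS \to \Bun_\Gc$ is a $\Gout$-torsor (up to the subtlety that $\Gout$ is only an ind-group scheme, not of finite type), descent along $q_S$ identifies $\Ho^0(\Bun_\Gc, \Lc)$ with the $\Gout$-equivariant sections of $q_S^*\Lc$ on $\GrGS$, i.e.\ with $\Ho^0(\GrGS, q_S^*\Lc)^{\Gout}$. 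Here one must be slightly careful that there is a \emph{unique} $\Gout$-equivariant structure on $q_S^*\Lc$; this is where integrality of $\Gout$ enters, since a connected reduced ind-group scheme admits at most one lift of a line bundle, and this is exactly the statement recorded in the introduction (via \cite{hongkumar:2023} applied to \eqref{eq:intro}).

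Next I would perform the Künneth decomposition $\Ho^0(\GrGS, q_S^*\Lc) \cong \bigotimes_{x \in S} \Ho^0(\GrGx, q_x^*\Lc)$, which holds because $\GrGS = \prod_{x\in S}\GrGx$, each $\GrGx$ is an ind-projective ind-scheme, and $q_S^*\Lc$ is an external tensor product of the $q_x^*\Lc$ (as $\Lc$ restricted to each factor is ample/dominant and the Grassmannians are connected, each $\Ho^0(\GrGx, q_x^*\Lc)$ is finite-dimensional, so the tensor product is literally the global sections of the product). Having done this, Conjecture~\ref{conj:PR} becomes the assertion that taking $\Gout$-invariants of the right-hand side agrees with taking $\Ho^0(\Co, \Lie(\Gc))$-invariants, where $\Ho^0(\Co, \Lie(\Gc)) = \Lie(\Gout)$ is the Lie algebra of the ind-group scheme $\Gout$.

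The heart of the matter is therefore the equality
\[
\Big[\bigotimes_{x \in S} \Ho^0(\GrGx, q_x^*\Lc)\Big]^{\Gout} = \Big[\bigotimes_{x \in S} \Ho^0(\GrGx, q_x^*\Lc)\Big]^{\Lie(\Gout)}.
\]
The inclusion $\subseteq$ is automatic (differentiating a group action). For the reverse inclusion one uses that $\Gout$ acts on the (ind-)space $V \coloneqq \bigotimes_x \Ho^0(\GrGx, q_x^*\Lc)$ algebraically, that $\Gout$ is \emph{integral} (reduced and irreducible as an ind-scheme, by \Cref{lem:GoutIntegral} and \eqref{eq:intro}), and that a vector $v$ killed by $\Lie(\Gout)$ has orbit map $\Gout \to V$ with zero differential everywhere; since $\Gout$ is reduced and connected, in characteristic zero this forces the orbit map to be constant, so $v$ is $\Gout$-invariant. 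Concretely one argues finite-stratum by finite-stratum: $V$ is a union of finite-dimensional $\Gout$-stable subspaces (each $\Ho^0(\GrGx,q_x^*\Lc)$ is finite-dimensional and $\Gout$-stable since the $\Gout$-action preserves the line bundle and its sections), the action on each factors through an algebraic quotient group $\overline{G}$ of finite type, and the key point is that $\overline{G}$ is still connected because $\Gout$ is irreducible; hence the Lie-algebra-invariants inside the finite-dimensional space coincide with the $\overline{G}$-invariants, which coincide with the $\Gout$-invariants. Combining this with the two reductions above yields the isomorphism \eqref{eq:PRconj}, and its canonicity follows from the uniqueness of the equivariant structure.

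\textbf{The main obstacle} is establishing that one may pass from group-invariants to Lie-algebra-invariants, which rests entirely on the irreducibility (integrality) of $\Gout$ — without it, $\Gout$ could have many connected components acting nontrivially on $V$ while $\Lie(\Gout)$ only sees the identity component, and the two invariant spaces would differ. This is precisely why the realization \eqref{eq:intro} of $\Gc|_{\Co}$ as $\pi_*(G \times \Cto)^\Gamma$ is indispensable: it lets one invoke \cite{hongkumar:2023} to conclude that $\Gout$ is an integral ind-scheme (\Cref{lem:GoutIntegral}), and I expect the bulk of the careful work to be in verifying the ind-scheme-theoretic hypotheses needed to run the "zero differential on a reduced irreducible ind-scheme implies constant" argument in the infinite-dimensional setting — in particular checking that $V$ exhausts as a filtered union of finite-dimensional $\Gout$-submodules on which the action is algebraic, so that the classical characteristic-zero statement applies stratum-wise.
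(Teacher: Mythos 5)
Your proposal follows essentially the same route as the paper: uniformization plus integrality of $\Gout$ (\Cref{lem:GoutIntegral}) to reduce to $\Gout$-invariants of $\bigotimes_x \Ho^0(\GrGx,q_x^*\Lc)$, and then passage from $\Gout$-invariants to $\Lie(\Gout)$-invariants, for which the paper simply cites \cite[Proposition 7.4]{beauville.laszlo:1994:conformal} instead of re-running the ``zero differential on an integral ind-group implies constant'' argument you sketch. The one point worth flagging is that uniqueness of the $\Gout$-linearization does not follow from connectedness and reducedness alone — two linearizations differ by a character of $\Gout$, and ruling out nontrivial characters additionally requires that $\Lie(\Gout)$ be perfect (\Cref{prop:XGouttrivial} via \Cref{lem:simpleLie}) — though this affects only the canonicity of the statement, not the existence of the isomorphism for the given $\Lc$.
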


In \cite[Theorem 12.1]{hongkumar:2023}, Kumar and the second author proved a form of this conjecture for certain parahoric Bruhat--Tits group scheme under some constraint of central charge.   We will show in \Cref{thm:PRconj} that indeed \Cref{conj:PR} holds true in general.

\begin{rmk}  In \cite{pappas.rapoport:2010:questions} it is claimed that the action of the Lie algebra $\Ho^0(\Co, \Lie(\Gc))$ on $\bigotimes_{x \in S} \Ho^0(\GrGx, q_x^*\Lc)$ comes from the fact that the map
\[\Ho^0(\Co, \Lie(\Gc)) \longrightarrow \bigoplus_{x \in S} \Ho^0 \left( \Db_x^\times, \Lie(\Gc)\right)  = \bigoplus_{x \in S}  \Lie(\LG_x)
\] has a unique splitting to a central extension of $\bigoplus_{x \in S}  \Lie(\LG_x)$. This assertion does not look obvious to us, and can be seen as a consequence of \Cref{prop:XGouttrivial}.
\end{rmk}

\subsection{Establishing Pappas and Rapoport conjecture} \label{sec:PRconjProof} We describe in this section the ingredients and main steps used to prove \Cref{thm:PRconj}.  Let $\Gc$ be our parahoric Bruhat--Tits group scheme over $X$ and assume now that $\Rc \subseteq S \neq \emptyset$. As before, we denote by $\Co$ the affine curve $X \setminus S$. Denote by $G$ the simple and simply connected group over $k$ with the same root datum as one (thus all) geometric fiber of $\Gc$ over $\Co$. As customary, its Lie algebra $\Lie(G)$ will be denoted $\g$.

We begin with a crucial result which will be heavily used throughout. We recall that we assume that $\Rc \subseteq S \neq \emptyset$.

\begin{lemma} \label{lem:GoutIntegral} The ind-group scheme $\Gout$ is integral.    
\end{lemma}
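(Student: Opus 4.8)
The strategy is to reduce the integrality of $\Gout$ to a statement about the group scheme $(G \times \Cto)^\Gamma$ via the isomorphism \eqref{eq:intro}, and then to invoke the results of \cite{hongkumar:2023}. Recall that by \cite[Proposition 6.1.5]{damiolini.hong:2023} we have $\Gc|_{\Co} \cong \pi_*(G \times \Cto)^\Gamma$ for a finite group $\Gamma$ of diagram automorphisms of $G$ and an \'etale $\Gamma$-cover $\pi \colon \Cto \to \Co$. Since $\pi$ is finite, pushing forward along $\pi$ does not change sections over $\Co$: concretely, for every $k$-algebra $R$ one has
\[
\Gout(R) = \Gc(\Co \times \Spec R) \cong \bigl( (G \times \Cto)^\Gamma \bigr)(\Cto \times \Spec R) = (G \times \Cto)(\Cto \times \Spec R)^\Gamma,
\]
using that taking $\Gamma$-invariants commutes with the (flat) base change $\Spec R \to \Spec k$. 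Thus $\Gout$ is identified, as an ind-group scheme, with the $\Gamma$-fixed points of the loop-type group $L_{\Cto} G$ of $G$ along the affine curve $\Cto$.

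\textbf{Key steps.} First I would make precise the identification above, being careful that $\Cto \to \Co$ need not be connected a priori, but that $\Cto$ is a smooth affine curve (étale over the smooth affine $\Co$) on which $\Gamma$ acts; one reduces to the connected components, on each of which $G$ is the constant group. Second, I would recall that $L_{\Cto}G$ — the ind-scheme of sections of $G$ over the affine curve $\Cto$ — is an integral ind-scheme: this is a known fact for a simple simply connected group over an affine curve, and is exactly the kind of statement established in \cite{hongkumar:2023} (integrality, or reducedness plus irreducibility, of such loop groups and their fixed-point subgroups). Third, and this is the actual content, I would invoke the relevant result of \cite{hongkumar:2023} that the fixed-point ind-subscheme $\bigl(L_{\Cto}G\bigr)^\Gamma$ is again integral — here one uses that $\Gamma$ acts through diagram automorphisms of the simply connected $G$, so the fixed points are again (the loop group of) a nice group scheme, together with the fact that in characteristic zero taking invariants under a finite group preserves reducedness. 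Transporting this through the isomorphism of the first step gives that $\Gout$ is reduced and irreducible, hence integral.

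\textbf{Main obstacle.} The delicate point is the passage from $L_{\Cto}G$ to its $\Gamma$-fixed points while retaining \emph{both} reducedness and irreducibility as an ind-scheme: irreducibility of the fixed locus is not formal and genuinely uses the structure theory — that $\Gamma$ consists of diagram automorphisms so that $(L_{\Cto}G)^\Gamma$ is connected and is the section ind-scheme of a smooth connected group scheme over $\Cto$ (or over the quotient $\Co$), which is precisely where the hypothesis $\Rc \subseteq S$ (so that $\Gc|_{\Co}$ is reductive everywhere and \eqref{eq:intro} applies) and the results of \cite{hongkumar:2023} are essential. If \cite{hongkumar:2023} is phrased so as to directly assert integrality of $\Gc(\Co \times -)$ for $\Gc|_{\Co}$ of the form $\pi_*(G\times\Cto)^\Gamma$, then the proof is simply: apply \eqref{eq:intro} and cite that result; the real work has been done in establishing \eqref{eq:intro} and in \cite{hongkumar:2023}.
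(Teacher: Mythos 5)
Your proposal is correct and follows exactly the paper's argument: the paper likewise applies \cite[Proposition 6.1.5]{damiolini.hong:2023} to write $\Gc|_{\Co} \cong \pi_*(G\times\Cto)^\Gamma$, deduces $\Gout(R) = G(\Cto\times_{\Spec(k)}\Spec(R))^\Gamma$, and then cites \cite[Theorem 9.5, Corollary 11.5]{hongkumar:2023} for integrality of this $\Gamma$-fixed loop group. The scenario you describe in your last sentence is precisely what happens, so no further work is needed.
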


\begin{proof} We begin by applying  \cite[Proposition 6.1.5]{damiolini.hong:2023} to the restriction of $\Gc$ to $\Co$ (which is therefore reductive over an affine curve). Namely, that result allows one to show that there exist
\begin{itemize}
    \item a finite group $\Gamma \subset \Aut(G)$ acting on $G$ by diagram automorphisms, and 
    \item an étale $\Gamma$-cover $\pi \colon \Cto \to \Co$
\end{itemize}
such that
\begin{equation} \label{eq:mainGout} \Gc|_{\Co} \cong \pi_* \left(G \times \Cto\right)^\Gamma.
\end{equation} Thus we deduce that 
\[ \Gout(R) = G(\Cto \times_{\Spec(k)} \Spec(R))^\Gamma,
\]and therefore \cite[Theorem 9.5, Corollary 11.5]{hongkumar:2023} guaranteed that the ind-group scheme $\Gout$ is integral.   \end{proof}

\begin{prop}\label{prop:XGouttrivial}
    The only character of the ind-group scheme $\Gout$ is the trivial one. Thus
    \[ q_S^* \colon  \Pic(\Bun_\Gc) \cong 
\Pic_{\Gout}(\GrGS)  \longrightarrow \Pic(\GrGS)
    \] is injective.
\end{prop}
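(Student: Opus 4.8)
The plan is to reduce the entire statement to the single claim that the ind-group scheme $\Gout$ has no nontrivial character $\Gout \to \Gm$, and then to prove that claim by differentiating. The reduction is formal: the uniformization \eqref{eq:Heinloth} exhibits $\Bun_\Gc$ as the quotient stack $[\Gout \backslash \GrGS]$, so $q_S^*$ identifies $\Pic(\Bun_\Gc)$ with the equivariant Picard group $\Pic_{\Gout}(\GrGS)$, and the kernel of the subsequent forgetful map $\Pic_{\Gout}(\GrGS) \to \Pic(\GrGS)$ is the group of isomorphism classes of $\Gout$-linearizations of the structure sheaf $\Oc_{\GrGS}$. Since $\GrGS$ is a connected ind-projective ind-scheme we have $\Ho^0(\GrGS, \Oc^\times) = k^\times$, and a Rosenlicht-type description of the units on $\Gout \times \GrGS$ then identifies this kernel with $\Hom(\Gout, \Gm)$. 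Hence $q_S^*$ is injective as soon as $\Hom(\Gout, \Gm)$ is trivial, and the asserted isomorphism $\Pic(\Bun_\Gc)\cong\Pic_{\Gout}(\GrGS)$ is just descent along the torsor $q_S$.

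To see that every character $\chi \colon \Gout \to \Gm$ is trivial I would first compute its differential. By \eqref{eq:mainGout} we have $\Lie(\Gout) = \Ho^0(\Co, \Lie(\Gc)) = (\g \otimes_k \Oc(\Cto))^\Gamma$, where $\Gamma$ acts diagonally, through diagram automorphisms on $\g$ and through deck transformations on $\Oc(\Cto)$. This Lie algebra is \emph{perfect}: in the isotypic decomposition $\g = \g^\Gamma \oplus \mathfrak{p}$ the subalgebra $\g^\Gamma$ is semisimple and $\mathfrak{p}$ contains no trivial $\g^\Gamma$-summand (this is precisely the folding picture underlying twisted affine Lie algebras, and can be checked case by case), and a short computation with the resulting isotypic decomposition of $\g \otimes_k \Oc(\Cto)$ gives $[(\g \otimes_k \Oc(\Cto))^\Gamma, (\g \otimes_k \Oc(\Cto))^\Gamma] = (\g \otimes_k \Oc(\Cto))^\Gamma$. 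Since a Lie homomorphism from a perfect Lie algebra to the abelian Lie algebra $\Lie(\Gm) = k$ vanishes, we conclude $d\chi = 0$.

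Finally I would promote $d\chi = 0$ to $\chi = 1$ using that $\Gout$ is integral (\Cref{lem:GoutIntegral}). Irreducibility of the underlying space lets us write $\Gout = \varinjlim_i \Gg_i$ with each $\Gg_i$ an integral closed subscheme containing the identity $e$. On the formal neighbourhood of $e$ the character $\chi$ becomes a homomorphism to $\widehat{\Gm}$ with vanishing derivative, hence is trivial in characteristic zero (a primitive element of $\widehat{\Gm}$ is determined by its linear term); consequently the image of $\chi|_{\Gg_i} - 1$ in the completed local ring $\widehat{\Oc}_{\Gg_i, e}$ is zero. As $\Gg_i$ is integral, the ring of functions on an affine neighbourhood of $e$ injects into $\widehat{\Oc}_{\Gg_i, e}$, so $\chi|_{\Gg_i} = 1$; letting $i$ vary gives $\chi = 1$. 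I expect the one genuinely substantive point to be the perfectness of $\Ho^0(\Co, \Lie(\Gc))$ — everything else is routine descent for quotient stacks, Rosenlicht's lemma on units of a product, and the equivalence between formal groups and Lie algebras in characteristic zero — although one should also be slightly careful that the integral ind-presentation of $\Gout$ supplied by \Cref{lem:GoutIntegral} can indeed be taken with irreducible terms, which follows from irreducibility of $|\Gout|$.
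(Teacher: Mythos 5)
Your proposal is correct, and its overall architecture matches the paper's: reduce everything to the triviality of $\Hom(\Gout,\Gm)$, use integrality of $\Gout$ (\Cref{lem:GoutIntegral}) to pass to the Lie algebra, and conclude from perfectness of $\Ho^0(\Co,\Lie(\Gc)) \cong (\g\otimes_k\Oc(\Cto))^\Gamma$. The steps you spell out by hand --- the Rosenlicht-type identification of the kernel of $\Pic_{\Gout}(\GrGS)\to\Pic(\GrGS)$ with the character group of $\Gout$, and the promotion of $d\chi=0$ to $\chi=1$ on an integral ind-group in characteristic zero --- are exactly what the paper delegates to the argument of Laszlo and Sorger that it cites, so no disagreement there. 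Where you genuinely diverge is in the proof of perfectness. You decompose $\g$ into $\Gamma$-isotypic pieces, invoke the folding picture to see that $\g^\Gamma$ is simple and that the complement $\mathfrak{p}$ has no trivial $\g^\Gamma$-summand (a case-by-case check over the types $A_{2n-1}, A_{2n}, D_n, E_6, D_4$), and then propagate brackets through the invariants of $\g\otimes_k\Oc(\Cto)$ using the constant sections $\g^\Gamma\otimes 1$. The paper instead applies its \Cref{lem:simpleLie} directly to the coherent sheaf $\mathfrak{k}=\Lie(\Gc)|_{\Co}$: the bracket morphism $\wedge^2\mathfrak{k}\to\mathfrak{k}$ is surjective on fibers because every fiber is a simple Lie algebra (here one uses that $\pi$ is \'etale, so each fiber of $\pi_*(\g\otimes\Oc_{\Cto})^\Gamma$ over $\Co$ is a copy of $\g$, not of $\g^\Gamma$), hence surjective by Nakayama, and global sections over the affine curve $\Co$ give perfectness. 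The paper's route is uniform and avoids any classification of folding data; yours is more explicit about the structure of the twisted algebra and makes visible precisely which representation-theoretic inputs are needed. Both are complete modulo the routine verifications you flag.
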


\begin{proof} 
The isomorphism $\Pic(\Bun_\Gc) \cong 
\Pic_{\Gout}(\GrGS)$ immediately follows from \eqref{eq:Heinloth}.  Then, the injectivity of $q_S^*$ follows from the the triviality of any character of $\Gout$.

In view of \Cref{lem:GoutIntegral}, we know that $\Gout$ is integral. We can therefore proceed as in the proof of \cite[Corollary 5.2]{laszlo.sorger:1997}, so that it is left to prove that
\[ [(\g \otimes_k \Os(\Cto))^\Gamma, (\g \otimes_k \Os(\Cto))^\Gamma] = (\g \otimes_k \Os(\Cto))^\Gamma.
\] Since $\Gc$ is generically simple, we can apply \Cref{lem:simpleLie} and therefore conclude. 
\end{proof}

\begin{lemma} \label{lem:simpleLie}
Let $\mathfrak{k}$ be coherent sheaf of Lie algebras over an affine curve $U$ such that every point $x\in U$, the fiber $\mathfrak{k}_x$ is a simple Lie algebra.  Then $[\mathfrak{k}(U),\mathfrak{k}(U)]= \mathfrak{k}(U)$. 
\end{lemma}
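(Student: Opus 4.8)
The plan is to reduce the global statement $[\mathfrak{k}(U),\mathfrak{k}(U)]=\mathfrak{k}(U)$ to a pointwise (or at least local) statement, using the coherence of $\mathfrak{k}$ and the fact that $U$ is an affine curve. First I would observe that the $\Os_U$-module $\mathfrak{k}(U)$ is finitely generated (since $\mathfrak{k}$ is coherent and $U$ is affine) and that $[\mathfrak{k}(U),\mathfrak{k}(U)]$ is an $\Os(U)$-submodule: this is because for $f\in\Os(U)$ and $a,b\in\mathfrak{k}(U)$ one has $f[a,b]=[fa,b]$, so the bracket is $\Os(U)$-bilinear in each slot up to Leibniz terms that already lie in the submodule. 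Hence $\mathfrak{q}\coloneqq \mathfrak{k}(U)/[\mathfrak{k}(U),\mathfrak{k}(U)]$ is a finitely generated $\Os(U)$-module, and it suffices to show $\mathfrak{q}=0$.

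Next I would argue that $\mathfrak{q}$ is supported nowhere, by checking it at every point $x\in U$. Since $\mathfrak{k}$ is coherent over a smooth (hence regular) affine curve, $\mathfrak{k}_x$ is a free $\Os_{U,x}$-module of finite rank, and its fiber $\mathfrak{k}_x\otimes\kappa(x)$ is by hypothesis the simple Lie algebra $\mathfrak{k}_x$ in the notation of the statement. The key local input is that a simple Lie algebra $\mathfrak{s}$ satisfies $[\mathfrak{s},\mathfrak{s}]=\mathfrak{s}$; combined with Nakayama's lemma applied to the finitely generated $\Os_{U,x}$-module $\mathfrak{q}_x$, this forces $\mathfrak{q}_x=0$. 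Concretely: the image of $[\mathfrak{k}_x,\mathfrak{k}_x]$ in the fiber is all of $\mathfrak{k}_x\otimes\kappa(x)$, so $\mathfrak{q}_x\otimes\kappa(x)=0$, and Nakayama gives $\mathfrak{q}_x=0$. As this holds for every $x\in U$ and $\mathfrak{q}$ is coherent, $\mathfrak{q}=0$, i.e. $[\mathfrak{k}(U),\mathfrak{k}(U)]=\mathfrak{k}(U)$.

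The one point that needs a little care — and which I expect to be the main technical obstacle — is that $[\mathfrak{k}(U),\mathfrak{k}(U)]$, defined as the $k$-span of brackets of global sections, might a priori be smaller than the module of global sections of the sheaf-theoretic derived subalgebra $[\mathfrak{k},\mathfrak{k}]$; that is, one must be sure the formation of the derived subalgebra commutes with taking global sections on the affine curve $U$. This is where affineness is used: $[\mathfrak{k},\mathfrak{k}]$ is the image of the $\Os_U$-linear map $\bigwedge^2\mathfrak{k}\to\mathfrak{k}$ induced by the bracket, its cokernel $\mathfrak{q}$ is coherent, and $H^1(U,-)$ vanishes on quasi-coherent sheaves, so $\Gamma(U,-)$ is exact and $\Gamma(U,[\mathfrak{k},\mathfrak{k}])=[\mathfrak{k}(U),\mathfrak{k}(U)]$. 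Once this identification is in place, the fiberwise Nakayama argument above finishes the proof; alternatively, one can avoid it entirely by noting that the support of $\mathfrak{q}$ is a closed subset of $U$ which we have just shown to be empty, so $\mathfrak{q}=0$ and, again by exactness of $\Gamma(U,-)$, the span of global brackets is everything.
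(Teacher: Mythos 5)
Your argument is correct and is essentially the paper's own proof: both view the bracket as a map of coherent sheaves $L\colon \wedge^2\mathfrak{k}\to\mathfrak{k}$, use simplicity of the fibers plus Nakayama to get surjectivity of $L$, and use affineness of $U$ to pass to global sections. The only difference is that you explicitly spell out the step the paper leaves implicit, namely that the $k$-span of brackets of global sections coincides with $\Gamma(U,[\mathfrak{k},\mathfrak{k}])$; your justification of this via the Leibniz identity and exactness of $\Gamma(U,-)$ is fine.
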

\begin{proof}
The Lie bracket $[ \, , \,]$ of $\mathfrak{k}$ induces a morphism of coherent sheaves
\[ L \colon  \wedge^2 \mathfrak{k} \longrightarrow \mathfrak{k},\]
such that $L ( \wedge^2 \mathfrak{k} )=[\mathfrak{k},\mathfrak{k} ]$.
Since every fiber of $\mathfrak{k}$ is simple, $L_x$ is surjective on fibers. Nakayama's Lemma implies therefore that $L$ is surjective and since $U$ is affine we can conclude.
\end{proof}

We now have all the ingredients to prove Pappas and Rapoport conjecture.

\begin{theorem}\label{thm:PRconj} Let $\Lc$ be a line bundle on $\Bun_\Gc$. Then there is a canonical isomorphism
    \begin{equation}\label{eq:PRthm}
\Ho^0(\Bun_\Gc, \Lc) \cong \left[  \bigotimes_{x \in S} \Ho^0(  \GrGx, q_x^*\Lc) \right]^{\Ho^0(\Co, \Lie(\Gc))},
\end{equation} where the action of $\Ho^0(\Co, \Lie(\Gc))$ on $\bigotimes_{x \in S} \Ho^0(\GrGx, q_x^*\Lc)$ is induced by the inclusion $q^*_S$.
\end{theorem}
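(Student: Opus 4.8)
The plan is to deduce \Cref{thm:PRconj} from the geometric statement in \Cref{prop:XGouttrivial} together with standard cohomology-and-base-change / descent arguments for the uniformization morphism $q_S \colon \GrGS \to \Bun_\Gc$. First I would recall that, by the Uniformization Theorem \eqref{eq:Heinloth}, $\Bun_\Gc = [\Gout \setminus \GrGS]$, so that for any line bundle $\Lc$ on $\Bun_\Gc$ a global section is precisely a $\Gout$-invariant global section of $q_S^*\Lc$ on $\GrGS$; that is,
\[
\Ho^0(\Bun_\Gc, \Lc) \cong \Ho^0\!\left(\GrGS, q_S^*\Lc\right)^{\Gout}.
\]
Next, since $\GrGS = \prod_{x\in S}\GrGx$ and $q_S^*\Lc$ restricts on each factor to a line bundle $q_x^*\Lc$, a Künneth argument (using that each $\Ho^0(\GrGx, q_x^*\Lc)$ is finite-dimensional — a feature of dominant line bundles on partial affine flag varieties) identifies $\Ho^0(\GrGS, q_S^*\Lc)$ with the tensor product $\bigotimes_{x\in S}\Ho^0(\GrGx, q_x^*\Lc)$, compatibly with the $\Gout$-action coming via $\Gout \to \prod_{x\in S}\LG_x$.

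The remaining point is to pass from $\Gout$-invariance to $\Ho^0(\Co, \Lie(\Gc))$-invariance, i.e. to linearize the invariance condition. Here is where I would use integrality of $\Gout$ (\Cref{lem:GoutIntegral}) in an essential way: for a connected (indeed integral) ind-group scheme acting on a finite-dimensional vector space $V = \bigotimes_x \Ho^0(\GrGx, q_x^*\Lc)$, a vector is fixed by the group if and only if it is annihilated by the Lie algebra, because the orbit map is determined by its derivative when the source is reduced and connected. Concretely, $v\in V$ is $\Gout$-invariant iff the morphism $\Gout \to V$, $g\mapsto g\cdot v - v$, vanishes; since $\Gout$ is integral this morphism vanishes iff it vanishes on the formal neighborhood of the identity, i.e. iff $\Lie(\Gout) = \Ho^0(\Co,\Lie(\Gc))$ annihilates $v$. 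This yields exactly the identification \eqref{eq:PRthm}, and the canonicity of the isomorphism is built in because every step — \eqref{eq:Heinloth}, Künneth, and the Lie-theoretic reformulation — is canonical once the uniformization datum is fixed (and \Cref{prop:XGouttrivial} guarantees the datum itself, i.e. the linearization of $\Lc$, is unique up to canonical isomorphism, since $\Gout$ has no nontrivial characters).

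One subtlety I would be careful about: the action of $\Ho^0(\Co,\Lie(\Gc))$ on the tensor product is \emph{a priori} only an action of a central extension, and the content of the ``unique splitting'' discussed in the Remark after \Cref{conj:PR} is precisely that the extension splits canonically over $\Ho^0(\Co,\Lie(\Gc))$; this is subsumed by \Cref{prop:XGouttrivial} (triviality of characters of $\Gout$ forces the derived-subalgebra computation $[(\g\otimes\Os(\Cto))^\Gamma, (\g\otimes\Os(\Cto))^\Gamma] = (\g\otimes\Os(\Cto))^\Gamma$ via \Cref{lem:simpleLie}, which rules out a nontrivial central summand). I expect the main obstacle — or at least the step requiring the most care — to be the justification that group-invariance equals Lie-algebra-invariance for the ind-group $\Gout$: one must ensure the relevant orbit maps are morphisms of ind-schemes, that $\Gout$ being integral (not merely reduced) lets one test vanishing on an infinitesimal neighborhood, and that the $\Gout$-action on the relevant finite-dimensional space factors through a genuine (finite-type) quotient group on which classical Lie theory applies. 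All of these are guaranteed by \Cref{lem:GoutIntegral} and the structure $\Gout(R) = G(\Cto\otimes R)^\Gamma$ together with the integrability results of \cite{hongkumar:2023} cited there.
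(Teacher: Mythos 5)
Your overall strategy coincides with the paper's: uniformization identifies $\Ho^0(\Bun_\Gc,\Lc)$ with the $\Gout$-invariants of $\Ho^0(\GrGS, q_S^*\Lc)$, and integrality of $\Gout$ is what lets one replace group-invariance by Lie-algebra-invariance. However, the way you justify that last step contains a genuine gap: the spaces $\Ho^0(\GrGx, q_x^*\Lc)$ are \emph{not} finite-dimensional. By the Kumar--Mathieu version of Borel--Weil (used later in the proof of \Cref{thm:sectionTCB}) they are the duals $\Hc(\Lambda_x)^*$ of integrable highest-weight representations of (twisted) affine Kac--Moody algebras, hence infinite-dimensional whenever the central charge is positive. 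Consequently your reduction to ``classical Lie theory on a finite-type quotient group'' does not apply: the $\Gout$-action on $\Ho^0(\GrGS,q_S^*\Lc)$ does not factor through a finite-type quotient, and the orbit map $g\mapsto g\cdot v - v$ is not a morphism into a finite-dimensional affine space whose vanishing can be tested on a formal neighborhood of the identity. The finite-dimensionality claim is also what you lean on for the K\"unneth step, so that step is left without justification as written.

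The paper sidesteps all of this by working at the level of the ind-variety rather than of the representation: it invokes \cite[Proposition 7.4]{beauville.laszlo:1994:conformal}, which says that for an \emph{integral} ind-group acting on an \emph{integral} ind-variety with a linearized line bundle, equivariant sections coincide with sections annihilated by the Lie algebra. Note that this requires not only the integrality of $\Gout$ (\Cref{lem:GoutIntegral}) but also the integrality of $\GrGS$ (cited from \cite[Corollary 9.8]{hongkumar:2023}), an input absent from your argument. Your remarks on the uniqueness of the linearization are consistent with \Cref{prop:XGouttrivial}, although the logic there runs from \Cref{lem:simpleLie} to the triviality of characters of $\Gout$ rather than the reverse. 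To repair the proposal, replace the finite-dimensional Lie-theory argument with the Beauville--Laszlo proposition (or an equivalent statement about linearized line bundles on integral ind-schemes), after recording that $\GrGS$ is integral.
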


\begin{proof} Combining \eqref{eq:Heinloth} with the conclusions of \Cref{prop:XGouttrivial}, we immediately deduce that $q_S$ induces the isomorphism
\[\Ho^0(\Bun_\Gc, \Lc) \cong \left[ \Ho^0(\GrGS, q_S^*\Lc) \right]^{\Gout} = \left[  \bigotimes_{x \in S} \Ho^0(\GrGx, q_x^*\Lc) \right]^{\Gout}.
\] Since both $\Gout$ and $\GrGS$ are integral ind-variety (see \Cref{lem:GoutIntegral} and \cite[Corollary 9.8]{hongkumar:2023}), we can apply \cite[Proposition 7.4]{beauville.laszlo:1994:conformal} to conclude that the space of equivariant $\Gout$-sections coincides with the space of sections annihilated by its Lie algebra, thus concluding the argument.
\end{proof}

\section{Picard group of \texorpdfstring{$\Bun_\Gc$}{BunG}} \label{sec:picBun}

With the results from the previous section, we are one step closer to understand the Picard group of $\Bun_\Gc$. Throughout, we use the same notation as in the previous section.

\subsection{More on parahoric groups, affine Grassmannians and central charge} \label{sec:FlagI}  We describe in more explicit terms the affine Grassmannian $\GrGx$ for a fixed point $x$, which thus we drop from the notations and write $\GrG$ (and similarly $\LG$ and $\LpG$) instead.

Since the group $\Gc$ is a parahoric Bruhat--Tits group scheme we obtain that 
\begin{equation}\label{eq:LG} \LG(R) = G(R(\!(z)\!))^\tau\end{equation} for some outer automorphism $\tau$ of $G$ of order $r$ which acts $R$-linearly on $R(\!(z)\!)$ and such that $\tau(z)=\zeta_r z$ with $\zeta_r$ some primitive $r$-th root of one. (Note that we allow $r=1$, in which case $\tau$ is the identity morphism.) Furthermore $\LpG(k) \eqqcolon \Pc$ is a parahoric subgroup of $G(k(\!(z)\!))^\tau$. By abuse of notation we will write
\[ \Fl_{\Pc} = G(k(\!(z)\!))^\tau / \Pc.
\] to denote the ind-scheme $\GrG$.

We now give a more explicit description of the Picard group of $\Fl_{\Pc}$. Every parahoric subgroup $\Pc$ of $G(k(\!(z)\!))^\tau$ is determined by a facet $\fac$ of the Bruhat--Tits building of $G(k(\!(z)\!))^\tau$. The set of the vertices of $\fac$ is canonically in one-to-one correspondence with the set of nonempty subsets of $\Iht$, the set of vertices of the affine Dynkin diagram associated to the twisted loop group $G(k(\!(z)\!))^\tau$. 
When $\g$ is a simple Lie algebra of type $X_N$, then $\Iht$ is the set of vertices of the affine Dynkin diagram of type $X_N^{(r)}$. 

For each facet $\fac$, we denote by $\Pc_\fac$ the associated parahoric subgroup of $G(k(\!(z)\!))^\tau$ and by $Y_\fac$ the associated subset of $\Iht$. With this notation, we define the partial affine flag variety as the ind-scheme
\[ \Fl_\fac \coloneqq \Fl_{\Pc_\fac} = G(k(\!(z)\!))^\tau/\Pc_{\fac}, \]
whose Picard group has been described  by \cite{pappas.rapoport:2008:haines}  and \cite{zhu:2014:coherence} as
\begin{equation} \label{eq:PicGr} \Pic(\Fl_\fac) \cong \bigoplus_{i \in Y_\fac} \ZZ \Lambda_i.
\end{equation} In the above formula, $\Lambda_i$ denotes the $i$-th fundamental weight of the affine Dynkin diagram of type  $X_N^{(r)}$. The decomposition of \eqref{eq:PicGr} allows one to define the \bfem{central charge} of any line bundle on $\Fl_\fac$ as
\begin{equation} \label{eq:ccGr}
    \cc \left(\sum_{i \in Y_\fac} n_i \Lambda_i \right) \coloneqq \sum_{i \in Y_\fac} n_i \cha_i,
\end{equation}
where $\cha_i \in \{1, \dots, 6\}$ are the \textit{dual Kac labels} attached to the vertex $i\in \Iht$ \cite[\S6.1]{kac:1990:infinite}. In particular, we note that $\cha_o=1$ for the special affine note $o\in \Iht$ labeled as in \cite[\S6.1]{kac:1990:infinite}.

\subsection{Some consequences for \texorpdfstring{$\Bun_\Gc$}{BunG}} We now apply the above consideration to $\GrGS$ for a finite set $S$ of points of $X$. For the scope of this section we do not need to further assume that $\Rc \subseteq S$. In the local description given in \eqref{eq:LG}, the outer automorphism $\tau$ and its order will in general depend on the point $x$, so that we will now denote $\Iht$ by $\Ihx$ to stress this dependence. We will also use the notation $\LpG_x(k)=\Pc_x$.

\begin{lemma} \label{lem:PicGRGS} The isomorphism  \eqref{eq:PicGr} induces the identification
\[\Pic(\GrGS) =  \prod_{x \in S} \Pic(\GrGx) = \prod_{x \in S} \Pic(\Fl_{\Pc_x}) = \prod_{x \in S} \big(\bigoplus_{i \in Y_x} \ZZ \Lambda_i \big ), 
\] where $Y_x$ is the subset of $\Ihx$ corresponding to $\Pc_x$.
\end{lemma}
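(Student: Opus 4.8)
The plan is to reduce the statement to the single-point description of the Picard group recalled in \eqref{eq:PicGr}, the only nontrivial input being that the Picard group of a finite product of ind-schemes of the type $\Fl_{\Pc_x}$ splits as the product of the Picard groups of the factors. First I would observe that $\GrGS = \prod_{x \in S} \GrGx$ by definition, and that each factor $\GrGx$ is identified with the partial affine flag variety $\Fl_{\Pc_x}$ by the local analysis of \S\ref{sec:FlagI}, in particular by \eqref{eq:LG}; this already gives the second and third equalities in the displayed chain, and the fourth is a factor-by-factor application of \eqref{eq:PicGr} (from \cite{pappas.rapoport:2008:haines, zhu:2014:coherence}), with $Y_x = Y_{\fac_x} \subseteq \Ihx$ the subset attached to $\Pc_x$. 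So the entire content is the first equality $\Pic(\GrGS) = \prod_{x\in S}\Pic(\GrGx)$.

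For that, I would use the standard Künneth-type splitting of Picard groups. The natural map $\bigoplus_{x\in S}\mathrm{pr}_x^* \colon \prod_{x\in S}\Pic(\GrGx)\to \Pic(\GrGS)$ is always defined via pullback along the projections; one wants it to be an isomorphism. Injectivity follows by restricting a class on the product to a slice $\GrGx \times \prod_{y\neq x}\{e_y\}$, where $e_y$ denotes the base point (the image of the identity coset): this recovers each component, so the map is injective. Surjectivity is the point where one needs that $\GrGS$ is an integral (reduced and irreducible) ind-scheme and that each $\GrGx$ is integral with a rational base point and $\Ho^0(\GrGx,\Os) = k$ — all of which hold because partial affine flag varieties of this twisted type are integral ind-projective ind-schemes (see \cite[Corollary 9.8]{hongkumar:2023}, already invoked in the proof of \Cref{thm:PRconj}). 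Given a line bundle $\Lc$ on $\GrGS$, its restriction $\Lc_x$ to the slice through $x$ defines a class whose external product $\boxtimes_x \Lc_x$ differs from $\Lc$ by a line bundle trivial on every slice; by the See-saw principle for ind-schemes (reducing to the finite-dimensional Schubert-variety exhaustion, where See-saw applies and $\Ho^0(\Os)=k$), such a line bundle is trivial. Hence $\Lc \cong \boxtimes_x \Lc_x$ and the map is surjective.

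The main obstacle, and the only step requiring care, is making the See-saw argument precise in the ind-scheme setting: one must write each $\GrGx$ as an increasing union of its finite-dimensional Schubert varieties $\Fl_{\Pc_x}^{\leq w}$, note that $\Pic(\GrGx) = \varprojlim_w \Pic(\Fl_{\Pc_x}^{\leq w})$ and likewise for the product, and then apply the classical See-saw theorem on each $\Fl_{\Pc_{x_1}}^{\leq w_1}\times\cdots\times\Fl_{\Pc_{x_n}}^{\leq w_n}$, using properness and connectedness of the factors together with $\Ho^0(\Fl_{\Pc_x}^{\leq w},\Os)=k$ (which holds since these Schubert varieties are connected, reduced and proper). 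Passing to the limit is compatible with the slice restrictions, so the identification is uniform in $w$ and descends to $\GrGS$. Once this bookkeeping is in place, the lemma follows immediately by combining it with \eqref{eq:PicGr}.
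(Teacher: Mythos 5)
The paper states \Cref{lem:PicGRGS} without proof, treating it as an immediate consequence of \eqref{eq:PicGr}, so your write-up is necessarily more detailed than anything in the text. Your reduction of the lemma to the K\"unneth-type splitting $\Pic(\GrGS)=\prod_{x\in S}\Pic(\GrGx)$, and the passage to the exhaustion by products of finite-dimensional Schubert varieties with $\Pic$ of the ind-scheme the inverse limit over the exhaustion, are both correct and are the right way to make the statement precise.

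There is, however, a genuine gap in the See-saw step. After subtracting $\boxtimes_x\Lc_x$ you only know that the resulting line bundle $\mathcal{M}$ is trivial on the slices \emph{through the base point}, i.e.\ on $\Fl^{\le w_x}_{\Pc_x}\times\prod_{y\ne x}\{e_y\}$; you assert it is ``trivial on every slice,'' but that is exactly what needs proof. The See-saw theorem requires triviality of $\mathcal{M}$ on \emph{all} vertical slices $\{y_1\}\times Y_2$ (plus one horizontal slice), and properness, connectedness and $\Ho^0(\Os)=k$ alone do not yield this: for a product of two abelian varieties the identical argument would ``prove'' $\Pic(A\times B)=\Pic(A)\times\Pic(B)$, which fails because of the Poincar\'e bundle. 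What rescues the argument here is that Schubert varieties $Y$ in (twisted) partial affine flag varieties satisfy $\Ho^1(Y,\Os_Y)=0$ (they are normal with rational singularities, resolved by Bott--Samelson varieties; see \cite{pappas.rapoport:2008:haines}), so their Picard functor is discrete and the class of $\mathcal{M}|_{\{y_1\}\times Y_2}$ is locally constant in $y_1$; connectedness of $Y_1$ then upgrades triviality at the base point to triviality on every vertical slice, after which See-saw applies. You should state this vanishing (equivalently, the rigidity of line bundles on $Y_1\times Y_2$ when $\Pic^0(Y_i)=0$) as an explicit input; with it, the rest of your argument, including the compatibility with the limit over $w$, goes through.
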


In view of the previous lemma we will denote line bundles on $\GrGS$ by $\Lc_{\vLambda}$, where $\vLambda=(\Lambda_x)_{x \in S}$ are the corresponding weights. The line bundle $\Lc_{\vLambda}$ is called dominant if the weights $\Lambda_x$ are dominant for every $x \in S$.

Recall that every $x \in X$ defines the quotient map $q_x \colon \GrGx \to \Bun_\Gc$ as in \eqref{eq:unif}, so that we have an induced map
\[ q_x^* \colon \Pic(\Bun_\Gc) \longrightarrow \Pic(\GrGx).
\] We define the \bfem{central charge} of a line bundle $\Lc$ over $\Bun_\Gc$ to be 
\begin{equation} \label{eq:ccBunG} \cc(\Lc) \coloneqq \cc(q_x^*\Lc),
\end{equation} for any $x \in X$. The fact that this is well defined, i.e. that the function $\cc$ is constant in $x$ can be found in \cite[Proposition 4.1]{zhu:2014:coherence}.

Assume now that $S$ is a non-empty subset of $X$ containing $\Rc$. Under these assumptions, we can use \Cref{prop:XGouttrivial} and combine it with  with \eqref{eq:ccBunG} to deduce that there are natural obstructions to the surjectivity of $q^*_S$. As $x$ varies in $S$, the central charge maps \eqref{eq:ccGr} naturally induce the map
\[ \cc_S \colon \Pic(\GrGS) = \prod_{x \in S}\Pic( \GrGx) \longrightarrow \bigoplus_{x \in S}\ZZ.
\] Denote by $\Delta \colon \ZZ \to  \bigoplus_{x \in S}\ZZ$ the diagonal inclusion, and define  
\[ \Pic^\Delta(\GrGS) \coloneqq \cc_S^{-1}(\Delta(\ZZ)),
\] that is $\Pic^\Delta(\GrGS)$ is the group of line bundles on $\GrGS$ whose central charge, computed using the various points $x \in S$, is constant. Since  the central charge is constant line bundles on $\Bun_\Gc$, we immediately obtain the following result.

\begin{cor}\label{cor:PicDeltainPicBunG} The image of $q^*_S$ is contained in $\Pic^\Delta(\GrGS)$.    
\end{cor}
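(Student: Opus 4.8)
The plan is to combine \Cref{prop:XGouttrivial} with the fact---already recorded in the excerpt---that the central charge $\cc(\Lc)$ of a line bundle on $\Bun_\Gc$ is independent of the point $x$ used to compute it via \cite[Proposition 4.1]{zhu:2014:coherence}. Concretely, let $\Lc \in \Pic(\Bun_\Gc)$. By \Cref{prop:XGouttrivial} the pullback $q_S^*\Lc$ is a well-defined element of $\Pic(\GrGS) = \prod_{x \in S}\Pic(\GrGx)$, and its $x$-component is exactly $q_x^*\Lc$ under the projection $\GrGS \to \GrGx$ (this is immediate from the definition of $q_x$ as the composition of $q_S$ with the inclusion of the $x$-factor, cf. \eqref{eq:unif}). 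Applying the component central charge maps \eqref{eq:ccGr}, we get $\cc_S(q_S^*\Lc) = (\cc(q_x^*\Lc))_{x \in S}$.

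The key step is then to observe that, by \eqref{eq:ccBunG} and the well-definedness statement following it, $\cc(q_x^*\Lc) = \cc(\Lc)$ is the same integer for every $x \in S$. Hence the tuple $(\cc(q_x^*\Lc))_{x \in S}$ lies in the diagonal $\Delta(\ZZ) \subseteq \bigoplus_{x \in S}\ZZ$, which is to say $q_S^*\Lc \in \cc_S^{-1}(\Delta(\ZZ)) = \Pic^\Delta(\GrGS)$. Since $\Lc$ was an arbitrary element of $\Pic(\Bun_\Gc)$, this shows that the image of $q_S^*$ is contained in $\Pic^\Delta(\GrGS)$, as claimed.

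There is really no serious obstacle here: the corollary is a formal consequence of two facts stated just above it, namely the injectivity (hence well-definedness of the pullback) from \Cref{prop:XGouttrivial} and the constancy of the central charge from \cite[Proposition 4.1]{zhu:2014:coherence}. The only point requiring a word of care is the compatibility of $q_S^*$ with the various $q_x^*$ under the product decomposition $\Pic(\GrGS) = \prod_{x\in S}\Pic(\GrGx)$ of \Cref{lem:PicGRGS}; but this is built into the construction, since $q_x$ factors as $\GrGx \hookrightarrow \GrGS \xrightarrow{q_S} \Bun_\Gc$ (embedding $\GrGx$ via the base point at the other factors), so that the $x$-th component of $q_S^*\Lc$ is $q_x^*\Lc$ by functoriality of pullback. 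With that identification in hand the argument is the one-line computation above.
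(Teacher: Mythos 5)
Your proof is correct and matches the paper's argument: the corollary is deduced immediately from the constancy of the central charge on $\Bun_\Gc$ (via \cite[Proposition 4.1]{zhu:2014:coherence} and \eqref{eq:ccBunG}) together with the definition of $\Pic^\Delta(\GrGS)$ as $\cc_S^{-1}(\Delta(\ZZ))$. The extra care you take with the compatibility of $q_S^*$ and the $q_x^*$ under the product decomposition is a harmless elaboration of what the paper leaves implicit.
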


We conjecture that indeed $\Pic^\Delta(\GrGS)$ only detects line bundles coming from $\Bun_\Gc$, that is that $q^*_S$ realizes a bijection between $\Pic(\Bun_\Gc)$ and $\Pic^\Delta(\GrGS)$. The central charge morphisms, on $\Pic(\Bun_\Gc)$ and $\Pic^\Delta(\GrGS)$, induce the two exact sequences:
\begin{equation} \label{Heinloth_es} \xymatrix{
0 \ar[r] & \prod_{x \in \Rc} X^*(\Gc_x)   \ar[r] & \Pic(\Bun_\Gc)  \ar[r] & \cc_\Gc\ZZ \ar[r] & 0}\end{equation}
and \begin{equation}\label{eq:PR34} \xymatrix{
0 \ar[r] & \prod_{x \in \Rc} X^*(\Gc_x)  \ar[r] & \Pic^\Delta(\GrGS) \ar[r] & \cc_\Delta\ZZ \ar[r] & 0,
}
\end{equation}
where both $\cc_\Gc$ and $\cc_\Delta$ are positive integers. The exact sequence \eqref{Heinloth_es} is the content of \cite[Theorem 3]{heinloth:2010:uniformization}, while for \eqref{eq:PR34} we refer to \cite[(3.4)]{pappas.rapoport:2010:questions}. The map $q_S^*$, together with the fact that the central charge morphism on $\Bun_\Gc$ is induced from that on $\GrGS$, gives a map of exact sequences
\[ \xymatrix{
0 \ar[r] & \prod_{x \in \Rc} X^*(\Gc_x) \ar@{=}[d]   \ar[r] & \Pic(\Bun_\Gc)  \ar[r] \ar@{^(->}[d]^{q^*_S} & \cc_\Gc\ZZ  \ar@{^(->}[d]^\iota \ar[r] & 0\\
0 \ar[r] & \prod_{x \in \Rc} X^*(\Gc_x)  \ar[r] & \Pic^\Delta(\GrGS) \ar[r] & \cc_\Delta\ZZ \ar[r] & 0,
}
\] where $\iota$ denotes the usual inclusion (that is $\cc_\Gc$ maps to itself).  It is therefore clear that our conjecture would hold true if $\iota$ were an isomorphism, that is if
\begin{equation} \label{eq:conj} \cc_\Gc=\cc_\Delta.\end{equation} Since $\cc_\Delta$ divides $\cc_\Gc$, we can use this fact to give a lower bound on $\cc_\Gc$. Note that $\cc_\Delta$ is explicitly given by
\begin{equation} \label{eq:cDelta}
    \cc_\Delta = \underset{x \in \Rc}{\lcm}  \left( \gcd_{i \in Y_x} \cha_i \right),
\end{equation}
and from the above discussion it follows that:  
\begin{lemma} \label{lem:lowerBound} The positive integer $\cc_\Gc$ is a multiple of $\cc_\Delta=\underset{x \in \Rc}{\lcm}  \left( \underset{i \in Y_x}\gcd \cha_i \right)$.    
\end{lemma}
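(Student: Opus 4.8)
The plan is to read off $\cc_\Delta\mid\cc_\Gc$ from the ladder of short exact sequences displayed just above the statement, and then to verify the closed formula \eqref{eq:cDelta} for $\cc_\Delta$ by computing the image of the central charge map.

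First I would treat the divisibility. By \Cref{prop:XGouttrivial} the pullback $q_S^*$ is injective, and by \Cref{cor:PicDeltainPicBunG} it factors through $\Pic^\Delta(\GrGS)$; this produces the commutative ladder formed by \eqref{Heinloth_es} and \eqref{eq:PR34} with the identity on the left-hand kernels and $q_S^*$ injective in the middle. A short diagram chase then shows the induced map $\iota\colon\cc_\Gc\ZZ\to\cc_\Delta\ZZ$ on central charges is injective. Moreover, by the very definition \eqref{eq:ccBunG} of the central charge on $\Bun_\Gc$ as that of a pullback $q_x^*\Lc$, the map $\iota$ is compatible with the tautological inclusions of $\cc_\Gc\ZZ$ and $\cc_\Delta\ZZ$ into $\ZZ$, so it sends $\cc_\Gc$ to $\cc_\Gc$; in other words $\iota$ is the inclusion $\cc_\Gc\ZZ\subseteq\cc_\Delta\ZZ$ of subgroups of $\ZZ$, whence $\cc_\Delta\mid\cc_\Gc$.

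Next I would establish \eqref{eq:cDelta}. Unwinding $\Pic^\Delta(\GrGS)=\cc_S^{-1}(\Delta(\ZZ))$: by \Cref{lem:PicGRGS} and \eqref{eq:ccGr}, the component of $\cc_S$ at a point $x$ is the homomorphism $\bigoplus_{i\in Y_x}\ZZ\Lambda_i\to\ZZ$, $\sum_i n_i\Lambda_i\mapsto\sum_i n_i\cha_i$, whose image is exactly $d_x\ZZ$ with $d_x\coloneqq\gcd_{i\in Y_x}\cha_i$. Hence the image of $\cc_S$ is $\bigoplus_{x\in S}d_x\ZZ$, and $\Delta(n)$ lies in it precisely when $d_x\mid n$ for all $x$, i.e. when $\lcm_{x\in S}d_x\mid n$; therefore the central charge of $\Pic^\Delta(\GrGS)$ is $(\lcm_{x\in S}d_x)\ZZ$, so $\cc_\Delta=\lcm_{x\in S}(\gcd_{i\in Y_x}\cha_i)$. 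Finally, for $x\in S\setminus\Rc$ the fiber $\Gc|_x$ is reductive, so $\GrGx$ is an ordinary affine Grassmannian, $Y_x$ is the singleton $\{o\}$ of the special vertex and $\cha_o=1$; thus $d_x=1$ for these $x$ and they drop out of the lcm, yielding $\cc_\Delta=\lcm_{x\in\Rc}(\gcd_{i\in Y_x}\cha_i)$, which combined with the divisibility above proves the lemma.

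The statement is mostly bookkeeping, so I do not expect a real obstacle; the one point that needs care is that the ladder of \eqref{Heinloth_es} and \eqref{eq:PR34} is genuinely commutative with the \emph{identity} on the kernels --- equivalently, that $q_S^*$ identifies $\prod_{x\in\Rc}X^*(\Gc_x)\subseteq\Pic(\Bun_\Gc)$ with the same subgroup of $\Pic^\Delta(\GrGS)$, i.e. that central-charge-zero line bundles correspond under $q_S^*$. This is exactly what is encoded in \cite[Theorem 3]{heinloth:2010:uniformization} for the top row and \cite[(3.4)]{pappas.rapoport:2010:questions} for the bottom row, so no further input is required.
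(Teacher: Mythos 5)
Your proof is correct and takes essentially the same route as the paper: the divisibility $\cc_\Delta\mid\cc_\Gc$ is read off from the commutative ladder of the two central-charge exact sequences \eqref{Heinloth_es} and \eqref{eq:PR34} with $q_S^*$ injective and the identity on the kernels, and the closed formula for $\cc_\Delta$ is obtained by computing the image of $\cc_S$ componentwise. Your observation that the points of $S\setminus\Rc$ drop out of the lcm because $\cha_o=1$ is exactly the content of \Cref{rmk:SvsR}.
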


\begin{rmk} \label{rmk:SvsR}
    Note that to compute $\cc_\Delta$ in \eqref{eq:cDelta} we only consider points in $\Rc$ and not those in $S \setminus \Rc$. The formula will still hold taking $x \in S$ because when $x \in S\setminus \Rc$, then $o \in Y_x$ and therefore $\cha_o=1$. 
\end{rmk}

\begin{eg} In \cite{HongYu:2024:coherence}, Hong and Yu consider the generically simply connected parahoric Bruhat--Tits group scheme $\Gc$ over $\PP^1$ such that $\Gc(\mathcal{O}_0)$ is a parahoric group associated to a facet $\fac$ and $\Gc(\mathcal{O}_\infty)$ is a parahoric group associated to the negative facet $-\fac$. They showed that any line bundle $\mathcal{L}$ on $\Gr_{\Gc, 0}$ descends to a line bundle $\Bun_{\Gc}$. Combining this with \Cref{lem:lowerBound}, it follows that $\cc_\Gc=\cc_\Delta= \underset{i \in Y_0}\gcd \cha_i $. In particular, it implies that $\cc_\Gc=2$,  when \[\Gc=\pi_* (\SL_{2r+1} \times {\PP^1 })^{\tau},\] where $\pi \colon \PP^1 \to \PP^1$ is the $2$-to-$1$ cover ramified at $0$ and $\infty$ and $\tau$ acts on $\SL_{2r+1}$ by the non trivial diagram automorphism.  \end{eg}

\section{Identification with twisted conformal blocks} \label{sec:tcb}

\subsection{Twisted conformal blocks} We recall in this section the construction of twisted conformal blocks and show that they describe global sections of line bundles on $\Bun_\Gc$. We refer to \cite{tuy,damiolini:2020:conformal,hongkumar:2023} for proofs and details.

\pp Let $X$ be a projective curve, $\Gamma$ be a finite group and $\pi \colon C \to X$ a possibly ramified $\Gamma$-cover. Let $\g$ be a simple Lie algebra and assume that $\Gamma$ acts on $\g$. Then we construct the sheaf of Lie algebras \[\Gg \coloneqq \pi_*(\g \otimes \Oc_C)^\Gamma\] over $X$. Note that, for every $x \in X$, the space of sections of $\Gg$ over the punctured disk $\Db_x^\times$ is a Lie algebra of the form $L(\g, \sigma)$ for some $\sigma \in \Aut(\g)$ (which depends on $x$). 
Denote by $\hGg_x$ the central extension of $L(\g, \sigma)$ which is isomorphic to the twisted affine Lie algebra $\hL(\g, \sigma)$.  Explicitly, $\hGg_x$ is the central extension of ($\g \otimes k(\!(z)\!) )^{\sigma}$ whose underlying vector space is \[(\g \otimes k(\!(z)\!) )^{\sigma}  \oplus k,
\] and with Lie bracket given by
\[ [X \otimes f(z), Y \otimes g(z)] = [X,Y] \otimes f(z) g(z) + \dfrac{1}{|\sigma|} \Res(g(z)f'(z)dz) \langle X | Y \rangle,
\] where the form $\langle \; | \; \rangle$ is the unique multiple of the Killing form  $\kappa$ of $\g$ such that $\langle \theta, \theta \rangle =2$ for $\theta$ the highest root of $\g$. Equivalently $2 \check{\g} \langle \; | \; \rangle = \kappa$, for $\check{\g}$ the dual Coxeter number of $\g$. 

\pp Consider now a non-empty subset $S$ of $X$ containing the branch locus of $\pi$ which we denote $\Rc$. For every $x \in S$ denote by $\Hc(\Lambda_x)$ the integrable, irreducible, highest weight representation of $\hGg_x$ associated to the dominant weight $\Lambda_x$ of $\hGg_x$. Set
\[ \Hc(\vLambda) = \bigotimes_{x \in S} \Hc(\Lambda_x) 
\] and assume that $\Hc(\Lambda_x)$ are all representations of the same level. This means that, the central elements of $\bigoplus_{x \in S} \hGg_x$ all act by the same scalar. Therefore $\Hc(\vLambda)$ is a representation of $\hGg_S$, the quotient of $\bigoplus_{x \in S} \hGg_x$ under the identification of the central elements. Denote by $\Gc_S$ the direct sum $\bigoplus_{x \in S} \Gc(\Db_x^\times)$ and set $\Co=X\setminus S$. The residue theorem ensures that the central extension \[ 0 \to k \to \hGg_S \to \Gg_S \to 0\] restricted to the Lie algebra
\[ \Gg(\Co) = (\g \otimes \Oc_C(C\setminus \pi^{-1} S))^\Gamma
\] splits. Therefore we can view $\Gg(\Co)$ as a Lie subalgebra of $\hGg_S$ and so it acts on $\Hc(\vLambda)$.

\pp We define the space of \bfem{twisted conformal blocks} associated with the cover $\pi \colon C \to X$, the Lie algebra $\Gc$, the points $S$ and the representation $\vLambda$ to be the subspace of $\Hc(\vLambda)^*$ which is annihilated by $\Gg(X\setminus S)$, namely
\[ \VV^\dagger(\Gg;\vLambda)_{C \to X;S}  \coloneqq \Hom_{\Gg(\Co)}\left(\Hc(\vLambda), k \right)=\left[\Hc(\vLambda)^*\right]^{\Gg(\Co)}.
\] When $\Gg$ is understood, we will simply denote this space by $\VV^\dagger(\vLambda)_{C \to X;S}$.

As described in \cite{tuy, damiolini:2020:conformal, hongkumar:2023}, these spaces fit together to define a locally free sheaf of finite rank over the moduli stack parametrizing $\Gamma$-covers of curves. Another key properties of conformal blocks, is the fact that they can be identified with the space of global sections of appropriate line bundles on $\Bun_\Gc$ as we describe next.

\subsection{Sections of line bundles as twisted conformal blocks} \label{subsec:gtfCB} We show in this section that, given a parahoric Bruhat--Tits group $\Gc$ over $X$, spaces $\Ho^0(\Bun_\Gc, \Lc)$ can be described via twisted conformal blocks. \Cref{thm:sectionTCB} generalizes \cite{beauville.laszlo:1994:conformal, laszlo.sorger:1997, hongkumar:2023}. 

One of the key input is to use \cite[Proposition 6.1.5]{damiolini.hong:2023}. As already described in the proof of \Cref{lem:GoutIntegral}, this result ensures the existence of a finite group $\Gamma \subseteq \Aut(G)$ acting by diagram automorphisms on $G$ and an étale $\Gamma$-cover $\pi \colon \Cto \to \Co$ such that
\begin{equation} \label{eq:Gout}  \Gc|_{\Co} \cong \pi_* \left(G \times \Cto\right)^\Gamma.
\end{equation} 
Note that since the curve $X$ is proper, we can extend $\Cto$ to a $\gamma$ cover $\bar{\pi} \colon \Ct \to X$, which is ramified exactly over the non-split bad points of $\Gc$. Furthermore, the action of $\Gamma$ on $G$ gives an action of $\Gamma$ on $\g=\Lie(G)$ and therefore we construct the sheaf of Lie algebras 
\begin{equation} \label{eq:Gg}
    \Gg \coloneqq \bar{\pi}_*(\g \otimes \Oc_\Ct)^\Gamma
\end{equation} over $X$. Using this notation, we can state one of the main results:

\begin{theorem} \label{thm:sectionTCB}  Let $\Lc$ be a line bundle on $\Bun_\Gc$ and denote by $\Lc_{\vLambda}$ the associated line bundle on $\GrGS$. Then \eqref{eq:PRthm} induces the identification 
\[  \Ho^0(\Bun_\Gc, \Lc) \cong \VV^\dagger(\Gg;\vLambda)_{C \to X; S}.
\]\end{theorem}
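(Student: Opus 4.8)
The plan is to chain together the two main comparison isomorphisms already in hand. By \Cref{thm:PRconj} we have
\[
\Ho^0(\Bun_\Gc, \Lc) \cong \Big[\bigotimes_{x\in S}\Ho^0(\GrGx, q_x^*\Lc)\Big]^{\Ho^0(\Co,\Lie(\Gc))},
\]
so the work is to match the right-hand side, term by term, with the defining presentation
\[
\VV^\dagger(\Gg;\vLambda)_{C\to X;S}=\big[\Hc(\vLambda)^*\big]^{\Gg(\Co)}
\]
of \eqref{eq:CBintro}. Three pieces must be lined up: the local factors, the local group actions, and the global constraint.

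\smallskip

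\noindent\textbf{Step 1: local identification of sections with duals of integrable modules.}
Fix $x\in S$ and use the local presentation $\LG_x(R)=G(R(\!(z)\!))^\tau$ from \eqref{eq:LG}, where the order of $\tau$ at $x$ I will call $r_x$. By \eqref{eq:Gout}, the $\Gamma$-cover $\pi$ near $x$ induces exactly such a twisted loop group, and the associated twisted affine Lie algebra is $\hGg_x\cong\hL(\g,\sigma)$ for the appropriate $\sigma$. The line bundle $q_x^*\Lc=\Lc_{\Lambda_x}$ on $\GrGx=\Fl_{\Pc_x}$ corresponds via \eqref{eq:PicGr} to the dominant weight $\Lambda_x$. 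The Borel--Weil-type statement for (partial) affine flag varieties of twisted loop groups---which is exactly the content used in \cite{pappas.rapoport:2008:haines, zhu:2014:coherence} and, at the level of representation theory, \cite{kac:1990:infinite}---identifies $\Ho^0(\Fl_{\Pc_x},\Lc_{\Lambda_x})$ with $\Hc(\Lambda_x)^*$ as a module over the central extension of $G(k(\!(z)\!))^\tau$ (equivalently over $\hGg_x$). Here the dominance hypothesis on $\Lc$ is what guarantees $\Ho^0\neq 0$ and that one really gets the irreducible integrable module $\Hc(\Lambda_x)$ and not something larger; and the levels automatically agree across $x\in S$ because $\cc(q_x^*\Lc)$ is independent of $x$ by \cite[Proposition 4.1]{zhu:2014:coherence}. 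Tensoring over $S$ gives $\bigotimes_{x\in S}\Ho^0(\GrGx,q_x^*\Lc)\cong\Hc(\vLambda)^*$ as a module over $\hGg_S$, the central quotient of $\bigoplus_x\hGg_x$.

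\smallskip

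\noindent\textbf{Step 2: matching the two invariance conditions.}
It remains to check that the Lie-algebra action of $\Ho^0(\Co,\Lie(\Gc))=\Gg(\Co)$ appearing in \eqref{eq:PRthm}---namely the one induced by the inclusion $q_S^*$ followed by the diagonal $\Gg(\Co)\hookrightarrow\bigoplus_x\Lie(\LG_x)$---coincides with the action used in \eqref{eq:CBintro}, which comes from the residue-theorem splitting of $0\to k\to\hGg_S\to\Gg_S\to 0$ over $\Gg(\Co)$. Both are splittings of the same sequence over the same Lie algebra, and since $\Gg(\Co)$ is perfect (this is precisely \Cref{lem:simpleLie}, applied to $\mathfrak k=\Lie(\Gc)|_\Co$, whose fibers are simple), such a splitting is \emph{unique}. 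Hence the two actions agree on the nose, the two invariant subspaces of $\Hc(\vLambda)^*$ coincide, and
\[
\Ho^0(\Bun_\Gc,\Lc)\cong\big[\Hc(\vLambda)^*\big]^{\Gg(\Co)}=\VV^\dagger(\Gg;\vLambda)_{C\to X;S},
\]
which is the claim. (The naturality/canonicity asserted in the statement follows because each identification in Steps 1--2 is canonical once $\Lc\mapsto\vLambda$ is fixed.)

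\smallskip

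\noindent\textbf{Main obstacle.}
The routine-looking but genuinely load-bearing step is Step 1: the Borel--Weil identification of $\Ho^0(\Fl_{\Pc_x},\Lc_{\Lambda_x})$ with $\Hc(\Lambda_x)^*$ in the \emph{twisted} setting and at a \emph{partial} (parahoric, not Iwahori) flag variety. One must be careful that the weight $\Lambda_x$ obtained from \eqref{eq:PicGr} is the one whose associated integrable $\hGg_x$-module is the right $\Hc(\Lambda_x)$, that the parahoric $\Pc_x$ is exactly the stabilizer subgroup cutting out that module (so that the ind-scheme $\Fl_{\Pc_x}$ and the module are genuinely dual), and that ``dominant line bundle'' is the correct positivity condition for the cohomology-vanishing needed to get a clean $\Ho^0$. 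Once this dictionary is set up---most of which can be cited from \cite{pappas.rapoport:2008:haines, zhu:2014:coherence, kac:1990:infinite}---Step 2 is then short, resting only on perfectness of $\Gg(\Co)$, which is \Cref{lem:simpleLie}.
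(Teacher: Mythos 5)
Your proof is correct and follows essentially the same route as the paper: Borel--Weil to identify $\bigotimes_{x\in S}\Ho^0(\GrGx,q_x^*\Lc)$ with $\Hc(\vLambda)^*$, then \Cref{thm:PRconj}, then the observation that the $q_S^*$-induced action and the residue-theorem action are both splittings of the central extension $0\to k\to\hGg_S\to\Gg_S\to 0$ over $\Gg(\Co)$, which must agree by uniqueness of the splitting coming from perfectness of $\Gg(\Co)$ (\Cref{lem:simpleLie}). The only presentational difference is that the paper justifies that the $q_S^*$-action really is such a splitting by working at the group level first (integrating $\Hc(\vLambda)$ to a projective representation of $\LG_S$ and constructing the extension $\widehat{\Gout}(\vLambda)$, whose splitting is equivalent to descent of $\Lc_{\vLambda}$), whereas you assert it directly at the Lie-algebra level.
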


\begin{proof} We first of all recall that the Borel--Weil theorem (or rather its generalization due to Kumar and Mathieu) implies that
$\Hc(\Lambda)^* = \Ho^0(\GrGx, \Lc_\Lambda)$
and therefore
\begin{equation*} \label{eq:BW} \Hc(\vLambda)^* = \Ho^0(\GrGS, \Lc_{\vLambda}). 
    \end{equation*}
In view of \Cref{thm:PRconj}, in order to conclude it is enough to show that the two Lie algebras $\Gg(\Co)$ and $\Ho^0(\Co, \Lie(\Gc))$, which are naturally identified, induce the same action on $\Hc(\vLambda)$. To do so we will need to construct some auxiliary central extensions of Lie algebras (and of groups) which will be used also in the proof of \Cref{thm:criterion}. 

The following lemma is a consequence of \cite[Proposition 10.2 and Theorem 10.3]{hongkumar:2023}.
 
    \begin{lemma}  \label{lem:Faltings} Let $\Lc_{\vLambda} \in  \Pic^\Delta(\GrGS)$. Then the representation $\Hc(\vLambda)$ of $\hGg_S$ can be uniquely integrated to a projective representation of $\LG_S \coloneqq \prod_{x \in S} \LG_x $. That is, there exists a unique map $\rho_{\vLambda} \colon \LG_S \to \PGL(\Hc(\vLambda))$ whose derivative recovers $\Hc(\vLambda)$ as representation of $\hGg_S$.
    \end{lemma}

Therefore, we define the central extension $\LG_S(\vLambda)$ of $\LG_S$ by pulling back the exact sequence
    \[\xymatrix{
    1 \ar[r] & \Gm \ar[r] & \GL(\Hc(\vLambda)) \ar[r] & \PGL(\Hc(\vLambda)) \ar[r] & 1
    }\]
    along the map $\rho_{\vLambda}$, obtaining the exact sequence
    \begin{equation}\label{eq:sesLGS} \xymatrix{
    1 \ar[r] & \Gm \ar[r] & \widehat{\LG}_S(\vLambda) \ar[r] & \LG_S \ar[r] & 1.
    }\end{equation}
Starting with \eqref{eq:sesLGS}, we can restrict it to $\Gout$, and define the central extension
\begin{equation}\label{eq:sesGout}
     \xymatrix{1 \ar[r] & \Gm \ar[r] & \widehat{\Gout}(\vLambda) \ar[r] & \Gout \ar[r] & 1.}
\end{equation}
From the defining properties of  \eqref{eq:sesLGS} and \eqref{eq:sesGout}, it follows that the exact sequences of Lie algebras obtained from \eqref{eq:sesLGS} and \eqref{eq:sesGout} are given by 
\begin{equation*}\xymatrix{
0 \ar[r] & k \ar[r] \ar@{=}[d] & \hGg_S \ar[r] & \bigoplus_{x \in S} \Gg(\Db_x^\times) \ar[r] &  0\\
0 \ar[r] & k \ar[r] & \hGg(\Co) \ar[r] \ar[u] & \Gg(\Co) \ar[r] \ar[u] & 0.
}
\end{equation*} As discussed in the previous section, the residue theorem tells us that the bottom row splits, so that $\hGg(\Co)  = \Gg(\Co) \oplus k$ as Lie algebras. Invoking \Cref{lem:simpleLie}, we also note that this splitting is unique as otherwise we would have non-trivial maps between $\Gg(\Co)$ and $k$. This implies that if we further assume that \eqref{eq:sesGout} splits, inducing therefore an action of $\Gout$ on $\Hc(\vLambda)$, then the action of its derivative on $\Hc(\vLambda)$ coincides with the action of $\Gg(\Co)$. But note that a splitting of \eqref{eq:sesGout} is equivalent to an action of $\Gout$ on $\Lc_{\vLambda}$, which means that the line bundle $\Lc_{\vLambda}$ descends to a line bundle on $\Bun_\Gc$. But this is exactly one of the assumptions of \Cref{thm:sectionTCB} and thus we conclude.
\end{proof}

\begin{rmk} Let us assume now that the group $\Gc$ is of the form $\pi_*(G \times C)^\Gamma$ for some $\Gamma$-cover $\pi \colon C \to X$ (with $G$ simple and simply connected and $\Gamma$ preserving a Borel subgroup of $G$). Under these assumptions, in  \cite{hongkumar:2023} it is introduced the moduli stack ${\rm ParBun}_{\Gc}$ of $\Gc$-bundles with appropriate parabolic structures at $S$. When the level of $\vLambda$ is divisible by the order of $\Gamma$, it is further shown that there is a line bundle $\Lc_{\vLambda}$ on ${\rm ParBun}_{\Gc_\pi}$ and that there is a natural isomorphism $\VV^\dagger(\Gg; \vLambda)_{C \to X;S} \cong \Ho^0({\rm ParBun}_{\Gc}, \Lc_{\vLambda})$ \cite[Theorem 12.1]{hongkumar:2023}. 
Combining this result with \cite[Theorem 12.1]{hongkumar:2023}, one deduces that $\Ho^0({\rm ParBun}_{\Gc_\pi}, \Lc_1) = \Ho^0(\Bun_\Gc, \Lc_2)$ for $\Gc_\pi=\bar{\pi}(G \times C)^\Gamma$ and for appropriate line bundles $\Lc_i$ (see \Cref{subsec:gtfCB} for the notation). 
\end{rmk}

\subsection{Uniqueness of the cover of curve arising from \texorpdfstring{$\Gc$}{G}} \label{sec:uniqueness} We saw that one of the key ingredients used so far is the realization of $\Gc|_{\Co}$ as in \eqref{eq:Gout}. We describe here \textit{how unique} such description is.

Let $D$ denote the group of diagram automorphisms of $G$, namely the group of automorphisms of $G$ preserving $(B, T,\iota)$ where $T \subset B$ is the datum of a maximal torus of $G$, a Borel containing it and $\iota$ is a pinning with respect to $(B,T)$. Let $\Gamma \subseteq D$ and, for any étale $\Gamma$-cover $\pi \colon \Cto \to \Co$, we can simply write 
\[ \pi_*(G \times \Cto)^\Gamma \cong G \times^\Gamma \Cto.
\] In the following lemma, we write $\Iso$ to denote the group of isomorphisms of group schemes over $\Co$.

\begin{lemma}
\label{lem_tor_to_cover}
For every étale $\Gamma$ cover $\Cto \to \Co$, there is an isomorphism of curves
\[ G_{\ad}\, \backslash  \, \Iso\left(G \times \Co, G \times^\Gamma \Cto \right)\cong  D\times ^\Gamma \Cto \] induced by $\Aut(G)/G_\ad = D$ and which is compatible with the left actions of $D$. 
\end{lemma}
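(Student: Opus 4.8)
The plan is to unwind both sides of the claimed isomorphism as sheaves on $\Co$ and identify them via the pinning. Recall that $\Aut(G)$ fits into a split exact sequence $1 \to G_{\ad} \to \Aut(G) \to D \to 1$, the splitting being the inclusion of diagram automorphisms determined by the chosen pinning $\iota$. First I would observe that the inner form $G \times^\Gamma \Cto = \pi_*(G \times \Cto)^\Gamma$ is obtained by twisting the split group $G \times \Co$ by the $1$-cocycle $c_\pi \in \mathrm{H}^1(\Co_{\text{\'et}}, D)$ classifying the $\Gamma$-cover $\pi$ (via $\Gamma \to D$); concretely, $\Cto \to \Co$ is the $\Gamma$-torsor and $D \times^\Gamma \Cto$ is its pushout to a $D$-torsor, whose class is exactly $c_\pi$.

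Next I would identify the sheaf $\Iso(G\times\Co, G\times^\Gamma\Cto)$ of isomorphisms of group schemes over $\Co$. Since any isomorphism of reductive group schemes over a connected base, restricted to a point, is an element of $\Aut(G)$, this sheaf is a right torsor under $\underline{\Aut}(G)$ (the constant sheaf, as $G$ is split and everything is in characteristic zero) — or rather the torsor of trivializations of the twisted form, so it is the $\Aut(G)$-torsor whose class is the image of $c_\pi$ under $D \hookrightarrow \Aut(G)$. Now quotient on the left by $G_{\ad}$: because $G_{\ad} = \mathrm{Inn}(G)$ is normal in $\Aut(G)$ with quotient $D$, the quotient sheaf $G_{\ad}\backslash \Iso(G\times\Co, G\times^\Gamma\Cto)$ is the $D$-torsor obtained by pushing the $\Aut(G)$-torsor forward along $\Aut(G) \twoheadrightarrow D$. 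But pushing forward along $D \hookrightarrow \Aut(G) \twoheadrightarrow D = \mathrm{id}_D$ returns the original $D$-torsor of class $c_\pi$, which is $D\times^\Gamma\Cto$. This gives the desired isomorphism of sheaves, hence of schemes (both sides being finite étale over $\Co$, i.e.\ curves), and it is manifestly $D$-equivariant for the left $D$-action coming from $\Aut(G)/G_{\ad} = D$ acting on $\Iso$ by post-composition and from $D$ acting on $D\times^\Gamma\Cto$ on the left factor.

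Concretely, without cocycle language: an element of $\Iso(G\times\Co, G\times^\Gamma\Cto)$ over an \'etale open $U$ is determined, after choosing a point $\tilde u \in \Cto$ above $u \in U$, by an automorphism $\phi \in \Aut(G)$; changing $\tilde u$ by $\gamma \in \Gamma$ changes $\phi$ by $\gamma$ on one side, and the $G_{\ad}$-quotient kills precisely the inner part, so the class $[\phi] \in \Aut(G)/G_{\ad} = D$ together with the $\Gamma$-bookkeeping is exactly the datum of a point of $D\times^\Gamma\Cto$. I would write this map down explicitly and check it is well-defined, bijective on $U$-points functorially, and $D$-equivariant.

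The main obstacle I anticipate is purely bookkeeping: making the identification of the $\Aut(G)$-torsor structure on $\Iso(G\times\Co, G\times^\Gamma\Cto)$ precise and tracking the left- versus right-action conventions so that the final map is genuinely $D$-equivariant with the stated normalization (and so that the $G_{\ad}$ being quotiented on the \emph{left} matches $\mathrm{Inn}(G) \triangleleft \Aut(G)$ correctly). There is also a small point to verify — that every group-scheme isomorphism $G\times\Co \to G\times^\Gamma\Cto$ is \'etale-locally the "standard" one, i.e.\ that the sheaf $\Iso$ really is a torsor under the constant sheaf $\underline{\Aut}(G)$ and not something larger — but this follows from rigidity of reductive groups over a connected base in characteristic zero, since $G$ is split so $\underline{\mathrm{Aut}}(G\times\Co)$ is the constant group scheme $\Aut(G)_{\Co}$. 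None of these steps is deep; the content is organizational.
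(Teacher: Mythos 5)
Your proposal is correct and is essentially the paper's argument: both trivialize over the étale cover $\Cto \to \Co$ (where $\Iso$ becomes $\Aut(G)\times\Cto$), invoke $G_{\ad}\backslash\Aut(G)=D$, and then match descent data --- your pushforward of the $\Aut(G)$-torsor of class $\iota_*c_\pi$ along $\Aut(G)\twoheadrightarrow D$ is exactly the paper's explicit cocycle computation (right multiplication by $\gamma^{-1}$) written in nonabelian-$\Ho^1$ language. The only slip is terminological: $\underline{\Aut}(G\times\Co)$ is not a constant sheaf (its identity component $G_{\ad}$ is positive-dimensional) but merely the base change $\Aut(G)\times\Co$ of a group over $k$, which is all your argument actually uses.
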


\begin{proof} We show that there is a canonical isomorphism étale locally on $\Co$ and that this descends to $\Co$ itself. We note that
\begin{align*} \Iso\left(G \times {\Co},G \times^\Gamma \Cto \right) \times_{\Co} \Cto &= \Iso\left(G \times {\Cto} , (G \times^\Gamma \Cto) \times_{\Co} \Cto \right)\\
& = \Iso \left( G \times{\Cto}, (G \times \Cto \times_{\Co} \Cto) ^\Gamma \right) \\
& = \Iso \left( G \times{\Cto}, (G \times \sqcup_{\Gamma} \Cto)^\Gamma \right)\\
& = \Iso \left( G\times {\Cto}, G \times{\Cto}\right) = \Aut(G) \times {\Cto}.
\end{align*}
Similarly
\begin{align*}
    (D \times^\Gamma \Cto) \times_{\Co} \Cto  = (D \times \Cto \times_{\Co} \Cto)^\Gamma = D \times \Cto.
\end{align*}
Therefore, the isomorphism $G_{\ad}\, \backslash  \,\Aut(G) = D$ gives, étale locally, the wanted isomorphism. The cocycle that gives rise to a global map is the automorphism of \[\Hom_{\Cto \times_{\Co} \Cto}\left( \Aut(G) \times \Cto \times_{\Co} \Cto, D \times \Cto \times_{\Co} \Cto\right)\] which send the map
\[ (\sigma, c, d) \mapsto (\eta_{c,d}(\sigma), c, d) 
\] to the map 
\[ (\sigma, c, d) \mapsto (\eta_{c,d}(\sigma)\gamma^{-1} , c, d), 
\]  where $\gamma$ is the unique element of $\Gamma$ such that $d = \gamma(c)$. 
\end{proof}

Suppose now that there are two identifications 
\begin{equation*} \label{eq:spdeg} \Gc|_{\Co} \cong \pi_* \left(G \times \Cto\right)^\Gamma \qquad  \text{ and } \qquad  \Gc|_{\Co} \cong \pi'_* \left(G \times \Cto' \right)^{\Gamma'},  \end{equation*}
where $\pi \colon  \Cto\to \Co$ (resp. $\pi' \colon  \Cto'\to \Co$ ) is an \'etale $\Gamma$-cover (resp. $\Gamma'$-cover) for a finite subgroup $\Gamma$ (resp. $\Gamma'$) of $D$. Denote by $\Ct$ and $\Ct'$ the closures of $\Cto$ and $\Cto'$ respectively.

\begin{prop}
\label{prop_unique}
There exist an element $\delta\in D$ and an isomorphism $\phi \colon \Ct \to \Ct'$ over $X$ such that \[ \Gamma=\delta \Gamma' \delta^{-1} \qquad \text{ and } \qquad 
 \phi(\gamma\cdot p)= (\delta^{-1} \gamma \delta)\phi(p)\] for every $\gamma\in \Gamma$ and $p\in C$. 
\end{prop}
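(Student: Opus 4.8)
The plan is to leverage \Cref{lem_tor_to_cover}, which packages the data of the étale $\Gamma$-cover $\Cto \to \Co$ (together with $\Gamma \subseteq D$) into a single $D$-cover, namely $D \times^{\Gamma} \Cto$. The first step is to observe that both sides of the two given identifications of $\Gc|_{\Co}$ determine, via $\Cref{lem_tor_to_cover}$, the $D$-curve
\[ \Ec \coloneqq G_{\ad} \backslash \Iso\left(G \times \Co, \Gc|_{\Co}\right), \]
which depends only on $\Gc|_{\Co}$ and not on the chosen presentation. Indeed, applying the lemma to the first identification gives a $D$-equivariant isomorphism $\Ec \cong D \times^{\Gamma} \Cto$, and applying it to the second gives $\Ec \cong D \times^{\Gamma'} \Cto'$. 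Composing, we obtain a $D$-equivariant isomorphism $\psi \colon D \times^{\Gamma} \Cto \xrightarrow{\sim} D \times^{\Gamma'} \Cto'$ over $\Co$.

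The second step is to extract $(\delta, \phi)$ from $\psi$. Pick a connected component of $D \times^{\Gamma} \Cto$: the component containing the image of $\{1\} \times \Cto$ is a copy of $\Cto$, whose $D$-translates sweep out the whole curve, and whose stabilizer in $D$ is exactly $\Gamma$ (acting on $\Cto$ in the original way). Likewise, $D \times^{\Gamma'} \Cto'$ has the distinguished component $\Cto'$ with stabilizer $\Gamma'$. Now $\psi$ carries the component $\{1\}\times\Cto$ to \emph{some} component of $D \times^{\Gamma'} \Cto'$, which by $D$-homogeneity is $\delta \cdot \Cto'$ for a well-defined coset $\delta \Gamma' \in D/\Gamma'$; fix a representative $\delta \in D$. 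Then $\psi$ restricts to an isomorphism of curves $\Cto \to \delta\cdot\Cto'$; composing with the $D$-action of $\delta^{-1}$ identifies $\delta \cdot \Cto'$ with $\Cto'$, and we let $\phi_0 \colon \Cto \to \Cto'$ be the resulting isomorphism over $\Co$. Comparing stabilizers of the component and using $D$-equivariance of $\psi$ forces $\Gamma = \delta \Gamma' \delta^{-1}$; and unwinding the composition, $D$-equivariance of $\psi$ translates precisely into $\phi_0(\gamma \cdot p) = (\delta^{-1}\gamma\delta)\cdot \phi_0(p)$ for all $\gamma \in \Gamma$, $p \in \Cto$, since conjugation by $\delta^{-1}$ is exactly how the $\Gamma$-action on $\Cto$ corresponds to the $\Gamma$-action on $\delta\cdot\Cto'$ inside the homogeneous $D$-curve.

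The final step is to extend $\phi_0 \colon \Cto \to \Cto'$ over $\Co$ to an isomorphism $\phi \colon \Ct \to \Ct'$ over $X$. This is automatic: $\Ct$ and $\Ct'$ are, by definition, the smooth projective closures of the smooth affine curves $\Cto$ and $\Cto'$, so any isomorphism of the affine curves extends uniquely to an isomorphism of their smooth completions, and compatibility with the maps to $X$ (which are the completions of $\pi$ and $\pi'$) and with the respective $\Gamma$- and $\Gamma'$-actions follows by density of $\Cto$ in $\Ct$. The main obstacle in this argument is really the bookkeeping in the second step: one must check carefully that the coset $\delta\Gamma'$ is well-defined (independent of the choice of base component, or track how it changes) and that the equivariance identity for $\phi_0$ comes out with the conjugation on the correct side — both of which follow from the $D$-equivariance of $\psi$ but require writing out the homogeneous-space bookkeeping honestly. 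Everything else is formal.
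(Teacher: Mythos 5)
Your proposal is correct and follows essentially the same route as the paper: both apply \Cref{lem_tor_to_cover} to obtain a $D$-equivariant isomorphism $D\times^\Gamma \Cto \cong D\times^{\Gamma'}\Cto'$, read off $\delta$ from the component to which the distinguished copy of $\Cto$ is sent, and deduce $\Gamma=\delta\Gamma'\delta^{-1}$ and the twisted equivariance of $\phi$ from $D$-equivariance. The only (immaterial) difference is that the paper extends the $D$-equivariant isomorphism to the projective closures before extracting $\phi$, whereas you extend $\phi_0$ to $\Ct\to\Ct'$ at the end.
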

\begin{proof} Since by assumption
\[ G \times^\Gamma \Cto =\pi_*(G \times \Cto)^\Gamma \cong  \Gc \cong \pi'_*(G \times \Cto')^{\Gamma'}  = G \times^{\Gamma'} \Cto',  
\] we can use \Cref{lem_tor_to_cover} to induce an isomorphism of curves $\psi^\circ  \colon  D\times ^\Gamma \Cto \cong D\times ^{\Gamma' } \Cto'$ which is compatible with the left action of $D$. This naturally extends to a $D$-equivariant isomorphism
\[\psi \colon  D\times ^\Gamma C \cong D\times ^{\Gamma' } C'. \]
The isomorphism $\psi$ sends connected components to connected components and therefore it restricts to an isomorphism $\psi \colon  \Gamma\times^\Gamma C\to \delta\Gamma'\times^{\Gamma'}  C'$ for some element $\delta \in D$. By $D$-equivariance, the stabilizer groups of both components agree. Thus, we must have $\Gamma=\delta \Gamma' \delta^{-1}$.
 
By identifying $C\cong \Gamma\times^\Gamma C$ via $p\mapsto (1,p)$ and similarly $C'\cong \delta\Gamma'\times^{\Gamma'} C'$ via $p'\mapsto (\delta, p')$, the map $\psi$ defines the isomorphism $\phi \colon  C\to C'$ which satisfies $\phi(\gamma\cdot p)= (\delta^{-1} \gamma \delta)\phi(p)$ for every $\gamma\in \Gamma$, and $p\in C$.
\end{proof}

From the above proposition it follows that, to each parahoric Bruhat--Tits group $\Gc$ over $X$, the order of any group $\Gamma$ such that \eqref{eq:Gout} holds is an invariant that only depends on $\Gc$. We call such number the \bfem{generic splitting degree} of $\Gc$ and denote it by $\gsd(\Gc)$. Since $\Gamma$ is a subgroup of the group of diagram automorphisms of $G$, it follows that $\gsd(\Gc)$ takes values $1$, $2$, $3$ or $6$ only. Furthermore, observe that the splitting degree of each point (as defined in \Cref{sec:PRconj}) must necessarily divide $\gsd(\Gc)$. 

We immediately obtain the following corollary. 

\begin{cor} If $G$ is not of type $D_4$, then necessarily $\Gamma =\Gamma'$ and the isomorphism $\phi$ is $\Gamma$-equivariant. If $G$ is of type $D_4$ and $\Gamma$ has order three, then $\Gamma=\Gamma'$.
\end{cor}

We can use \Cref{prop_unique} to compare conformal blocks associated with different Lie algebras and representations. Using the covers $C$ and $C'$ above, we have the two sheaves of Lie algebras
\[
\Gg \coloneqq \pi_*(\g \otimes \Oc_\Ct)^\Gamma,\qquad \text{ and } \qquad   \Gg' \coloneqq \pi_*(\g \otimes \Oc_{\Ct'})^{\Gamma'} \] over $X$. Observe that although in general neither $\Gg$ nor $\Gg$ equal the Lie algebra of $\Gc$, their associated conformal blocks describe sections of line bundles over $\Bun_\Gc$ (see  \Cref{thm:sectionTCB}). Fix an element $\delta \in D$ and an isomorphism $\phi \colon C \to C'$ satisfying  \Cref{prop_unique}. This datum induces a Lie algebra isomorphism between $\Gg$ and $\Gg'$, and also a Lie algebra isomorphism between $\hGg_x=\hL(\g, \tau)$ and $\hGg'_x=\hL(\g,\tau'_x)=\hL(\g, \delta^{-1} \tau_x \delta)$. Therefore to every dominant weight $\Lambda_x$ of $\hGg_x$ there is a unique induced dominant weight of $\hGg'_x$ which we simply denote $\Lambda_x'$. Doing so for every $x \in S$, the pair $(\delta, \phi)$ associates to the collection of dominant weights $\vLambda$ a unique $\vLambda'$. One therefore deduces the following:

\begin{cor} Using the above notation, the pair $(\delta, \phi)$ induces an isomorphism
    \[F_{\delta,\phi} \colon \VV^\dagger(\Gg;\vLambda)_{C \to X; S}
\cong \VV^\dagger(\Gg',\vLambda')_{C' \to X; S}.\]
\end{cor}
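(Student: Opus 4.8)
The plan is to construct $F_{\delta,\phi}$ directly from the functoriality of the twisted conformal blocks construction under the isomorphism of data $(\Gamma, \pi, \g) \rightsquigarrow (\Gamma', \pi', \g)$ provided by \Cref{prop_unique}, and then to check that this construction lands inside the invariant subspaces on both sides. First I would record that the pair $(\delta, \phi)$ gives, for each $x \in S$, a Lie algebra isomorphism $\hGg_x = \hL(\g, \tau_x) \to \hL(\g, \delta^{-1}\tau_x\delta) = \hGg'_x$; concretely this is induced by the automorphism $\delta$ of $\g$ together with the identification of the local punctured disks of $C$ and $C'$ over $x$ coming from $\phi$ (note $\phi$ is an isomorphism over $X$, so it is compatible with the projections to $\Db_x^\times$). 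One must check that this isomorphism sends the central element to the central element and preserves the normalized invariant form $\langle\;|\;\rangle$ (it does, since $\delta$ acts on $\g$ by a diagram automorphism, hence preserves $\kappa$ and the highest root, and the residue term only depends on the order $|\sigma|$, which is unchanged because $\delta^{-1}\tau_x\delta$ has the same order as $\tau_x$). Hence this isomorphism carries the level-$\ell$ condition to itself and sends the irreducible integrable highest weight module $\Hc(\Lambda_x)$ to $\Hc(\Lambda_x')$, by definition of $\Lambda_x'$.

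Next I would assemble these local isomorphisms into an isomorphism of the global objects. Taking the quotient by the diagonal central $k$, one gets $\hGg_S \cong \hGg'_S$ and a compatible isomorphism $\Hc(\vLambda) \cong \Hc(\vLambda')$ (as vector spaces, intertwining the two actions). The remaining point is that this identification restricts to an isomorphism between $\Gg(\Co) \subseteq \hGg_S$ and $\Gg'(\Co) \subseteq \hGg'_S$: the pair $(\delta,\phi)$ induces a sheaf-of-Lie-algebras isomorphism $\Gg \cong \Gg'$ over $X$ — explicitly $\bar\pi_*(\g \otimes \Oc_C)^\Gamma \to \bar\pi'_*(\g \otimes \Oc_{C'})^{\Gamma'}$ sending a $\Gamma$-invariant section $s$ to $\delta \cdot (s \circ \phi^{-1})$, which is $\Gamma'$-invariant precisely because of the conjugation relation $\phi(\gamma \cdot p) = (\delta^{-1}\gamma\delta)\phi(p)$ — and this is compatible with the local embeddings $\Gg(\Db_x^\times) \hookrightarrow \hGg_S$ up to the (unique, by \Cref{lem:simpleLie}) splitting over $\Co$ guaranteed by the residue theorem. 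Dualizing, the isomorphism $\Hc(\vLambda)^* \cong \Hc(\vLambda')^*$ therefore matches $[\Hc(\vLambda)^*]^{\Gg(\Co)}$ with $[\Hc(\vLambda')^*]^{\Gg'(\Co)}$, which is exactly the desired $F_{\delta,\phi}$.

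The step I expect to be the main obstacle is the bookkeeping around the two central extensions: one must verify that the local isomorphisms $\hGg_x \cong \hGg'_x$ are compatible with the \emph{global} splitting over $\Co$, so that the diagonal-central quotients and the restrictions to $\Gg(\Co)$, $\Gg'(\Co)$ are matched consistently. Here the key leverage is uniqueness — since $\Gg(\Co)$ is perfect (\Cref{lem:simpleLie}), the splitting of $\hGg(\Co) \to \Gg(\Co)$ is unique, so any Lie algebra isomorphism carrying the residue pairing to itself automatically carries one splitting to the other, and there is nothing further to check. With that in hand the construction is functorial in $(\delta,\phi)$, and one could also remark (though it is not needed for the statement) that $F_{\delta,\phi}$ is independent of the choices in the sense made precise by \Cref{prop_unique}: a different admissible $(\delta,\phi)$ differs by an inner automorphism, which acts trivially on conformal blocks. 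I would keep the proof short, stating the isomorphism of data, noting preservation of the form and level, and invoking perfectness of $\Gg(\Co)$ for the uniqueness of the splitting, rather than writing out the cocycle computations in full.
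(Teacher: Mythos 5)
Your proof follows essentially the same route as the paper, which states the corollary as an immediate consequence of the preceding paragraph (the pair $(\delta,\phi)$ induces isomorphisms $\Gg \cong \Gg'$ and $\hGg_x \cong \hGg'_x$, hence matches weights, modules, and invariants); your added verifications — preservation of the normalized form and of the level, compatibility with the embeddings of $\Gg(\Co)$ and $\Gg'(\Co)$, and uniqueness of the splitting via perfectness of $\Gg(\Co)$ — are exactly the details the paper leaves implicit. One small convention quibble: given the equivariance $\phi(\gamma\cdot p) = (\delta^{-1}\gamma\delta)\phi(p)$ and $\Gamma' = \delta^{-1}\Gamma\delta$, the map on invariant sections should be $s \mapsto \delta^{-1}\cdot(s\circ\phi^{-1})$ rather than $\delta\cdot(s\circ\phi^{-1})$, but this does not affect the argument.
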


\section{Using twisted conformal blocks to control descent}  \label{sec:descent}
In this last section we show how non-vanishing of conformal blocks controls the descent of line bundles from $\GrGS$ to $\Bun_\Gc$. As a consequence, we show that indeed \eqref{eq:conj} holds true in a number of cases.

\subsection{Controlling descent}
We describe here a sufficient condition to ensure that a line bundle on $\GrGS$ descends to $\Bun_\Gc$ or, equivalently, has an $\Gout$-linearization. This is inspired by \cite[Section 3.3]{sorger:99:moduli}, where Sorger uses this result to study line bundles on the stack parametrizing $G$-bundles. As in \Cref{sec:PRconjProof}, we assume that $\Rc \subseteq S \neq \emptyset$. Let $\Gg$ be the Lie algebra over $X$ defined in \eqref{eq:Gg} via any $\Gamma$-cover $C \to X$ such that \eqref{eq:Gout} is satisfied.

\begin{theorem} \label{thm:criterion} Let $\Lc_{\vLambda} \in  \Pic^\Delta(\GrGS)$ be a dominant line bundle. If the space of twisted conformal blocks $\VV^\dagger(\Gg; \vLambda)_{C\to X;S}$ is non zero, then $\Lc_{\vLambda}$ descends to a unique line bundle on $\Bun_\Gc$. \end{theorem}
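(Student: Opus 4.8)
The plan is to deduce \Cref{thm:criterion} from the central-extension machinery already assembled in the proof of \Cref{thm:sectionTCB}. Recall that for $\Lc_{\vLambda} \in \Pic^\Delta(\GrGS)$ we used \Cref{lem:Faltings} to integrate $\Hc(\vLambda)$ to a projective representation $\rho_{\vLambda} \colon \LG_S \to \PGL(\Hc(\vLambda))$, obtaining by pullback the central extension $\widehat{\LG}_S(\vLambda)$ in \eqref{eq:sesLGS} and then, by restriction along $\Gout \to \LG_S$, the central extension $\widehat{\Gout}(\vLambda)$ in \eqref{eq:sesGout}. The key observation is that $\Lc_{\vLambda}$ descends to $\Bun_\Gc$ precisely when this latter extension splits, i.e.\ when there is a group-theoretic section $\Gout \to \widehat{\Gout}(\vLambda)$; indeed a splitting is the same datum as a $\Gout$-linearization of $\Lc_{\vLambda}$ on $\GrGS$, which by \eqref{eq:Heinloth} together with \Cref{prop:XGouttrivial} is equivalent to descent, and the uniqueness of the descended bundle is exactly the injectivity of $q_S^*$ from \Cref{prop:XGouttrivial}.

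So the first step is to reduce the theorem to the claim: \emph{if $\VV^\dagger(\Gg;\vLambda)_{C\to X;S} \neq 0$, then \eqref{eq:sesGout} splits.} The second step is the heart of the matter. A nonzero twisted conformal block is by definition a nonzero $\psi \in \Hc(\vLambda)^*$ annihilated by $\Gg(\Co) = \Ho^0(\Co, \Lie(\Gc))$. I would use this vector to build, or to obstruct the non-existence of, a section of $\widehat{\Gout}(\vLambda)$. The point is that $\Gout$ is an integral ind-group scheme (\Cref{lem:GoutIntegral}), so any character of $\Gout$ is trivial (\Cref{prop:XGouttrivial}); hence a splitting of \eqref{eq:sesGout}, if it exists, is unique, and the obstruction to its existence lives in an $\mathrm{H}^2$ which should vanish once we produce a ray in $\Hc(\vLambda)^*$ stable under the $\Gout$-action coming from $\widehat{\Gout}(\vLambda)$. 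Concretely: $\widehat{\Gout}(\vLambda)$ acts on $\Hc(\vLambda)$ (via $\widehat{\LG}_S(\vLambda) \hookrightarrow \GL(\Hc(\vLambda))$), and the line $k\psi \subset \Hc(\vLambda)^*$ is preserved by the Lie algebra $\Gg(\Co) = \hGg(\Co)/k$; because the curve $\Co$ is affine and $\Gout$ is integral with the already-established uniqueness of the splitting on the level of Lie algebras (from \Cref{lem:simpleLie}), the stabilizer of $k\psi$ in $\widehat{\Gout}(\vLambda)$ is a subgroup surjecting onto $\Gout$ with kernel $\Gm$, and the resulting $\Gm$-torsor over $\Gout$ is trivial since $\Gout$ has no nontrivial characters and (by integrality) no nontrivial $\Gm$-torsors that are group extensions splitting over the Lie algebra. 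This gives the desired section.

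The third step is bookkeeping: once \eqref{eq:sesGout} splits, the section defines a $\Gout$-linearization of $\Lc_{\vLambda}$, hence by \eqref{eq:Heinloth} a line bundle $\Lc$ on $\Bun_\Gc$ with $q_S^*\Lc \cong \Lc_{\vLambda}$, and \Cref{prop:XGouttrivial} gives uniqueness. I would also remark that this is genuinely a one-directional criterion: the converse (descent implies nonvanishing) is false in general, which is why the theorem is phrased as a sufficient condition and why, in \Cref{subsec:nonvanishing}, it suffices to exhibit one nonzero summand $\VV^\dagger_i$.

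The main obstacle I anticipate is the precise argument in the second step that a nonzero conformal block forces the \emph{group-level} extension \eqref{eq:sesGout} to split, not merely its Lie algebra (which already splits by the residue theorem). The subtlety is that a priori the line $k\psi$ is only infinitesimally stable, and one must integrate this to the ind-group $\Gout$; here the integrality of $\Gout$ from \cite{hongkumar:2023} (via \Cref{lem:GoutIntegral}) and the triviality of its characters (\Cref{prop:XGouttrivial}) are exactly what is needed, but the argument should be run carefully, presumably by following Sorger's strategy in \cite[Section 3.3]{sorger:99:moduli} adapted to the twisted, parahoric setting and to the central extension $\widehat{\Gout}(\vLambda)$ built above rather than the one associated to a single level-one bundle. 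One should check that the stabilizer of $k\psi$ is closed and of the right ``codimension'', and that the induced $\Gm$-bundle over $\Gout$ admits a group section — equivalently that the extension class in $\mathrm{Ext}^1_{\mathrm{gp}}(\Gout,\Gm)$ vanishes, which follows from $X^*(\Gout)=0$ together with integrality.
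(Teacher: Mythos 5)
Your proposal is correct and follows essentially the same route as the paper: reduce descent to a (necessarily unique, by \Cref{prop:XGouttrivial}) splitting of the central extension \eqref{eq:sesGout}, and obtain that splitting from a nonzero conformal block via Sorger's argument \cite[Proposition 3.3]{sorger:99:moduli}, using the integrality of $\Gout$ and the triviality of its characters. The only point to tighten is the mechanism in your second step: since $\widehat{\Gout}(\vLambda)$ is integral and its Lie algebra stabilizes the line $k\psi$, the stabilizer of that line is the \emph{whole} extension, which therefore acts on $k\psi$ by a character restricting to the identity on the central $\Gm$, and the kernel of this character is the desired section — rather than an appeal to vanishing of $\mathrm{H}^2$ or of $\Gm$-torsors.
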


\begin{proof} We can use \Cref{prop:XGouttrivial} to say that the line bundle $\Lc_{\vLambda}$ descents to $\Bun_\Gc$ if and only if $\Lc_{\vLambda}$ has an $\Gout$-linearization (which is necessarily unique).  As noted in the proof of \Cref{thm:sectionTCB}, this is equivalent to the existence of a splitting (again necessarily unique) of the exact sequence \eqref{eq:sesGout}. Finally, using an argument in \cite[Proposition 3.3]{sorger:99:moduli},  the splitting to \eqref{eq:sesGout} is ensured by the non-vanishing of the space 
\[ \Ho^0\left(\GrGS, \Lc_{\vLambda}\right)^{\Gout} \neq 0,
\]
and the fact that $L_{\mathring{X}}\Gc$ is integral (\Cref{lem:GoutIntegral}). 
We thus conclude using \Cref{thm:sectionTCB}, which identifies that space of sections with the space $\VV^\dagger(\vLambda)_{C \to X;S}$ of twisted conformal blocks.
\end{proof} 

\subsection{Non vanishing criteria} \label{subsec:nonvanishing}

We give in this section some criteria to detect non-vanishing of the space of conformal blocks $\VV^\dagger(\vLambda)_{C \to X;S}$. Combining these results with \Cref{lem:lowerBound} we can provide a more accurate estimate of $\cc_\Gc$ and also show that it coincides with $\cc_\Delta$ whenever all parahoric groups are Iwahori (see \Cref{thm:D}). 

When the group $\Gc$ is already given in terms of Galois covers of curves, one can directly use the results and examples of \cite{deshpande.mukhopadhyay:2019} and \cite{hongkumar:2024} to effectively apply \Cref{thm:criterion}. In fact, \cite[Theorem 1.2]{deshpande.mukhopadhyay:2019} gives the analogue of the Verlinde formula for twisted conformal blocks, extending  \cite{faltings:verlinde}. In theory, one may apply that formula to compute the dimension of twisted conformal blocks and immediately deduce non-vanishing. 

\begin{eg} \label{eg:b8} For instance, from  \cite[Example B.8]{deshpande.mukhopadhyay:2019} one has
\[ \dim \VV^\dagger(A_{2r-1}^{(2)}, \vLambda_o)_{C \to X; \Rc} = 2^g r^{g+n-1},
\] where $C \to X$ is a $2$-to-$1$ cover ramified over $\Rc=\{x_1, \dots, x_{2n}\}$ and where the Lie algebras $\hGg_{x_i}$ are all of type $A_{2r-1}^{(2)}$. Using \Cref{thm:criterion} it therefore follows that $\Lc_{\vLambda_o}$ descends to a line bundle on $\Bun_\Gc$. Note that in this situation $\cc_\Gc=\cc_\Delta=1$. \end{eg} 

In practice, however, applying the Verlinde formula can actually be complicated and, since we only care about whether $\VV^\dagger(\vLambda)_{C \to X;S}$ is zero or not, not always necessary. As shown in \Cref{thm:Iwa}, which together with \Cref{appthm:Iwa} they imply \Cref{thm:D}, we can indeed apply \Cref{thm:criterion} without computing the exact dimension of the appropriate space of conformal blocks. 

\begin{theorem} \label{thm:Iwa} Let $\Gc$ on $X$ be a parahoric Bruhat--Tits group of generic splitting degree different from $6$.  If for every $x \in S$, the group $\Pc_x=\Gc(\Db_x)$ is an Iwahori subgroup of $\Gc(\Db_x^\times)$, then $\cc_\Gc=1=\cc_\Delta$. 
\end{theorem}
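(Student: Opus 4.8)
The strategy is to verify the hypothesis of \Cref{thm:criterion} for the vacuum line bundle, i.e.\ to exhibit a nonzero twisted conformal block at level one. By \Cref{lem:lowerBound} we already know $\cc_\Delta \mid \cc_\Gc$, and when every $\Pc_x$ is Iwahori the subset $Y_x \subseteq \Ihx$ is \emph{all} of $\Iht$; hence $\gcd_{i \in Y_x}\cha_i = \gcd_{i \in \Iht}\cha_i = 1$ (the special node $o$ has $\cha_o = 1$), so $\cc_\Delta = 1$. It therefore remains to show $\cc_\Gc \le 1$, which amounts to producing a single dominant line bundle $\Lc_{\vLambda} \in \Pic^\Delta(\GrGS)$ of central charge $1$ that descends to $\Bun_\Gc$. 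The natural candidate is $\vLambda = \vLambda_o = (\Lambda_{o,x})_{x \in S}$, the vacuum weight at every point, which has central charge $1$ and lies in $\Pic^\Delta(\GrGS)$; by \Cref{thm:criterion} it suffices to prove $\VV^\dagger(\Gg;\vLambda_o)_{C \to X; S} \neq 0$.

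To establish non-vanishing I would first reduce to a case where $S = \Rc$ consists exactly of the non-split bad points: adding a split point $x \notin \Rc$ with vacuum weight corresponds (via propagation of vacua, cf.\ \cite{tuy, damiolini:2020:conformal}) to the same conformal block, so we may assume every point of $S$ is genuinely ramified in $\bar\pi \colon C \to X$. Next, I would use the decomposition of twisted conformal blocks into "simpler" pieces $\VV^\dagger_i$ alluded to in the introduction — this comes from the decomposition of $\g$ under the finite cyclic (or $S_3$) group $\Gamma$ and the corresponding factorization of the twisted affine Lie algebra data — so that it is enough to show one summand is nonzero. The cleanest way to get a nonzero summand is a dimension count: one can either invoke the twisted Verlinde formula of \cite[Theorem 1.2]{deshpande.mukhopadhyay:2019} (as in \Cref{eg:b8} and \Cref{eg:b8}) for the relevant twisted type $X_N^{(r)}$ with $r = \gsd(\Gc) \in \{1,2,3\}$, or argue more directly: at level one, the integrable highest-weight representation $\Hc(\Lambda_{o,x})$ of $\hL(\g,\sigma_x)$ is a basic representation, and the space of coinvariants $\Hc(\vLambda_o)_{\Gg(\Co)}$ surjects onto a tensor product of the one-dimensional top pieces (the "constant term"), which is nonzero because the finite-dimensional Lie algebra $\Gg(\Co)$ acts through a proper ideal on the relevant graded piece. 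Since $\gsd(\Gc) \neq 6$, the twisted affine type is one of $A_{2\ell}^{(2)}$, $A_{2\ell-1}^{(2)}$, $D_{\ell+1}^{(2)}$, $E_6^{(2)}$, $D_4^{(3)}$, or untwisted, and for each of these the level-one basic representation and its Verlinde numbers are classically known to give a nonzero answer.

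The main obstacle is the non-vanishing statement itself: unlike the untwisted case, one does not have an arbitrary-genus curve with only vacuum insertions reducing to a trivial computation, because the ramified cover $C \to X$ genuinely changes the geometry, and one must control the action of the (possibly large) finite-dimensional Lie algebra $\Gg(\Co) = (\g \otimes \Os(\Cto))^\Gamma$ on the basic representation. I expect the proof to handle this by passing to the associated graded with respect to the natural filtration on $\Hc(\vLambda_o)$ and identifying the lowest piece with functions/sections on $C$ that are $\Gamma$-equivariant, where a residue/degree argument forces the quotient to be nonzero; the case analysis on $\gsd(\Gc)$ enters precisely because the combinatorics of which node is "special" and the structure of $\Gamma$-fixed points differ. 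The excluded case $\gsd(\Gc) = 6$ (type $D_4^{(6)}$, i.e.\ $\Gamma = S_3$) is deferred to Appendix~\ref{appendix} because there the relevant twisted affine datum and its level-one representation theory require a separate, more delicate analysis, carried out there by Shuo Gao.
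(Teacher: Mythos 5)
Your setup is correct and matches the paper: $Y_x=\Ihx$ in the Iwahori case forces $\cc_\Delta=1$, and the whole theorem reduces, via \Cref{thm:criterion}, to showing $\VV^\dagger(\Gg;\vLambda_o)_{C\to X;S}\neq 0$ for the vacuum weights. The gap is in the non-vanishing argument itself, which is the entire content of the proof. Your ``direct'' argument --- that the coinvariants $\Hc(\vLambda_o)_{\Gg(\Co)}$ surject onto a tensor product of one-dimensional top pieces because $\Gg(\Co)$ ``acts through a proper ideal on the relevant graded piece'' --- begs the question: the difficulty is precisely to rule out that the (infinite-dimensional, not finite-dimensional as you write) Lie algebra $(\g\otimes\Os(\Cto))^\Gamma$ kills everything, and no residue/degree argument on the associated graded is supplied that would do this. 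The Verlinde-formula route you mention as an alternative is viable in principle but you do not execute it, and the paper deliberately avoids it as impractical. What the paper actually does is geometric: it degenerates the $\Gamma$-cover $C\to X$ to a nodal cover whose base is a union of projective lines (using the connectedness of the cyclic Hurwitz stack, via \cite[Theorem 4.7, Lemmas 4.2--4.3]{hongkumar:2024}, so that the dimension of $\VV^\dagger$ is preserved), then applies the Factorization Theorem to exhibit an explicit nonzero tensor-product summand built from two- and three-pointed base cases on $\PP^1$ --- namely $\VV^\dagger(\Gg;\Lambda_o,\Lambda_o)_{\PP^1\to\PP^1;0,\infty}$ for the $2$-to-$1$ and $3$-to-$1$ covers and $\VV^\dagger(\Lambda_o,\Lambda_o,\Lambda_o)_{E\to\PP^1;0,1,\infty}$ --- each of which is computed to be nonzero by citing \cite{Besson-Hong:2025,deshpande.mukhopadhyay:2019,beauville:96:verlinde}. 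For $\gsd(\Gc)=3$ there is in addition a combinatorial case analysis on how the branch points split by monodromy ($|\Rc_3^+|\equiv|\Rc_3^-|\pmod 3$) that dictates which degeneration to use; your sketch does not anticipate this.

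Two smaller inaccuracies: there is no twisted affine type ``$D_4^{(6)}$'' --- the local data at each point still has order $1$, $2$ or $3$ even when $\Gamma=\mathfrak{S}_3$ --- and the real reason $\gsd(\Gc)=6$ is deferred to the appendix is that the connectedness of the $\mathfrak{S}_3$-Hurwitz stack is not known, which breaks the degeneration step, not that the level-one representation theory is more delicate.
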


\begin{proof} Since all parahoric groups are Iwahori, the special vertex $o$ belongs to $\Ihx$ for every $x \in S$ and therefore the sum of weights
\[ \vLambda_o \coloneqq (\Lambda_o)_{x \in S}
\] defines a line bundle on $\GrGS$ which actually belongs to $\Pic^\Delta(\GrGS)$ and has central charge $1$. We are going to show that this line bundle descends to $\Bun_\Gc$ by showing that the associated space of twisted conformal blocks
\[\VV^\dagger \coloneq \VV^\dagger(\Gg; \vLambda_o)_{C \to X; S}\]
 is non zero.

 \smallskip

\textbf{$\gsd(\Gc)=1$:} Since the generic splitting degree of $\Gc$, it means that $\Gamma$ is trivial and so $\VV^\dagger$ coincides with the space of \textit{untwisted} conformal blocks $ \VV^\dagger(\g \otimes X, \vLambda_o)_{X; S}$. A standard method to check that this is non zero is to degenerate $\VV^\dagger$ (by degenerating the curve $X$) into a vector space of the same dimension but which naturally contains \[ \bigotimes_{x \in S} \VV^\dagger(\g, \Lambda_o)_{\PP^1; 0}.
\] All the components are one dimensional by either a direct computation or \cite[Corollary 4.4]{beauville:96:verlinde}. (One could directly compute the dimension of $\VV^\dagger$ using the Verlinde formula, but since we only need to show that this space is non-zero, we can reduce to a simpler computation.)

 \smallskip

\textbf{$\gsd(\Gc)=2$:} Here $\Ct \to X$ is a $\ZZ/2\ZZ$ cover which is ramified exactly over the non-split bad points $\Rc_{2}$, which necessarily have splitting degree $2$. By the Riemann--Hurwitz formula it follows that $|\Rc_{2}|$ is even, and call $x_1, \dots, x_{2\ell}$ its points. In view of \cite[Theorem 4.7]{hongkumar:2024}, and using propagation of vacua, the space of twisted conformal blocks $\VV^\dagger$ degenerates into a vector space of the same dimension, but which contains (at least one copy of) the space
\[ \bigotimes_{i=1}^\ell \VV^\dagger(\Gg; \Lambda_o, \Lambda_o)_{\PP^1 \overset{ \pi }{\longrightarrow} \PP^1; 0, \infty} \otimes \bigotimes_{x \in S \setminus \Rc_2} \VV^\dagger(\g; \Lambda_o)_{\PP^1;0},\]
where $\pi$ is the unique $2$-to-$1$ cover of $\PP^1$ ramified at $0$ and $\infty$. The space $\VV^\dagger(\Gg; \Lambda_o, \Lambda_o)_{\PP^1 \overset{ \pi }{\longrightarrow} \PP^1; 0, \infty}$ is one dimensional by \cite[Lemma 3.12]{Besson-Hong:2025} or \cite[Lemma 8.7]{deshpande.mukhopadhyay:2019}. The space $\VV^\dagger(\g; \Lambda_o)_{\PP^1;0}$ is also one dimensional as discussed in the previous case.  

 \smallskip

\textbf{$\gsd(\Gc)=3$:} Here $C \to X$ is a $\ZZ/3\ZZ$ cover which is ramified over the non-split points $\Rc_3$ which necessarily have splitting degree $3$.  Denote by $\gamma$ one generator of $\ZZ/3\ZZ$. The set $\Rc_3$ can therefore be partitioned into two sets $\Rc_3^+ \cup \Rc_3^-$ depending on the monodromy of $\gamma$ at those points. The étale cover $\Ct \to X$ induces a representation $\pi_1(X \setminus \Rc) \to \ZZ/3\ZZ$ and therefore we have that $|\Rc_3^+|$ and $|\Rc_3^-|$ agree modulo $3$. We have therefore the following three scenarios:
\begin{enumerate}[label=(\alph*)]
    \item \label{it:a} $|\Rc_3^+| \equiv_{3} 0$: in this case also $|\Rc_3|\equiv_3 0$ and we can partition $\Rc_3$ into triples of points all with the same monodromy;
    \item \label{it:b} $|\Rc_3^+| \equiv_{3} 1$: for every $x^\pm \in \Rc_3^\pm$, we can find a partition of $\Rc_3$ which consists of $\{x^+, x^-\}$ and triples of points all with the same monodromy;
    \item \label{it:c} $|\Rc_3^+| \equiv_{3} 2$: for every $x^\pm, y^\pm \in \Rc_3^\pm$, we can find a partition of $\Rc_3$ which consists of the sets $\{x^+, x^-\}$, $\{y^+, y^-\}$, and triples of points all with the same monodromy.
\end{enumerate}

In case \ref{it:a}, write $|\Rc_3^\pm|=3N^\pm$. It follows from \cite[Lemma 4.3]{hongkumar:2024} and propagation of vacua, that the space of twisted conformal blocks $\VV^\dagger$ degenerates into a vector space of the same dimension, but which contains (at least one copy of) the space
\[ \bigotimes_{i=1}^{N^+ + N^-} \VV^\dagger(\Gg; \Lambda_o, \Lambda_o, \Lambda_o)_{E \overset{q}{\longrightarrow} \PP^1; 0, 1, \infty} \otimes \bigotimes_{x \in S \setminus \Rc_3} \VV^\dagger(\Lambda_o)_{\PP^1; 0},
\] where $q$ is a $3$-to-$1$ cover of $\PP^1$ ramified exactly at $0$, $1$ and $\infty$, all of which have the same monodromy. It follows from \cite[Example B.10]{deshpande.mukhopadhyay:2019} that
\[\dim \left( \VV^\dagger(\Lambda_o, \Lambda_o, \Lambda_o)_{E \overset{q}{\longrightarrow} \PP^1; 0, 1, \infty} \right) = 2,\]
which, together with the already noted fact that $\VV^\dagger(\g; \Lambda_o)_{\PP^1;0} \cong k$, shows that also in this case the space of conformal blocks is non zero.

We conclude considering the case \ref{it:b}, while leaving \ref{it:c} to the reader as it proceeds similarly. Let $|\Rc_3^+|=1 + 3N^+$ and $|\Rc_3^-|=1+3N^-$. Then, again using \cite[Lemma 4.3]{hongkumar:2024} and propagation of vacua, $\VV^\dagger$ degenerates into a vector space of the same dimension and which contains (at least one copy of) the space
\[ \bigotimes_{i=1}^{N^+ + N^-} \VV^\dagger(\Gg; \Lambda_o, \Lambda_o, \Lambda_o)_{E \overset{q}{\longrightarrow} \PP^1; 0, 1, \infty} \otimes \VV^\dagger(\Gg;\Lambda_o, \Lambda_o)_{\PP^1 \overset{\pi}{\longrightarrow} \PP^1; 0, \infty} \otimes  \bigotimes_{x \in S \setminus \Rc_3} \VV^\dagger(\Lambda_o)_{\PP^1,0},
\] where $\pi$ is the only $3$-to-$1$ cover of $\PP^1$ ramified at $0$ and $\infty$ only.  To conclude, it is enough to prove that $\VV^\dagger(\Lambda_o, \Lambda_o)_{\PP^1 \overset{\pi}{\longrightarrow} \PP^1; 0, \infty}$ is non trivial and again \cite[Lemma 3.12]{Besson-Hong:2025} or \cite[Lemma 8.7]{deshpande.mukhopadhyay:2019} shows that it is in fact one dimensional. 
\end{proof}

We summarize here the main ideas behind the proof of \Cref{thm:Iwa}. The main strategy we use to show non-vanishing of $\VV^\dagger =\VV^\dagger(\Gg; \vLambda)_{C \to X; S}$ is to \textit{degenerate} this vector space into another vector space of the same dimension, but that \textit{decomposes} into a direct sum of simpler pieces (of which at least one of them can be shown to not be zero). We refer to \cite{damiolini:2020:conformal,
hongkumar:2023, deshpande.mukhopadhyay:2019, hongkumar:2024} for details.

This method arises from geometry: space of twisted conformal blocks define vector bundle on the space parametrizing $\Gamma$-covers of curves. By fixing appropriate invariants, this moduli space is connected and therefore the rank of the vector bundle of conformal blocks is constant. The degeneration of $\VV^\dagger$ mentioned in the proof of \Cref{thm:Iwa} stems from the degeneration of the $\Gamma$-cover $\Ct \to X$ to another one $\Ct' \to X'$, where $X'$ becomes a (nodal) rational curve. Therefore one has
\[ \dim (\VV^\dagger(\Gg; \vLambda)_{C \to X; S}) = \dim(\VV^\dagger(\Gg'; \vLambda)_{C' \to X'; S})=\dim({\VV^\dagger}'),
\] and so we are left to show that ${\VV^\dagger}'$ is non-zero. To do so, we use the \textit{Factorization Theorem}, which gives a natural decomposition of ${\VV^\dagger}'$ into a direct sum of twisted conformal blocks associated to $\Gamma$-covers of $\PP^1$. 

In the proof of \Cref{thm:Iwa} we saw that, when $\vLambda=\vLambda_o$ (which we could assume since all parahoric groups of $\Gc$ were assumed to be Iwahori), there is at least one component of ${\VV^\dagger}'$ that it is easily computable. However, for general $\vLambda$, this last computation can be quite involved and the degeneration requires further care. This depends on the fact that, although one reduces the computation of conformal blocks to those associated to $\Gamma$-covers $C \to \PP^1$, we cannot in general reduce the complexity of $C$  (see also \cite[Remark B.11]{deshpande.mukhopadhyay:2019}).  

\medskip

Following this idea, in \Cref{prop:lcmai} we are able to give a lower bound on $\cc_\Gc$ when $\gsd(\Gc)=2$ and without imposing that parahoric groups are Iwahori. A similar statement when $\gsd(\Gc)=3$ is possible, but we leave it to the reader.

Let $C \to X$ be a $2$-to-$1$ cover such that \eqref{eq:Gout} holds and denote by $\Rc_2$ the branch locus of $X$, which consists of bad points of $\Gc$ of splitting degree $2$. Let $\tau$ be the generator of the Galois group of $C \to X$, which we identify with a diagram automorphism of $G$. As already discussed in the proof of \Cref{thm:Iwa}, necessarily $|\Rc_2|$ is even. 
Without loss of generality, we also assume that $S_{ 1} \coloneqq S \setminus \Rc_2$ has also even cardinality. We therefore have $|\Rc_2|=2N$ and $|S_{1}| =2 M$. 

For $i \in \{1,\dots, 2N\}$, let $Y_i$ be the subset of $\Iht$ determined by the parahoric subgroup $\Gc(\Db_{x_i})$ of $G(k(\!(t)\!))^\tau$. For every $n\neq m \in \{1, \dots, 2N\}$, define
\[ P_{n,m} \coloneqq \{i \in \Iht \text{ such that }i \in Y_n \cap Y_m\} \subset \Iht.\]
Let $\Nf$ be any partition of $\{1, \dots, 2N\}$ into pairs $\{n_1, n_{2}\}, \dots, \{n_{2N-1}, n_{2N}\}$ and denote by $P_{\Nf}$ the product $P_{n_1, n_2} \times \dots \times P_{n_{2N-1}, n_{2N}}$. 

Similarly, for $i \in \{1,\dots, 2M\}$, let $Y_i$ be the subset of $\hat{I}$ determined by the parahoric subgroup $\Gc(\Db_{x_i})$ of $G(k(\!(z)\!))$. As before, for every $n\neq m \in \{1, \dots, 2M\}$, define
\[ Q_{n,m} \coloneqq \{i \in \hat{I} \text{ such that }i \in Y_n \text{ and } i^* \in Y_m\} \subset \widehat{I},\]
where $i^*=-w_0(i)$ is the image of $i$ under the action of the longest element of the Weyl group of $\g$.  Let $\Mf$ be any partition of $\{1, \dots, 2M\}$ into pairs $\{m_1, m_{2}\}, \dots, \{m_{2M-1}, m_{2M}\}$ and denote by $Q_{\Mf}$ the product $Q_{m_1, m_2} \times \dots \times Q_{m_{2M-1}, m_{2M}}$. 

\begin{prop}\label{prop:lcmai} For every partition $\Nf$ of $\{1, \dots, 2N\}$ into pairs, and for every partition $\Mf$ of $\{1, \dots, 2M\}$ into pairs $\cc_\Gc$ divides 
\[ C_{\Nf, \Mf} \coloneqq \underset{\substack{(i_1, \dots, i_N) \in P_{\Nf}\\ (j_1, \dots, j_M) \in Q_{\Mf}}}{\lcm} \left( \cha_{i_1}, \dots, \cha_{i_N}, \cha_{j_1} \dots, \cha_{j_M}\right).\]
\end{prop}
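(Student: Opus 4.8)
The plan is to follow the strategy of the proof of \Cref{thm:Iwa}: for each admissible choice of vertices we construct a dominant line bundle in $\Pic^\Delta(\GrGS)$ which descends to $\Bun_\Gc$ and has central charge equal to the relevant least common multiple, and then read off the divisibility statement from the exact sequence \eqref{Heinloth_es}. Since $\cc_\Gc$ will be shown to divide every such central charge, it will divide their least common multiple, which is $C_{\Nf,\Mf}$.

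Fix $(i_1,\dots,i_N)\in P_{\Nf}$ and $(j_1,\dots,j_M)\in Q_{\Mf}$, and set $c\coloneqq\lcm(\cha_{i_1},\dots,\cha_{i_N},\cha_{j_1},\dots,\cha_{j_M})$. Recall from \eqref{eq:ccGr} that $\cha_i$ is exactly the central charge of $\Lc_{\Lambda_i}$ on the corresponding affine Grassmannian, and that $\cha_{i^*}=\cha_i$ since $-w_0$ permutes the vertices of the affine diagram preserving the dual Kac labels. I would then define the weight $\vLambda$ by placing $\tfrac{c}{\cha_{i_k}}\Lambda_{i_k}$ at both $x_{n_{2k-1}}$ and $x_{n_{2k}}$, which is legitimate because $i_k\in Y_{n_{2k-1}}\cap Y_{n_{2k}}$ by definition of $P_{n,m}$, and by placing $\tfrac{c}{\cha_{j_\ell}}\Lambda_{j_\ell}$ at $x_{m_{2\ell-1}}$ and $\tfrac{c}{\cha_{j_\ell}}\Lambda_{j_\ell^*}$ at $x_{m_{2\ell}}$, which is legitimate because $j_\ell\in Y_{m_{2\ell-1}}$ and $j_\ell^*\in Y_{m_{2\ell}}$ by definition of $Q_{n,m}$. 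By construction every weight has central charge $c$, so $\Lc_{\vLambda}$ is a dominant line bundle lying in $\Pic^\Delta(\GrGS)$. (If $S\setminus\Rc_2$ has odd cardinality one first enlarges $S$ by a pair of split points carrying the weight $c\Lambda_o$; by propagation of vacua this affects neither the conformal blocks below nor $\cc_\Gc$, and $\cha_o=1$ leaves every $\lcm$ unchanged.)

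Next I would prove $\VV^\dagger(\Gg;\vLambda)_{C\to X;S}\neq 0$ by the degeneration argument used for $\gsd(\Gc)=2$ in \Cref{thm:Iwa}. Degenerating the $\ZZ/2\ZZ$-cover $C\to X$ to a cover of a nodal rational curve and using that the sheaf of twisted conformal blocks is locally free over the moduli of $\ZZ/2\ZZ$-covers, the dimension of $\VV^\dagger$ is unchanged; by the factorization theorem \cite[Theorem 4.7]{hongkumar:2024} together with propagation of vacua, the degenerate space contains a copy of
\[
\bigotimes_{k=1}^{N}\VV^\dagger\big(\Gg;\tfrac{c}{\cha_{i_k}}\Lambda_{i_k},\tfrac{c}{\cha_{i_k}}\Lambda_{i_k}\big)_{\PP^1\to\PP^1;\,0,\infty}\ \otimes\ \bigotimes_{\ell=1}^{M}\VV^\dagger\big(\g;\tfrac{c}{\cha_{j_\ell}}\Lambda_{j_\ell},\tfrac{c}{\cha_{j_\ell}}\Lambda_{j_\ell^*}\big)_{\PP^1;\,0,\infty},
\]
where the first factors use the double cover of $\PP^1$ ramified at $0$ and $\infty$ and the second use the trivial cover. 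Each second factor is one dimensional because $\tfrac{c}{\cha_{j_\ell}}\Lambda_{j_\ell^*}$ is the contragredient of $\tfrac{c}{\cha_{j_\ell}}\Lambda_{j_\ell}$ (fusion of a representation with its dual on $\PP^1$). Each first factor is one dimensional by \cite[Lemma 3.12]{Besson-Hong:2025} (or \cite[Lemma 8.7]{deshpande.mukhopadhyay:2019}): the argument given there for the weight $\Lambda_o$ applies to an arbitrary dominant weight, since every integrable highest weight module of a twisted affine Lie algebra is self-contragredient — indeed the fixed subalgebra $\g^\tau$ is of type $B$, $C$, $F_4$ or $G_2$, all of which satisfy $-w_0=\mathrm{id}$. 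Hence $\VV^\dagger(\Gg;\vLambda)_{C\to X;S}\neq 0$, so by \Cref{thm:criterion} the line bundle $\Lc_{\vLambda}$ descends to a line bundle $\Lc$ on $\Bun_\Gc$, necessarily of central charge $c$; by \eqref{Heinloth_es} this gives $c\in\cc_\Gc\ZZ$, i.e. $\cc_\Gc\mid c$. Letting $(i_1,\dots,i_N)$ and $(j_1,\dots,j_M)$ range over $P_{\Nf}\times Q_{\Mf}$ yields $\cc_\Gc\mid C_{\Nf,\Mf}$.

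I expect the main obstacle to be the degeneration/factorization step: one must verify that the $\ZZ/2\ZZ$-cover degenerates inside a family with constant discrete invariants, so that the rank of the conformal-block sheaf is preserved, and that the factorization theorem indeed produces the displayed tensor decomposition with exactly the prescribed weights at the prescribed pairs of points. The one genuinely new representation-theoretic ingredient beyond \Cref{thm:Iwa} is the extension of \cite[Lemma 3.12]{Besson-Hong:2025} from the vacuum weight $\Lambda_o$ to an arbitrary dominant weight, which, as noted, reduces to the self-duality of modules over twisted affine Lie algebras; the rest of the argument is bookkeeping with central charges.
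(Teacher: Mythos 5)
Your proposal follows essentially the same route as the paper's proof: the same assignment of weights $\tfrac{c}{\cha_{i}}\Lambda_{i}$ (resp.\ the pair $\Lambda_{j},\Lambda_{j}^{*}$ at split points), the same degeneration via \cite[Theorem 4.7]{hongkumar:2024} and propagation of vacua, the same reduction to one-dimensional two-pointed blocks on $\PP^1$, and the same appeal to \Cref{thm:criterion} and \eqref{Heinloth_es}. The only (harmless, in fact slightly sharper) deviation is that you run the argument separately for each tuple with $c$ the lcm of that tuple's labels, rather than once with $C_{\Nf,\Mf}$ as the paper does.
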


\begin{proof} 
Let $ \vLambda_{\Nf,\Mf}$ be the collection of affine dominant weights attached to $S$, which assigns the pair of points indexed by $i_n$ the dominant weight $\dfrac{C_{\Nf, \Mf}}{\cha_{i_n}}$ for $1\leq n\leq N$, and assigns the pair of points indexed by $j_m$ the dominant weight $\dfrac{C_{\Nf, \Mf}}{\cha_{j_m}}$ for $1\leq m \leq M$.  It is enough to show that the line bundle on $\GrGS$ corresponding to $ \vLambda_{\Nf,\Mf}$ descends to $\Bun_\Gc$. And again, as before, it is enough to show that the associated space of conformal blocks $\VV^\dagger_{\Nf,\Mf} \coloneqq \VV^\dagger(\Gg, \vLambda_{\Nf,\Mf})_{C \to X; S}$ is non zero. To simplify notation, write  \[\Nf_n \coloneqq \dfrac{C_{\Nf, \Mf}}{\cha_{i_n}} \qquad \text{ and }\qquad \Mf_m \coloneqq \dfrac{C_{\Nf, \Mf}}{\cha_{j_n}}.\]
In view of \cite[Theorem 4.7]{hongkumar:2024}, and using propagation of vacua, the space $\VV^\dagger_{\Nf,\Mf}$ degenerates into a vector space of the same dimension, but containing (at least one copy of) the space
\begin{equation} \label{eq:degCB} \bigotimes_{n=1}^N \VV^\dagger(\Gg; \Nf_n\Lambda_{i_n}, \Nf_n\Lambda_{i_n})_{\PP^1 \overset{\pi}{\longrightarrow} \PP^1;0,\infty} \otimes \bigotimes_{m=1}^{M} \VV^\dagger(\g; \Mf_m \Lambda_{j_m}, \Mf_m \Lambda_{j_m}^*)_{\PP^1;0,\infty},
\end{equation} where $\pi$ is the $2$-to-$1$-cover of $\PP^1$ ramified exactly at $0$ and at $\infty$.
Combining \cite[Lemma 8.7]{deshpande.mukhopadhyay:2019} and \cite[Lemma 4.4]{beauville:96:verlinde}, we again deduce that the space \eqref{eq:degCB} is one dimensional and therefore $\VV^\dagger_{\Nf,\Mf}$ is non zero. \end{proof}

\begin{eg} \label{eg:covering} Let $\Gamma$ be  a subgroup of $D$, the group of diagram automorphism of a simply-connected simple algebraic group $G$. Given a $\Gamma$-cover $\pi \colon  C\to X$, we can define the parahoric Bruhat--Tits group scheme $\Gc \coloneqq \pi_*(G \times C)^\Gamma$ over $X$.  By the exact sequence \eqref{Heinloth_es} determined in \cite{heinloth:2010:uniformization},  we always have $\Pic(\Bun_{\Gc})= \cc_\Gc \ZZ$.  
Let $\Rc$ be the branch locus of $\pi$ in $X$ and observe that $\Rc$ is exactly the set of bad points of $\Gc$. Note that for every $x \in \Rc$, the group $\Gc(\Db_x)$ is a special parahoric subgroup of $\Gc(\Db_x^\times)$.

Suppose that $|\Gamma|$ is $2$. Applying \Cref{prop:lcmai}---and enlarging $S$ if necessary---we can deduce that $c_\Gc=1$ whenever $G \neq A_{2n}$ or if $\Rc$ is empty; otherwise,  $c_{\Gc}=2$.

Suppose that $|\Gamma|$ is $3$, in which case $G$ is of type $D_4$. Using the same arguments as in the proof of \Cref{thm:Iwa}, we  have $c_{\Gc}=1$. \end{eg}

\newpage

\appendix

\section{Generic splitting degree 6}
\label{appendix}

\centerline{\sl by Shuo Gao}

\medskip


\newcommand{\bbar}[1]{\bar{\bar{#1}}}

\newcommand{\ra}{\rightarrow}
\newcommand{\la}{\leftarrow}
\newcommand{\La}{\Leftarrow}
\newcommand{\Ra}{\Rightarrow}
\newcommand{\lra}{\leftrightarrow}
\newcommand{\Lra}{\Leftrightarrow}

\newcommand{\mfA}{\mathfrak{A}}
\newcommand{\mfS}{\mathfrak{S}}
\newcommand{\VVd}{\mathbb{V}^\dagger}

\newcommand{\rank}{\mathrm{rk}}

\newcommand{\ibid}{\textit{ibid.}}
\newcommand{\Iwanum}{\Cref{thm:Iwa}} 
\newcommand{\sectDes}{\Cref{sec:descent}}

In this appendix we show that indeed the conclusion of {\Iwanum} holds also for generic splitting degree 6, concluding the proof of \Cref{thm:D} from the introduction.

\begin{theorem}
\label{appthm:Iwa}
    Let $\Gc$ on $X$ be a parahoric Bruhat--Tits group of generic splitting degree $6$. If for every $x \in S$ the group $\Pc_x=\Gc(\Db_x)$ is an Iwahori subgroup of $\Gc(\Db_x^\times)$, then $\cc_\Gc = 1 = \cc_\Delta$.
\end{theorem}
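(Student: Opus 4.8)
The plan is to follow the strategy of \Cref{thm:Iwa}. Since every $\Pc_x$ is an Iwahori subgroup, the special affine vertex $o$ lies in $Y_x$ for each $x\in S$, so $\vLambda_o\coloneqq(\Lambda_o)_{x\in S}$ is a dominant line bundle in $\Pic^\Delta(\GrGS)$ of central charge $1$; by \Cref{thm:criterion} it then suffices to prove that the vacuum twisted conformal block $\VV^\dagger\coloneqq\VV^\dagger(\Gg;\vLambda_o)_{C\to X;S}$ is non-zero, since this forces $\cc_\Gc=1$, and the Iwahori hypothesis also gives $\cc_\Delta=1$ (each $\gcd_{i\in Y_x}\cha_i$ divides $\cha_o=1$), so that $\cc_\Gc=1=\cc_\Delta$. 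Because $\gsd(\Gc)=6$, the group $\Gamma$ is the full group $S_3$ of diagram automorphisms of $G$, so $G$ is of type $D_4$; the branch points of $\bar\pi\colon C\to X$ are the non-split bad points of $\Gc$, at each of which the inertia is cyclic of order $2$ or $3$, giving $\Rc=\Rc_2\sqcup\Rc_3$, and composing the monodromy $\pi_1(\Co)\to S_3$ with the sign character $S_3\to\ZZ/2\ZZ$ shows that $|\Rc_2|$ is even, exactly as when $\gsd(\Gc)=2$.

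Next I would degenerate. As recalled after the proof of \Cref{thm:Iwa}, the rank of the bundle of twisted conformal blocks is constant over the moduli of $S_3$-covers of fixed numerical type (which is connected), so $C\to X$ together with $S$ degenerates to an admissible $S_3$-cover $C'\to X'$ with $X'$ a maximally degenerate rational curve, each of whose components is a $\PP^1$ carrying exactly three special points (branch points from $\Rc$, or nodes). By the factorization theorem (\cite[\S 4]{hongkumar:2024}, \cite[\S 8]{deshpande.mukhopadhyay:2019}), $\dim\VV^\dagger$ equals a sum, over assignments of dominant weights of central charge $1$ to the nodes, of products of three-pointed twisted conformal blocks on $\PP^1$; placing the vacuum $\Lambda_o$ at every node and using propagation of vacua, it is enough to exhibit one such degeneration all of whose factors are non-zero. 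A short inspection of the triples $(g_1,g_2,g_3)$ with $g_1g_2g_3=1$ in $S_3$ shows that a three-pointed $S_3$-cover of $\PP^1$ is either (i) disconnected, hence induced from a $\ZZ/2\ZZ$- or $\ZZ/3\ZZ$-subgroup, so that its block reduces to a building block already treated for $\gsd(\Gc)\in\{2,3\}$ in \Cref{thm:Iwa} or to an untwisted block $\VV^\dagger(\g;\Lambda_o)_{\PP^1;0}$, all of dimension one or two by \cite[Lemma 3.12]{Besson-Hong:2025}, \cite[Lemma 8.7, Example B.10]{deshpande.mukhopadhyay:2019} and \cite[Corollary 4.4]{beauville:96:verlinde}; or (ii) connected, in which case its monodromy type is necessarily $(2,2,3)$ and it is the genus-zero cover $\PP^1\to\PP^1/S_3=\PP^1$ ($S_3\subset\PGL_2$ being the anharmonic group). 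A case analysis on the monodromy — parallel to, but heavier than, the one used when $\gsd(\Gc)=3$, and organizing how the points of $\Rc_2$ and $\Rc_3$ get distributed among components — reduces every factor to type (i) or to a three-pointed block for the $(2,2,3)$-cover of $\PP^1$ whose insertions have central charge $1$, with at least one insertion (all of them, on components containing no node) equal to the vacuum $\Lambda_o$.

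It remains to supply the one genuinely new input: for $\g$ of type $D_4$, the $(2,2,3)$-cover of $\PP^1$ has non-zero three-pointed twisted conformal blocks at central charge $1$, the essential case being the all-vacuum block $\VV^\dagger(\Gg;\Lambda_o,\Lambda_o,\Lambda_o)$ for that cover, from which the remaining variants follow by one further application of factorization. I would establish this by an explicit computation at central charge $1$, either through the twisted Verlinde formula \cite[Theorem 1.2]{deshpande.mukhopadhyay:2019} applied to the twisted affine algebras $D_4^{(2)}$ and $D_4^{(3)}$ and their vacuum dominant weights, or by exhibiting a non-zero coinvariant directly in the basic representations. This is the main obstacle: in contrast with the $\ZZ/2\ZZ$- and $\ZZ/3\ZZ$-building blocks of \Cref{thm:Iwa}, all three marked points of the $(2,2,3)$-cover are ramification points, so propagation of vacua offers no reduction and the dimension has to be computed by hand for the $S_3$-action on $D_4$. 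Once this is done, the chosen degeneration produces a non-zero summand of the factorized $\VV^\dagger$, whence $\VV^\dagger\neq 0$, and \Cref{thm:criterion} completes the proof that $\cc_\Gc=1=\cc_\Delta$.
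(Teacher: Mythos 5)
Your overall strategy coincides with the one used in the appendix: reduce to showing $\VV^\dagger(\Gg;\vLambda_o)_{C\to X;S}\neq 0$ via \Cref{thm:criterion}, degenerate the $S_3$-cover and factorize into building blocks over $\PP^1$, dispose of the blocks with purely cyclic monodromy by the $\gsd(\Gc)\in\{2,3\}$ arguments of \Cref{thm:Iwa}, and treat the genuinely non-abelian block by the twisted Verlinde formula. There are, however, two gaps. First, you assert that the moduli of $S_3$-covers of fixed numerical type is connected; the paper explicitly states that connectedness of this Hurwitz stack is unknown and instead invokes \cite[Remark 1.4]{deshpande.mukhopadhyay:2019}, which guarantees that the dimension of twisted conformal blocks is the same on all connected components --- without this (or a proof of connectedness) your degeneration step does not preserve the dimension. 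Relatedly, your plan to degenerate all the way down to three-pointed components forces nodes with non-trivial stabilizer (e.g.\ splitting the configuration $((12),(23),(123),(123))$ requires a node with $3$-cycle monodromy), whereas the degeneration lemmas proved in the appendix only produce free nodal $\Gamma$-orbits; this is why the paper retains a four-pointed base case with ramification data $((12),(23),(123),(123))$ and computes it directly rather than splitting it further.

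Second, and more seriously, the decisive new input --- non-vanishing of the all-vacuum block for the $(2,2,3)$-cover $\PP^1\to\PP^1$ with $\g$ of type $D_4$ --- is exactly the point you leave unproved (``I would establish this by an explicit computation''). A priori this block could vanish, and then the whole argument collapses; its non-vanishing is not formal. The appendix carries the computation out: at level $1$ the only $S_3$-fixed weight is $0$, and one needs the values $S_{0,0}=\tfrac12$, $S^{(12)}_{0,0}=\tfrac{1}{\sqrt2}$ and $S^{(123)}_{0,0}=1$, the last of which requires a positivity argument via \cite[Remark 10.4]{deshpande.mukhopadhyay:2019} since the relation $(S^{(123)}_{0,0})^3=2\,S_{0,0}=1$ alone only determines it up to a cube root of unity; these yield ranks $1$ and $2$ for the two non-cyclic base cases. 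Until you supply these values (or a direct coinvariant computation in the basic representations), the proof is incomplete.
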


Recall the method summarized after the proof of theorem \Iwanum.
In the cyclic cover case, we can use \cite[\S 4]{hongkumar:2024} and factorization of coinvariants to reduce our calculation to a few simple base cases. Then, invoking \Cref{thm:criterion}, we are left to compute that the dimensions of the spaces of conformal blocks in the base cases are all non-zero. 
What remains to show is the $\mfS_3$ cover case of generic splitting degree 6. It is unknown to the authors whether the Hurwitz stack (of a fixed ramification data) for $\mfS_3$ is connected or not, so alas we could not degenerate this cover directly. We could fix this by \cite[Remark 1.4]{deshpande.mukhopadhyay:2019}, which tells us that the dimension of the conformal blocks associated to points of the Hurwitz stack is equal across all the connected components.
With this in mind, our analogous lemmas to \cite{hongkumar:2024} will be \textit{a priori} weaker.

\subsection{Reduction Lemmas}
For each $n \in\ZZ_{>1}$, let $C_n :=<\zeta_n|\zeta^n_n>\cong \ZZ/n\ZZ$ denote the cyclic group of order $n$ with generator $\zeta_n$. We can develop (weaker) analogues of Lemma 4.2 and 4.3 of \cite{hongkumar:2024} for any semidirect product of cyclic groups $\Gamma=C_n\rtimes C_m$ with $n,m$ coprime, where we denote the action of $C_m$ on $C_n$ by ``$\cdot$'' (i.e. $\zeta_m\cdot \zeta_n:=\zeta_m\zeta_n\zeta_m^{-1}$). Much of the proof is a modification of \cite{hongkumar:2024}.

\begin{lemma}
\label{lem:analogue4.2}
    Let $(\Sigma, \vec{p})$ be an irreducible $s$-pointed smooth $\Gamma$-curve with ramification data $\vec{\gamma}$ such that $\Gamma\cdot \vec{p}$ contains all the ramified points in $\Sigma$. Assume that the quotient $\bbar{\Sigma}=\Sigma/\Gamma$ has genus $\bar{\bar{g}} \geq 1$ (in particular, $(\Sigma, \vec{p})$ is a stable $s$-pointed $\Gamma$-curve).
    Then, we can construct a stable $s$-pointed $\Gamma$-curve $(\Sigma'', \vec{p}'')$ (in particular, $\Sigma''$ is connected) such that the nodal points of $\Sigma''$ form a single $\Gamma$-orbit $\Gamma\cdot y$, the action of $\Gamma$ on $\Gamma\cdot y$ is free and the quotient  $(\Sigma''/\Gamma, \vec{\bbar{p}}'')$ is a degeneration of the stable $s$-pointed curve $(\Sigma/\Gamma, \vec{\bbar{p}})$.
    Moreover, $\Gamma\cdot \vec{p}''$ contains all the ramified points of $\Sigma''$.
    
    If $\bar{\bar{g}}\geq 2$ or if $\bar{\bar{g}}= 1$ and $\{\gamma_1,...,\gamma_s\}$ generate $\Gamma$, then $\Sigma''$ can be taken to be irreducible.
    In any case, $\Sigma''/\Gamma$ is irreducible and hence we can take $\vec{p}''$ to lie in an irreducible component of $\Sigma''$.
\end{lemma}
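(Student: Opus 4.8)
The plan is to mimic the genus-reduction argument of \cite[Lemma 4.2]{hongkumar:2024}, the new ingredient being that, because the Hurwitz stack for $\Gamma=C_n\rtimes C_m$ need not be connected, we are free to land in whatever component is convenient — a harmlessness guaranteed by \cite[Remark 1.4]{deshpande.mukhopadhyay:2019}. I would first pass to the topological model: the smooth $s$-pointed $\Gamma$-curve $(\Sigma,\vec p)$ with quotient $\bbar\Sigma$ of genus $\bar{\bar g}$ and branch locus $B\subseteq\{\bbar p_1,\dots,\bbar p_s\}$ is the datum of a surjection $\rho\colon\pi_1(\bbar\Sigma\setminus B)\twoheadrightarrow\Gamma$ carrying the loop around $\bbar p_i$ to a representative of the class $\gamma_i$ (so $\gamma_i=1$ at the unramified marked points), subject to the surface relation $\prod_{j=1}^{\bar{\bar g}}[\alpha_j,\beta_j]\prod_i\gamma_i=1$ for a standard symplectic system of handle generators $\alpha_j,\beta_j$. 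In these terms, producing $(\Sigma'',\vec p'')$ amounts to choosing a non-separating simple closed curve $\ell$ on $\bbar\Sigma$, disjoint from $B$ and from $\vec{\bbar p}$, with $\rho([\ell])=1$; pinching $\ell$ in a one-parameter family to get an irreducible nodal $\bbar\Sigma''$ of geometric genus $\bar{\bar g}-1$ with one node; and prescribing a gluing element $h\in\Gamma$ identifying the two fibres over the branches $y^\pm$ of the node $\Gamma$-equivariantly. Since $\rho([\ell])=1$, the cover is étale over the node, so the preimage of that node in $\Sigma''$ is automatically a free $\Gamma$-orbit $\Gamma\cdot y$ of $|\Gamma|$ nodes; the marked points persist (they avoid $\ell$) and the ramification data is unchanged, so $\Gamma\cdot\vec p''$ still meets every component of the ramification locus of $\Sigma''$.

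The crux is producing $\ell$, and this is where the case distinction enters. When $\bar{\bar g}\geq 2$ the surface relation and the surjectivity of $\rho$ only constrain $\alpha_2,\beta_2,\dots$ and the $\gamma_i$; hence, after a product of Dehn twists (which merely permutes the generating system, possibly changing the Hurwitz component) one may assume $\rho(\alpha_1)=1$ and take $\ell$ to represent $\alpha_1$. The cover of the normalization $\widetilde{\bbar\Sigma''}$ is then still classified by a surjection, so $\Sigma''$ is irreducible for any admissible $h$. When $\bar{\bar g}=1$ there is a single handle and one cannot in general force $\rho(\alpha_1)=1$; here one uses that $\langle\rho(\alpha_1),\rho(\beta_1),\gamma_1,\dots,\gamma_s\rangle=\Gamma$, so if $\{\gamma_1,\dots,\gamma_s\}$ already generate $\Gamma$ one can, via point-pushing and half-twist moves along the branch points, slide the monodromy of a non-separating curve into a word in the loops around the $\bbar p_i$ and so produce $\ell$ with $\rho([\ell])=1$ while keeping the cover of the rational normalization rich enough that, with a suitable $h$, $\Sigma''$ is connected, hence irreducible (as $\bbar\Sigma''$ already is). If $\{\gamma_i\}$ do \emph{not} generate $\Gamma$, I would instead take $\ell$ with $\rho([\ell])=1$ at the price of letting the normalization $\widetilde{\Sigma''}$ be disconnected, and choose $h$ so that it bridges the components; then $\Sigma''$ is connected but reducible, while $\bbar\Sigma''$ is still the irreducible nodal curve from the first step.

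It then remains to do the bookkeeping. That $(\Sigma'',\vec p'')$ is a \emph{stable} $s$-pointed $\Gamma$-curve reduces, by the standard Hurwitz dictionary, to stability of $(\bbar\Sigma'',\vec{\bbar p}'')$; since $\bar{\bar g}\geq 1$, the single non-separating pinch leaves every component of positive genus or carrying enough special points, possibly after sprouting a rational bridge, which affects none of the asserted properties. That $(\bbar\Sigma'',\vec{\bbar p}'')$ is a degeneration of $(\bbar\Sigma,\vec{\bbar p})$ is clear from irreducibility of $\overline{\mathcal M}_{\bar{\bar g},s}$, which puts it in the closure of $(\bbar\Sigma,\vec{\bbar p})$. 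Irreducibility of $\bbar\Sigma''$ is built into the construction, and irreducibility of $\Sigma''$ holds exactly in the cases isolated above. Finally, $\Gamma\cdot\vec p''$ contains all ramified points because the ramification data was transported unchanged.

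The main obstacle is the choice of $\ell$ in the case $\bar{\bar g}=1$: one must check that the elementary moves on the branch points available in the (possibly disconnected) Hurwitz picture genuinely trivialise the monodromy of some non-separating curve — equivalently, that the prescribed ramification classes admit a product of representatives equal to $1$ — and that the gluing element $h$ can simultaneously be chosen to make $\Sigma''$ connected. This is precisely where the statement is \emph{a priori} weaker than \cite[Lemma 4.2]{hongkumar:2024}, and where \cite[Remark 1.4]{deshpande.mukhopadhyay:2019} is invoked: it lets us pass freely between connected components of the Hurwitz stack — and, in the eventual application to \Cref{appthm:Iwa}, adjoin auxiliary ramified points of vacuum weight by propagation of vacua — without disturbing the dimension count for which the lemma is designed.
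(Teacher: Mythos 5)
Your proposal takes a genuinely different route from the paper, and it has a real gap at its central step. The paper never tries to transport or deform the original monodromy representation: it builds a \emph{new} cover of the degenerate base from scratch, in two stages through the intermediate quotient $\bar{\Sigma}=\Sigma/C_n$. First it runs the cyclic argument of \cite[Lemma 4.2]{hongkumar:2024} to produce a $C_m$-cover $\bar\Sigma'$ of the one-nodal irreducible degeneration $C'$, and then it writes down an explicit $C_m$-equivariant homomorphism $f''\colon\pi_1(\bar U)\to C_n$ (prescribing its values on handle generators and on the loops around marked points) to get a $C_m$-equivariant $C_n$-cover on top; connectedness of the glued curve comes from the coprimality of $n$ and $m$. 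The semidirect-product structure is doing essential work: because each successive quotient is \emph{abelian}, the single surface relation $\prod_j[a_j,b_j]\prod_i g_i=1$ is automatically satisfiable on the lower-genus normalization (the commutators die and the product of the cyclic ramification data is already known to be the identity). Your proof never uses this structure, which is a warning sign given the paper's own remark that the construction requires the consecutive quotients to be cyclic.

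The gap in your argument is the existence of the curve $\ell$. For $\bar{\bar g}\geq 2$ you assert that ``after a product of Dehn twists one may assume $\rho(\alpha_1)=1$,'' and for $\bar{\bar g}=1$ you invoke point-pushing and half-twist moves to ``slide the monodromy'' into the kernel. Neither claim is justified, and both are transitivity statements about the mapping-class-group action on Hurwitz data --- exactly the kind of statement the appendix explicitly says is unknown for $\mathfrak{S}_3$ (connectedness of the Hurwitz stack). Appealing to \cite[Remark 1.4]{deshpande.mukhopadhyay:2019} does not rescue this: that remark lets you compare dimensions across components of the Hurwitz stack, but it does not produce the degenerate cover for you. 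Even granting total freedom to change components, you must still exhibit \emph{some} homomorphism $\pi_1\bigl(\widetilde{\bbar\Sigma}''\setminus\{\bbar p_i,\bbar x^\pm\}\bigr)\to\Gamma$ with local monodromy in the class $\gamma_i$ at each $\bbar p_i$ and trivial monodromy at $\bbar x^\pm$, i.e.\ solve $\prod_{j=1}^{\bar{\bar g}-1}[a_j,b_j]\,g_1\cdots g_s=1$ with $g_i\sim\gamma_i$ on a curve of genus one less --- and your proposal identifies this as ``the main obstacle'' without resolving it. This is precisely what the paper's two-step cyclic tower accomplishes. Your fallback of allowing a disconnected cover of the normalization and bridging components with a gluing element $h$ is the right idea for connectedness of $\Sigma''$ (it matches the paper's identification $\gamma^i\cdot y^+\sim\gamma^{i+1}\cdot y^-$), but without an explicit homomorphism in hand you have not verified that the ramification data is preserved or that the gluing is $\Gamma$-equivariant.
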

\begin{proof}
    By assumption $\Sigma\ra \bbar{\Sigma}$ is a $\Gamma$-cover, so we know $C_n \trianglelefteq \Gamma$ acts on $\Sigma$, and we could consider the (categorical/GIT) quotient $\bar{\Sigma} = \Sigma/C_n$.
    As we also know that $\Sigma\ra \bbar{\Sigma}$ is $\Gamma$-invariant, we know $\Sigma\ra \bbar{\Sigma}$ factors through $\Sigma\ra \bar{\Sigma}$. The $\Gamma$-action on $\Sigma$ then descends to a $C_m=\Gamma/C_n$-action on $\bar{\Sigma}$.
    It is clear that both $\Sigma\ra\bar{\Sigma}$ and $\bar{\Sigma}\ra\bbar{\Sigma}$ satisfy the definition of \textit{admissible} in \cite[Appendix A]{deshpande.mukhopadhyay:2019} because there is no node on $\Sigma$ and $\bbar{\Sigma}$.
    We denote the pointed stable curves by $(\bar{\Sigma},\vec{\bar{p}})$ and $(\bbar{\Sigma},\vec{\bbar{p}})$ with genus $\bar{g},\bbar{g}$ respectively and $s,r$ marked points respectively.
    By construction, this intermediate cover separates ramification data within $C_n$ and ramification data within $\Gamma/C_n\cong C_m$.
    We denote the ramification data for $\Sigma\ra \bar{\Sigma}$ and $\bar{\Sigma}\ra\bbar{\Sigma}$ by $\vec{\bar{\gamma}}$ and $\vec{\bbar{\gamma}}$ respectively.
    
    Let $(C',\vec{\bar{\bar{p}}}')$ be a stable degeneration of $(\bbar{\Sigma}, \vec{\bar{\bar{p}}})$ with $C'$ irreducible and with one single node $\bbar{x}$ (which is possible since $\bbar{g}\geq 1$).
    Let $\tilde{C}'$ be the normalization of $C'$ with $\bar{\bar{x}}^+,\bar{\bar{x}}^-$ over $\bar{\bar{x}}$.
    Then $\tilde{C}'$ is smooth and irreducible of genus $\bar{\bar{g}}-1$.
    Let $\bbar{U}$ be the complement
    \[C'\setminus[\bbar{x},\bbar{p}'_1,\cdots, \bbar{p}'_s] = \tilde{C}'\setminus [\bbar{x}^+,\bbar{x}^-,\bbar{p}'_1,\cdots, \bbar{p}'_s].\]
    We first give an overview of the proof for cyclic cover from \cite{hongkumar:2024}, because both the original Lemma 4.2 and its proof is used in this proof.
    In \cite{hongkumar:2024}, the proof to construct a cyclic covering for the degeneration nodal curve is in the following three steps.
    \begin{enumerate}
        \item We first construct a $\Gamma$-bundle on the complement $U$ of the nodal point and all the marked points. To construct such a $\Gamma$-bundle is same as to construct a group homomorphism $\pi_1(U)\ra \Gamma$.
        \item  We then take the unique smooth projective closure of the ambient variety of the bundle to get a cyclic cover of the normalization of the degeneration, ramified only at the marked points.
        \item We finally identify pairs of pre-images of the nodal point according to $\Gamma$ to get a covering which is etale over the nodal point and is a connected stable cover when the additional requirement is met.
    \end{enumerate}
    
    In our proof here, we first carry out all the above steps to the covering $\bar{\Sigma}\ra \bbar{\Sigma}$, obtaining a $C_m$-covering $\bar{\Sigma}'\ra C'$.
    We then apply a $C_m$-equivariant version of step (1) to an appropriate open $\bar{U}$ of $\bar{\Sigma}'$; 
    by $C_m$-equivariant version of step (1) we mean that we construct a $C_m$-equivariant $C_n$-bundle which is the same as to construct a $C_m$-equivariant group homomorphism $\pi_1(\bar{U})\ra C_n$, with the $C_m$-acion on $C_n$ given by conjugation in $\Gamma$. 
    We then use step (2) and (3) again to get a $\Gamma$-covering of the degeneration stable curve.
    This $C_m$-equivariance provides an accurate $\Gamma$-bundle for the composition of two cyclic coverings and represents the main modification to \cite{hongkumar:2024}.

    As in the proof of \cite[Lemma 4.2]{hongkumar:2024}, we can define a group homomorphism $f' \colon \pi_1(\bbar{U})\ra C_m$
    which gives a $C_m$-bundle $\tilde{\bbar{U}}\ra \bbar{U}$ with $\tilde{\bbar{U}}$ a smooth curve.
    We then get a smooth $s$-pointed $C_m$-cover $\pi: \bar{\Sigma}_{f'} \ra \tilde{C}'$ with marked points $\vec{\bbar{p}}'$, such that the ramification data at $\vec{\bbar{p}}'$ is $\vec{\bbar{\gamma}} = (\bbar{\gamma}_1,\cdots, \bbar{\gamma}_s)$ and the ramification data above $\bbar{x}^\pm$ is trivial; note that $\bar{\Sigma}_{f'}$ is not necessarily connected.
    Let $y^\pm$ be points above $\bbar{x}^\pm$ respectively, chosen so that $y^+$ and $y^-$ are in the same component of the curve $\bar{\Sigma}_{f'}$. Thus, $\pi^{-1}(\bbar{x}^+) = \{\zeta_m^i\cdot y^+ | 0 \leq  i \leq m - 1\}$ and $\pi^{-1}(\bbar{x}^-) = \{\zeta_m^i\cdot y^- | 0 \leq  i \leq m - 1\}$ are free $C_m$-orbits.
    By identifying $\gamma^i\cdot y^+$ and $\gamma^{i+1}\cdot y^-$, for each $0\leq i\leq m-1$, we get a stable (in particular connected) curve $\bar{\Sigma}'$ from $\bar{\Sigma}_{f'}$ whose quotient by $C_m$ is exactly $C'$.
    
    Furthermore, we want to construct a $C_m$-equivariant $C_n$-cover over $\bar{\Sigma}'$, where the $C_m$ action on $C_n$ is given by the semi-direct product structure (or the conjugacy action of $C_m$ on $C_n$).
    We start by constructing a $C_m$-equivariant $C_n$-bundle over the complement $\bar{U}$ given by
    \begin{align*}
 \bar{U} \coloneqq \bar{\Sigma}_{f'}&\setminus[(\zeta_m^i\cdot \bar{y}^+)_{i=0}^{m-1}, (\zeta_m^i\cdot \bar{y}^-)_{i=0}^{m-1}, C_m\cdot\bar{p}'_1,\cdots, C_m\cdot\bar{p}'_s]&\\
    =\bar{\Sigma}'&\setminus [(\overline{\zeta_m^i\cdot y^+})_{i=0}^{m-1}, C_m\cdot \bar{p}_1',\cdots ,C_m\cdot \bar{p}'_s]&,
    \end{align*}
    where $\overline{\zeta_m^i\cdot y^+}$ means the image of $\zeta_m^i\cdot y^+$ in $\bar{\Sigma}'$ after the identification.
    Note that $C_m\cdot\vec{\bar{p}}'$ now contains all the possible ramification points.    
    Similar to \cite{hongkumar:2024}, $\pi_1(\bar{U})$ has the presentation 
    \begin{align*}
     \pi_1(\bar{U}) = <(\bar{\alpha}_i),(\bar{\beta}_i),(\delta^+_j),(\delta^-_j),&C_m\cdot\bar{\eta}_1,\cdots,C_m\cdot\bar{\eta}_s|\\(\prod_i [\alpha_i,\beta_i])
     &(\prod_j\delta^+_j)(\prod_j \delta^-_j)
     (\prod_j(\prod_{i=1}^s(\zeta_m^j\cdot\bar{\eta}_i)) =1>,
     \end{align*}
    where $\zeta_m^j\cdot \bar{\eta}_i$ represents the loop around $\zeta_m^j\cdot\bar{p}''_i$, $\delta^\pm_j$ represent the loops around $\zeta_m^j\cdot y^\pm$ and $\bar{\alpha}_i,\bar{\beta}_i$ represent loops around each handle of $\Sigma_{f'}$.
    Note that the action of $C_m$ on $\bar{U}$ induces an action of $C_m$ on $\pi_1(\bar{U})$, and in particular this action will permute the generators $\{\bar{\alpha}_i,\bar{\beta}_i\}$.
    By swapping the labels $\alpha_i$ with $\beta_i$ if necessary, we can assume $C_m$ sends $\alpha$'s to $\alpha$'s, and $\beta$'s to $\beta$'s, or that $C_m$ permutes the pairs $\{(\bar{\alpha}_i,\bar{\beta}_i)\}$.
    We now construct a $C_m$-equivariant group homomorphism $f'':\pi_1(\bar{U})\ra C_n$ such that
    \begin{itemize}
        \item $f''(\zeta_m^j\cdot\bar{\eta}_i) = \zeta_m^j\cdot\bar{\gamma}_i$ for any $1\leq i\leq s$ and for any $0\leq j< m$,
        \item $f''(\delta^\pm_i) = 1$, and
        \item within each $C_m$-orbit $\{(\zeta_m^i \bar{\alpha}_j,\zeta_m^i \bar{\beta}_j)\}$, we let $f''(\bar{\alpha}_j)=\zeta_n,f''(\bar{\beta}_j) = \zeta_n^{-1}$
        and define the value of the rest via the $C_m$-action, i.e. 
        \[f''(\zeta_m^i\bar{\alpha}_j) = \zeta_m^i\cdot f''(\bar{\alpha}_j)=\zeta_m^i\cdot \zeta_n,
        \quad f''(\zeta_m^i\bar{\beta}_k) = \zeta_m^i\cdot \zeta_n^{-1}.\]
    \end{itemize}
    Recall from the proof of \cite[lemma 4.2]{hongkumar:2024} that multiplying all the cyclic ramification data produces the identity.
    We thus have $(\prod_j(\prod_{i=1}^s(\zeta_m^j\cdot\bar{\gamma}_i))= 1$ for cyclic cover $\Sigma\ra \bar{\Sigma}$, so $f''$ is indeed a group homomorphism. 
    We can verify that $C_m$ acts equivariantly on the generators;
    by construction, we have
    \[f''(\zeta_m\cdot \bar{\eta}_i) = \zeta_m \cdot \bar{\gamma}_i, \quad f''(\zeta_m\cdot \alpha_i)= \zeta_m\cdot f''(\alpha_i) \text{ and } f''(\zeta_m\cdot \beta_i)= \zeta_m\cdot f''(\beta_i);\]
    $C_m$ also permutes $\delta^\pm_i$, but $\zeta_m\cdot f''(\delta_i^\pm)=1=f''(\zeta_m\cdot \delta^\pm_i)$.
    This gives rise to a $C_m$-equivariant $C_n$-bundle $\tilde{\bar{U}}\ra \bar{U}$ with $\tilde{\bar{U}}$ again a smooth (but not necessarily connected) curve.
    By taking the unique smooth projective closure $\Sigma_{f''}\supset \tilde{\bar{U}}$, we get a smooth $s$-pointed $C_n$-cover $\pi:\Sigma_{f''}\ra \bar{\Sigma}_{f'}$, and the ramification data above all the $\zeta_m^i\cdot \bar{y}^\pm$'s are trivial.
    Let $z^\pm$ be points above $\bar{y}^\pm$ respectively, chosen so that $z^+$ and $z^-$ are in the same component of the curve $\Sigma_{f''}$; this is possible because $\bar{y}^\pm$ are in the same connected component of $\bar{\Sigma}_{f'}$.
    Note that we have an action of $C_m$ on $\Sigma_{f''}$. We thus identify points $\zeta_n^i\cdot (\zeta_m^i\cdot z^+)$ and $\zeta_n^{i+1}\cdot(\zeta_n^{i+1}\cdot z^-)$ on $\Sigma_{f''}$ to form $\Sigma''$;
    these nodes after such an identification will form the preimage of nodes of $\bar{\Sigma}'$
    because $\zeta_m^i\cdot y^+$ and $\zeta_m^{i+1}\cdot y^-$ are identified to form the node of $\bar{\Sigma}'$.
    Thus $\Sigma''\ra \bar{\Sigma}'$ is also a $C_m$-equivariant $C_n$-bundle.
    
    Because $n$ and $m$ are coprime to each other, sequence $(\zeta_n^i\zeta_m^i)_{i=1}^{mn}$ enumerates every element of $\Gamma$ even though $\zeta_n\zeta_m$ does not generate $\Gamma$ unless $\Gamma = C_n\times C_m$. This ensures that the curve $\Sigma''$ is connected.     
    We therefore get a stable (in particular connected) $s$-pointed $C_n$-curve $\Sigma''$ whose quotient by $C_n$ is exactly $\bar{\Sigma}'$, and whose further quotient by $C_m$ is $C'$.
    The fact that $\Sigma''\ra \bar{\Sigma}'$ is $C_m$ equivariant ensures that the quotient of $\Sigma''$ by $\Gamma$ is exactly $C'$.
    \end{proof}

\begin{rmk}
    This proof utilizes the ideas of the proof of \cite[Lemma 4.2]{hongkumar:2024} and could potentially work for any solvable group $G$ such that the consecutive quotient of the derived sequence $G^{(n-1)}/G^{(n)}$ is cyclic (later referred as a cyclic solvable group).
    This is a constructive proof so we need the consecutive quotient to be cyclic in order to construct the bundle over the open complement of the nodal and marked points.
    We choose our setting here because we only need the result for $\mfS_3$.
    
    We only use the fact that the order of the two cyclic groups are coprime in step (3) to ensure the connectedness of our cover over the nodal curve when the requirement is met. To ensure connectedness for a cyclic solvable group $G$ covering, we need to make similar assumptions that the orders of the cyclic consecutive quotients need to be pairwise coprime. 
\end{rmk}

\begin{lemma}
\label{lem:analogue4.3}
    Let $(\Sigma, \vec{p})$ be a stable $s$-pointed (irreducible) smooth $\Gamma$-cover of $(\PP^1, \vec{p})$ (in particular, $s \geq 3$) such that $\Gamma\cdot\vec{p}$ contains all the ramified points in $\Sigma$ and has ramification data $ \vec{\gamma}= (\gamma_1,..., \gamma_s)$.
    Suppose that $\gamma_1\gamma_2\cdots \gamma_t = 1$ for all pairs $i,j\in \{1,\cdots,t\}$ with both $t, s-t \geq 2$. 
    Then, we can construct a stable $s$-pointed $\Gamma$-curve $(\Sigma', \vec{p}')$ whose quotient is a union of two projective lines intersecting at a point $x$, such that above one projective line the ramification data is $(\gamma_1,...,\gamma_t)$, and above another projective line the ramification data is $(\gamma_{t+1},...,\gamma_s)$ (which makes this quotient a degeneration of $(\PP^1,\vec{p})$).
    Moreover, the fiber over $x$ is a free $\Gamma$-orbit consisting of all the  nodal points of $\Sigma'$.
    Further, $\Gamma\cdot\vec{p}'$ contains all the ramified points of $\Sigma'$.
    
    If $\{\gamma_1,...,\gamma_s\}$ generate $C_n$ or a conjugate of $C_m$, then the curve over the first projective line can be taken to be irreducible.
\end{lemma}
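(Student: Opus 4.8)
The plan is to imitate the proof of \Cref{lem:analogue4.2}: factor the $\Gamma$-cover through its $C_n$-quotient, degenerate the resulting $C_m$-cover of $\PP^1$ by the cyclic case already available in \cite{hongkumar:2024}, and then rebuild the $C_n$-part on top in a $C_m$-equivariant way. Since $C_n\trianglelefteq\Gamma$, the curve $\bar{\Sigma}:=\Sigma/C_n$ is a smooth $C_m$-cover of $\PP^1$ and $\Sigma\ra\PP^1$ factors as a $C_n$-cover $\Sigma\ra\bar{\Sigma}$ followed by a $C_m$-cover $\bar{\Sigma}\ra\PP^1$, both admissible in the sense of \cite[Appendix A]{deshpande.mukhopadhyay:2019} since there are no nodes. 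This separates the ramification data $\vec{\gamma}$ into a $C_n$-part $\vec{\bar{\gamma}}$, the ramification of $\Sigma\ra\bar{\Sigma}$ above the preimages of the marked points, and a $C_m$-part $\vec{\bbar{\gamma}}$, where $\bbar{\gamma}_i$ is the image of $\gamma_i$ in $\Gamma/C_n\cong C_m$; in particular $\bbar{\gamma}_1\cdots\bbar{\gamma}_t$ is the image of $\gamma_1\cdots\gamma_t=1$, hence trivial.

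First I would degenerate the $C_m$-cover. Applying the cyclic analogue \cite[Lemma 4.3]{hongkumar:2024} to $\bar{\Sigma}\ra\PP^1$ with the splitting $\{1,\dots,t\}\sqcup\{t+1,\dots,s\}$ (legitimate since $t,s-t\ge 2$ and $\bbar{\gamma}_1\cdots\bbar{\gamma}_t=1$) produces a stable $s$-pointed $C_m$-curve $\bar{\Sigma}'\ra C'$ with $C'=\PP^1\cup_x\PP^1$, where $\bar{\Sigma}'$ is connected, the fibre over $x$ is a single free $C_m$-orbit $C_m\cdot\bar{y}$ consisting of all the nodes of $\bar{\Sigma}'$, and the ramification over the two components is $(\bbar{\gamma}_1,\dots,\bbar{\gamma}_t)$ and $(\bbar{\gamma}_{t+1},\dots,\bbar{\gamma}_s)$. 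Next I would construct a $C_m$-equivariant $C_n$-cover of $\bar{\Sigma}'$: writing $\bar{U}$ for the complement in $\bar{\Sigma}'$ of the preimages of $x$ and of the marked points, I build a $C_m$-equivariant homomorphism $f''\colon\pi_1(\bar{U})\ra C_n$ exactly as in the proof of \Cref{lem:analogue4.2}, prescribing $f''$ on the loops around the marked points so as to realise $\vec{\bar{\gamma}}$, setting $f''$ trivial on the loops around the preimages of $x$, and fixing $f''$ on one representative in each $C_m$-orbit of handle generators and extending by the conjugation action of $C_m$ on $C_n$; then I pass to the smooth projective closure to obtain a $C_n$-cover $\Sigma_{f''}\ra\bar{\Sigma}'$ étale over $C_m\cdot\bar{y}$.

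The last step is the gluing. Identifying $\zeta_n^i\cdot(\zeta_m^i\cdot z^+)$ with $\zeta_n^{i+1}\cdot(\zeta_m^{i+1}\cdot z^-)$ over the nodes of $\bar{\Sigma}'$ produces a curve $\Sigma'$ with $\Sigma'/C_n=\bar{\Sigma}'$, and the $C_m$-equivariance of $\Sigma_{f''}\ra\bar{\Sigma}'$ endows $\Sigma'$ with a $\Gamma$-action for which $\Sigma'/\Gamma=C'$; because $\gcd(n,m)=1$ the sequence $(\zeta_n^i\zeta_m^i)_{i=1}^{nm}$ runs through all of $\Gamma$, which forces the fibre of $\Sigma'\ra C'$ over $x$ to be a single free $\Gamma$-orbit and, combined with connectedness of $\bar{\Sigma}'$, forces $\Sigma'$ to be connected; since the only ramification of $\Sigma'\ra C'$ is over the marked points, $(\Sigma',\vec{p}')$ is a stable $s$-pointed $\Gamma$-curve of the required shape with $\Gamma\cdot\vec{p}'$ containing all its ramified points. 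For the final assertion, in either of the two listed cases one of the two stages is unramified over the first component of $C'$ --- if $\{\gamma_1,\dots,\gamma_s\}$ generate $C_n$ then $\bar{\Sigma}\ra\PP^1$ is unramified, while if they generate a conjugate of $C_m$ then $\Sigma\ra\bar{\Sigma}$ is unramified, the orders being coprime --- and the data $(\gamma_1,\dots,\gamma_t)$ together with a suitable choice of the handle values of $f''$ then generate the relevant group, so that the piece of $\Sigma'$ over the first $\PP^1$ is connected, hence irreducible.

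I expect the heart of the matter to be the $C_m$-equivariant construction of $f''$. One must first identify correctly the $C_n$-ramification elements of $\Sigma\ra\bar{\Sigma}$ lying above each marked point of $C'$: above the $m/|\langle\bbar{\gamma}_i\rangle|$ preimages of that point one obtains conjugates, by powers of $\zeta_m$, of the $C_n$-component of $\gamma_i$, dictated by the semidirect-product arithmetic. One then has to check that this prescription respects the surface relation in $\pi_1(\bar{U})$ on each component of $\bar{\Sigma}'$; the verification that the product of the prescribed local monodromies is trivial amounts precisely to reading off the $C_n$-component of the identity $\gamma_1\cdots\gamma_t=1$, but extracting it requires carrying the conjugations through the presentation of $\pi_1(\bar{U})$, which is exactly the bookkeeping flagged in the remark following \Cref{lem:analogue4.2}.
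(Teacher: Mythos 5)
Your proposal transplants the entire two--stage machinery of \Cref{lem:analogue4.2} (quotient by $C_n$, degenerate the $C_m$-cover via the cyclic case, rebuild a $C_m$-equivariant $C_n$-cover on top), but this machinery is only needed there because the base has genus $\geq 1$: handle generators of $\pi_1$ must be assigned images in $\Gamma$, and the cyclic/solvable structure is what makes that possible. Here the base is $\PP^1$ degenerating into two $\PP^1$'s, so on each component $\pi_1$ of the complement of the marked points and the node is free on the boundary loops modulo the single relation that their product is trivial. One can therefore define the monodromy homomorphism to $\Gamma$ \emph{directly}, sending $\eta_i\mapsto\gamma_i$ on the first component and assigning the node loop the inverse product $(\gamma_1\cdots\gamma_t)^{-1}=1$; no property of $\Gamma$ is used. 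This is exactly the paper's proof, which simply observes that the construction of \cite[Lemma 4.3]{hongkumar:2024} never invokes cyclicity.

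Beyond the unnecessary complexity, your route has a real soft spot that you yourself flag but do not close. The components of $\bar{\Sigma}'$ lying over each $\PP^1$ can have positive genus, and since $C_n$ is abelian the commutators of the handle generators die in $C_n$; hence $f''$ is a homomorphism on such a component only if the \emph{twisted} product of the prescribed local $C_n$-monodromies (conjugates by powers of $\zeta_m$ of the $C_n$-components $a_i$ of $\gamma_i=a_ib_i$, taken in the order dictated by the surface relation) is exactly $1$. This does not follow from $\gamma_1\cdots\gamma_t=1$ by inspection --- it is the identity $a_1(b_1\cdot a_2)(b_1b_2\cdot a_3)\cdots=1$ propagated through the presentation of $\pi_1(\bar U)$ component by component --- and deferring it as ``bookkeeping'' leaves the proof incomplete, since this is the only point where anything could go wrong. (The cleanest way to see that it does work out is to note that the direct $\Gamma$-cover of each $\PP^1$ component exists, which is precisely the paper's argument and makes the detour moot.) Your treatment of the final irreducibility clause is likewise looser than it should be: the hypothesis concerns $\{\gamma_1,\dots,\gamma_s\}$ generating $C_n$ or a conjugate of $C_m$, and the connectedness of the piece over the first $\PP^1$ should be argued from the image of the monodromy representation of that component, not from ``a suitable choice of handle values.''
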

\begin{proof}
    Examining the proof of \cite[Lemma 4.3]{hongkumar:2024}, no information of $\Gamma$ is being used.
    Note that the ramification data $\gamma_i$'s and the expression $\gamma_1\cdots\gamma_t$ depend on the ordering of the marked points.
    The original construction works for any finite group in $\Gamma$.
\end{proof}

\subsection{Reduction of ramification data}
    By propagation of vacua \cite{hongkumar:2023}, we can drop all the marked points with trivial ramification data.
    Using \Cref{lem:analogue4.2} and factorization of coinvariants, we are reduced to compute the dimension of conformal blocks only when the base curve to genus $0$.
    We can apply \Cref{lem:analogue4.3} to the base $\PP^1$ for further reduction, and are reduced to compute the dimension of (twisted) conformal block in the following four base cases: 
    \begin{enumerate}
        \item $\PP^1$ with only 2-ramification points and at most two of them,
        \item $\PP^1$ with only 3-ramification points and at most three of them,
        \item $\PP^1$ with three marked points with ramification data (up to conjugacy) \[\vec{\gamma}=((12),(23),(132)),\] with connected cover of genus $0$ due to Riemann-Hurwitz, and
        \item $\PP^1$ with four marked points with ramification data (up to conjugacy) \[\vec{\gamma}=((12),(23),(123),(123)),\] with connected cover of genus $2$ due to Riemann--Hurwitz.
    \end{enumerate}
    These four cases are basic in the following sense:
    \begin{lemma}
        Given any $(\PP^1,\vec{p})$ with cover $(\Sigma,\vec{q})$ and ramification data $\vec{\gamma}$ (not all trivial), we can find a finite set of curves $\{C_i
    :=(\PP^1,\vec{p_i})\}_{i=1}^m$, with the ramification data $\vec{\gamma}_i$ for $C_i$ matching one of the four base cases listed above,
    such that $\coprod (C_i,\vec{p}_i)$ is the normalization of a degeneration of $\PP^1,\vec{p})$ and $\cup_{i=1}^m \vec{\gamma_i} = \vec{\gamma}$.

    Moreover, there can be at most one of the $C_i$ not belonging to case (1) and (2).
    \end{lemma}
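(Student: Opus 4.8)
Discarding the marked points carrying trivial ramification by propagation of vacua, we may assume every $\gamma_i$ is a nontrivial element of $\mfS_3$---a transposition or a $3$-cycle---and write $a$ for the number of transpositions among $\gamma_1,\dots,\gamma_s$; since $\prod_i\gamma_i=e$, the sign character $\mfS_3\to\{\pm1\}$ forces $a$ to be even. Throughout I would allow myself to act on the tuple $(\gamma_1,\dots,\gamma_s)$ by braid (Hurwitz) moves and cyclic rotations: the conclusion concerns only the multiset of ramification classes and the combinatorial shape of the degeneration, so replacing the given cover by one with the same ramification type (legitimate in view of \cite[Remark~1.4]{deshpande.mukhopadhyay:2019}) is harmless. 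The strategy is to put $(\gamma_1,\dots,\gamma_s)$ into a normal form that is a concatenation $\vec\gamma^{(1)}\cdots\vec\gamma^{(m)}$ in which each $\vec\gamma^{(k)}$ has product $e$ and matches one of the four base cases, with at most one of them of type (3) or (4), and then to split this concatenation into its pieces by iterating \Cref{lem:analogue4.3}.

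\textbf{The combinatorial normal form.} If $a=0$, all $\gamma_i$ lie in the abelian subgroup $A_3\cong\ZZ/3\ZZ$, braid moves act by arbitrary permutations of the entries, and $\prod_i\gamma_i=e$ says the number of $(123)$'s is congruent modulo $3$ to the number of $(132)$'s; an elementary counting argument then partitions this multiset into blocks $\{\sigma,\sigma^{-1}\}$ and $\{\sigma,\sigma,\sigma\}$, each a tuple of case (2), with no mixed block. If $a\geq2$, I would first peel off case-(1) blocks: whenever two of the transpositions coincide---which is automatic once $a\geq4$, there being only three transpositions in $\mfS_3$---braid moves bring that equal pair $(\tau,\tau)$ to two adjacent positions (a braid move transports an entry while only conjugating the entries it passes), and \Cref{lem:analogue4.3} peels it off. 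Iterating reduces to $a\in\{0,2\}$; if $a=2$ with the two transpositions still distinct, say $\tau_1\neq\tau_2$, I would bring $(\tau_1,\tau_2)$ to the front, so that $\gamma_3,\dots,\gamma_s$ are $3$-cycles (conjugation preserves $3$-cycles) with product $\rho:=(\tau_1\tau_2)^{-1}$. If some $\gamma_k$ equals $\rho$, reorder the $3$-cycles so $\gamma_3=\rho$ and peel off the case-(3) block $(\tau_1,\tau_2,\rho)$, conjugate to $((12),(23),(132))$; otherwise every $\gamma_k$ equals $\rho^{-1}$ and the number of them is $\equiv2\pmod3$, so one may peel off the case-(4) block $(\tau_1,\tau_2,\rho^{-1},\rho^{-1})$, conjugate to $((12),(23),(123),(123))$ since $(\rho^{-1})^2=\rho$. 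In either branch the leftover $3$-cycles have product $e$ and fall under the $a=0$ analysis, so at most one mixed block occurs in total.

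\textbf{Splitting off the pieces.} Given the concatenation $\vec\gamma^{(1)}\cdots\vec\gamma^{(m)}$, I would peel off $\vec\gamma^{(1)}$, then $\vec\gamma^{(2)}$ from the remainder, and so on, each step invoking \Cref{lem:analogue4.3} and re-applying propagation of vacua to discard the unramified nodal points created along the way. Its hypothesis $t,s-t\geq2$ always holds: every base-case tuple has at least two entries, while the complement has product $e$, hence is either empty---in which case the process has terminated and the remaining tuple already matches a base case---or has at least two entries, a single nontrivial element being unable to have product $e$. The outcome is a nodal tree of $\PP^1$'s, a degeneration of $(\PP^1,\vec p)$, whose normalization is $\coprod_k C_k$ with $C_k$ carrying the ramification $\vec\gamma^{(k)}$ together with one or two unramified nodal points; thus $\bigcup_k\vec\gamma^{(k)}=\vec\gamma$, each $C_k$ matches a base case, and at most one $C_k$ is of type (3) or (4).

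\textbf{Expected main obstacle.} The geometric input is entirely contained in \Cref{lem:analogue4.3}, so the crux is the combinatorial normal-form claim: keeping track of how braid moves permute the three transposition classes, carrying out the $\ZZ/3$-partition of the $3$-cycles, and making the case-(3)-versus-case-(4) dichotomy precise so that a single mixed block always suffices. A secondary subtlety is the correct invocation of \cite[Remark~1.4]{deshpande.mukhopadhyay:2019}: the degeneration one builds may a priori sit over a different cover with the same ramification type than the one we started from, and one must check this does not affect the intended downstream conclusion about dimensions of conformal blocks.
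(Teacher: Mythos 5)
Your proposal is correct and follows essentially the same route as the paper's proof: both use the sign character to force an even number of transpositions, the pigeonhole principle plus Hurwitz/braid moves of the form $\sigma\tau=\tau(\tau^{-1}\sigma\tau)$ to peel off pairs of equal transpositions via \Cref{lem:analogue4.3}, the $\ZZ/3\ZZ$ structure of $\mfA_3$ to partition the $3$-cycles into case-(2) blocks, and a final case split on how many $3$-cycles the residual transposition pair absorbs (zero, one, or two), yielding at most one block of type (3) or (4). Your write-up is somewhat more explicit than the paper's about verifying the hypothesis $t,s-t\geq 2$ of \Cref{lem:analogue4.3} and about the role of \cite[Remark 1.4]{deshpande.mukhopadhyay:2019}, but the underlying argument is the same.
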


    \begin{proof}
    Suppose we have $(\PP^1;\vec{p})$ with ramification data $\vec{\gamma}$.
    Note that $\pi_1(\PP^1 \setminus \vec{p})$ has presentation
    \[<\eta_1,...,\eta_s|\eta_1\cdots\eta_s =1>,\]
    where each $\eta_i$ is the small loop around $p_i$,
    so we have $\gamma_1\cdots\gamma_s = 1$.
    We start by ordering the marked point so that the ramification data $ \{\gamma_1,\cdots,\gamma_t\}$ contains all the 2-ramification data or transpositions, and $\{\gamma_{t+1},\cdots,\gamma_s\}$ contains all the 3-ramification data or 3-cycles.

    By computing the sign of $\gamma_1\cdots\gamma_s$, we get $t$ must be even. 
    When $t> 3$, by the Pigeonhole principle there must be two identical elements in $\gamma_i$'s.
    We can switch them both to the first two elements via the equality $\sigma \tau = \tau (\tau^{-1}\sigma\tau)$.
    We can then apply \Cref{lem:analogue4.3} to separate the first two marked points onto a single $\PP^1$.
    Thus, we have our first base case: $\mfS_3$-cover of $\PP^1$ with only 2-ramified points and at most two of them.

    Now we have reduced to the case $t=2$ by removing all pairs of transpositions.
    In this case, we have $\gamma_1\gamma_2\in \mfA_3\cong \ZZ/3\ZZ$.
    This means $(\gamma_1\gamma_2)\gamma_3\cdots \gamma_s$ is equivalent to the product of $s-1$ elements in $\mfA_3$.
    Since $\mfA_3\cong \ZZ/3\ZZ$ has order 3,
    we can split $\gamma_s$'s onto many $\PP^1$'s such that each $\PP^1$ has at most 3 marked point (formally identify $p_1,p_2$ as one marked point) with ramification data.
    This gives us our second base case: $\mfS_3$-cover of $\PP^1$ with only 3-ramified points and at most three of them.

    We are left with three cases: $\gamma_1\gamma_2=1,\gamma_1\gamma_2\gamma_3 = 1$ or $\gamma_1\gamma_2\gamma_3\gamma_4=1$ with $\gamma_1,\gamma_2$ 2-ramification data and $\gamma_3,\gamma_4$ 3-ramification data.
    When $\gamma_1\gamma_2 = 1$, this just ``decomposes'' into case (1) and case (2).
    Otherwise, up to conjugacy, we have
    \[\gamma_1\gamma_2\gamma_3 = 1 \quad \text{ which implies } \quad  \gamma_1 = (12),\gamma_2 = (23),\gamma_3 = (132),\]
    and that
    \[\gamma_1\gamma_2\gamma_3\gamma_4 = 1\quad \text{ which implies } \quad \gamma_1=(12),\gamma_2 = (23),\gamma_3 = (123),\gamma_4= (123).\]
    These give all the four base cases and how we ``decompose'' any         $(\PP^1,\vec{p})$ with ramification data $\vec{\gamma}$.
    \end{proof}

    \begin{rmk}
        Again, we have \cite[remark 1.4]{deshpande.mukhopadhyay:2019} and factorization of coinvariants to establish that
        \[\VVd (\mfS_3; \Lambda_o,...,\Lambda_o)_{(\Sigma,\vec{q})\xrightarrow{\vec{\gamma}}(\PP^1,\vec{p})}\supseteq \bigotimes \VVd(\mfS_3;\Lambda_o,...,\Lambda_o)_{(\Sigma_i,\vec{q}_i)\xrightarrow{\vec{\gamma}_i}C_i}.\]
        Thus, we only need to compute the dimension for the $\mfS_3$-cover of each base cases.
    \end{rmk}

    \subsection{Proof of Theorem \ref{appthm:Iwa}}  We conclude the appendix with the proof of \Cref{appthm:Iwa}. Since the generic splitting degree is 6, it follows that $\Gamma = \mfS_3$. From the reduction discussed in the previous section, we are reduced to compute the dimension of  twisted conformal blocks in four base cases. Using \Cref{thm:criterion}, as long as none of them is zero, then the theorem holds.
    
    For case (1), due to the lack of 3-ramification data, we know all the $\mfS_3$-covering are going to be a 2-to-1 covering $\bar{\Sigma}$ followed by the trivial (disconnected) 3-to-1 \'etale covering of $\bar{\Sigma}$.
    The treatment in {\sectDes} showed that $\VVd\neq 0$ in this case.
    Similarly for case (2), due to the lack of 2-ramification data, we know all the $\mfS_3$-covering is a trivial \'etale 2-to-1 covering $\bar{\Sigma}$ followed by a 3-to-1 covering of $\PP^1$ repeated on both component of the trivial \'etale 2-to-1 covering.
    The treatment in {\sectDes} showed that $\VVd\neq 0$ in this case as well.
    
    Finally, we compute cases (3) and (4) using the Verlinde formula \cite[(1.2)]{deshpande.mukhopadhyay:2019}. 
    For level $\ell=1$, we have 
    \[\{0\}\subseteq P_1(D_4)^{\mfS_3} \subseteq P_1(D_4)^{(123)}=\{0\},\] 
    so the trivial representation is the only representation fixed by the diagram automorphism group $\mfS_3$ of $D_4$, and only one summand appear in the Verlinde formula. We only need to find each factor of the products namely: $S_{0,0}$, $S^{(12)}_{0,0}$, and $S^{(123)}_{0,0}$; the discussion at the end of \cite[Section 9]{deshpande.mukhopadhyay:2019} allows us to identify $S^{(12)}$ with $S^{(23)}$.
    From \cite[Example B.5]{deshpande.mukhopadhyay:2019}, we deduce that  $S_{0,0}=\frac{1}{2}$.
    Using \cite[Example B.10]{deshpande.mukhopadhyay:2019}, we have 
    $(S^{(123)}_{0,0})^3 = 2\cdot S_{(0,0)}=1$. It is clear that the trivial representation $0$ is the unit in the relevant fusion ring, so \cite[Remark 10.4]{deshpande.mukhopadhyay:2019} tells us that $S^{(123)}_{0,0}$ is a positive real number; combining these two facts together we have $S^{(123)}_{0,0} =1$.
    From \cite[Appendix $B$]{deshpande.mukhopadhyay:2019} we also get $P_1(D_4^{(2)})^{(12)} = \{0,\omega_3\}$. From section 4 of \textit{ibid.}, we get $\tilde{S}^{(12)}_{0,0}=1$ and $\tilde{S}^{(12)}_{0,\omega_3}\in \{\pm 1\}$ is a second root of unity, and thus the categorical $\dim \mathcal{C}_1 = 2$ (see \cite[Theorem 4.20]{deshpande.mukhopadhyay:2019} or \cite{Drinfeld.Gelaki.Nikshych.Ostrik:2010} for details).
    This gives us $S^{(12)}_{0,0} = \frac{1}{\sqrt{2}}$ by \cite[Definition 4.21]{deshpande.mukhopadhyay:2019}.
    We now substitute all the data into the Verlinde formula. In case (3) we have
    \[\rank[\VV_{\Sigma_3\ra \PP^1;(12),(23),(132)}(\vec{\Lambda}_0)] = \frac{S^{(12)}_{0,0}S^{(23)}_{0,0}S^{(132)}_{0,0}}{S_{0,0}} = \frac{\frac{1}{\sqrt{2}}\cdot \frac{1}{\sqrt{2}}\cdot 1}{\frac{1}{2}} = 1;\]
   and in case (4) we have
    \[\rank[\VV_{\Sigma_4\ra \PP^1;(12),(23),(123),(123)}(\vec{\Lambda}_0)] = \frac{S^{(12)}_{0,0}S^{(23)}_{0,0}S^{(123)}_{0,0}S^{(123)}_{0,0}}{(S_{0,0})^2} = \frac{\frac{1}{\sqrt{2}}\cdot \frac{1}{\sqrt{2}}\cdot 1\cdot 1}{(\frac{1}{2})^2} = 2.\]
   Combining this with the strategy of proof of  {\Iwanum}, we have proved \ref{appthm:Iwa}. \hfill$\square$

\bibliographystyle{alpha}
\bibliography{biblio}

\begin{thebibliography}{DGNO10}

\bibitem[Bea96]{beauville:96:verlinde}
Arnaud Beauville.
\newblock Conformal blocks, fusion rules and the {V}erlinde formula.
\newblock In {\em Proceedings of the {H}irzebruch 65 {C}onference on {A}lgebraic {G}eometry ({R}amat {G}an, 1993)}, volume~9 of {\em Israel Math. Conf. Proc.}, pages 75--96. Bar-Ilan Univ., Ramat Gan, 1996.

\bibitem[BH25]{Besson-Hong:2025}
Marc Besson and Jiuzu Hong.
\newblock Smooth locus of twisted affine schubert varieties and twisted affine demazure modules.
\newblock {\em Forum of Mathematics, Sigma}, 13:e121, 2025.

\bibitem[BL94]{beauville.laszlo:1994:conformal}
Arnaud Beauville and Yves Laszlo.
\newblock Conformal blocks and generalized theta functions.
\newblock {\em Comm. Math. Phys.}, 164(2):385--419, 1994.

\bibitem[BS15]{balaji.seshadri:2015:moduli}
V.~Balaji and C.~S. Seshadri.
\newblock Moduli of parahoric {$\mathscr{G}$}-torsors on a compact {R}iemann surface.
\newblock {\em J. Algebraic Geom.}, 24(1):1--49, 2015.

\bibitem[BT84]{bruhat.tits:1984:II}
F.~Bruhat and J.~Tits.
\newblock Groupes r\'{e}ductifs sur un corps local. {II}. {S}ch\'{e}mas en groupes. {E}xistence d'une donn\'{e}e radicielle valu\'{e}e.
\newblock {\em Inst. Hautes \'{E}tudes Sci. Publ. Math.}, (60):197--376, 1984.

\bibitem[Dam20]{damiolini:2020:conformal}
Chiara Damiolini.
\newblock Conformal blocks attached to twisted groups.
\newblock {\em Math. Z.}, 295(3-4):1643--1681, 2020.

\bibitem[DGNO10]{Drinfeld.Gelaki.Nikshych.Ostrik:2010}
Vladimir Drinfeld, Shlomo Gelaki, Dmitri Nikshych, and Victor Ostrik.
\newblock On braided fusion categories i.
\newblock {\em Selecta Mathematica}, 16(1):1–119, April 2010.

\bibitem[DH23]{damiolini.hong:2023}
Chiara Damiolini and Jiuzu Hong.
\newblock Local types of {$(\Gamma,G)$}-bundles and parahoric group schemes.
\newblock {\em Proc. Lond. Math. Soc. (3)}, 127(2):261--294, 2023.

\bibitem[DM23]{deshpande.mukhopadhyay:2019}
Tanmay Deshpande and Swarnava Mukhopadhyay.
\newblock Crossed modular categories and the {V}erlinde formula for twisted conformal blocks.
\newblock {\em Camb. J. Math.}, 11(1):159--297, 2023.

\bibitem[Fal94]{faltings:verlinde}
Gerd Faltings.
\newblock A proof for the {V}erlinde formula.
\newblock {\em J. Algebraic Geom.}, 3(2):347--374, 1994.

\bibitem[Hei10]{heinloth:2010:uniformization}
Jochen Heinloth.
\newblock Uniformization of {$\mathscr{G}$}-bundles.
\newblock {\em Math. Ann.}, 347(3):499--528, 2010.

\bibitem[Hei17]{heinloth:2017:stability}
Jochen Heinloth.
\newblock Hilbert-{M}umford stability on algebraic stacks and applications to {$\mathcal{G}$}-bundles on curves.
\newblock {\em \'{E}pijournal G\'{e}om. Alg\'{e}brique}, 1:Art. 11, 37, 2017.

\bibitem[HK23]{hongkumar:2023}
Jiuzu Hong and Shrawan Kumar.
\newblock Conformal blocks for {G}alois covers of algebraic curves.
\newblock {\em Compos. Math.}, 159(10):2191--2259, 2023.

\bibitem[HK24]{hongkumar:2024}
Jiuzu Hong and Shrawan Kumar.
\newblock Twisted conformal blocks and their dimension.
\newblock {\em Math. Z.}, 306(4):Paper No. 76, 29, 2024.

\bibitem[HY24]{HongYu:2024:coherence}
Jiuzu Hong and Huanhuan Yu.
\newblock A refinement of the coherence conjecture of {P}appas and {R}apoport.
\newblock {\em arXiv:2412.15062}, 2024.

\bibitem[Kac90]{kac:1990:infinite}
Victor~G. Kac.
\newblock {\em Infinite-dimensional {L}ie algebras}.
\newblock Cambridge University Press, Cambridge, third edition, 1990.

\bibitem[LS97]{laszlo.sorger:1997}
Yves Laszlo and Christoph Sorger.
\newblock The line bundles on the moduli of parabolic {$G$}-bundles over curves and their sections.
\newblock {\em Ann. Sci. \'Ecole Norm. Sup. (4)}, 30(4):499--525, 1997.

\bibitem[PR08]{pappas.rapoport:2008:haines}
G.~Pappas and M.~Rapoport.
\newblock Twisted loop groups and their affine flag varieties.
\newblock {\em Adv. Math.}, 219(1):118--198, 2008.
\newblock With an appendix by T. Haines and Rapoport.

\bibitem[PR10]{pappas.rapoport:2010:questions}
Georgios Pappas and Michael Rapoport.
\newblock Some questions about {$\mathscr{G}$}-bundles on curves.
\newblock In {\em Algebraic and arithmetic structures of moduli spaces ({S}apporo 2007)}, volume~58 of {\em Adv. Stud. Pure Math.}, pages 159--171. Math. Soc. Japan, Tokyo, 2010.

\bibitem[PR24]{Pappas-Rapoport:2022}
Georgios Pappas and Michael Rapoport.
\newblock On tamely ramified $\mathcal{G}$-bundles on curves.
\newblock {\em Algebraic Geometry}, 11(6):796–829, 2024.

\bibitem[Sor99]{sorger:99:moduli}
Christoph Sorger.
\newblock On moduli of {$G$}-bundles of a curve for exceptional {$G$}.
\newblock {\em Ann. Sci. \'Ecole Norm. Sup. (4)}, 32(1):127--133, 1999.

\bibitem[TUY89]{tuy}
Akihiro Tsuchiya, Kenji Ueno, and Yasuhiko Yamada.
\newblock Conformal field theory on universal family of stable curves with gauge symmetries.
\newblock In {\em Integrable systems in quantum field theory and statistical mechanics}, volume~19 of {\em Adv. Stud. Pure Math.}, pages 459--566. Academic Press, Boston, MA, 1989.

\bibitem[Zel19]{zelaci:2019}
Hacen Zelaci.
\newblock Moduli spaces of anti-invariant vector bundles and twisted conformal blocks.
\newblock {\em Math. Res. Lett.}, 26(6):1849--1875, 2019.

\bibitem[Zhu14]{zhu:2014:coherence}
Xinwen Zhu.
\newblock On the coherence conjecture of {P}appas and {R}apoport.
\newblock {\em Ann. of Math. (2)}, 180(1):1--85, 2014.

\end{thebibliography}

\vfill
\end{document}